\urldef{\urluni}{\url}{http://www.mathematik.uni-kl.de/fuana/}
\urldef{\emailfattler}{\url}{fattler@mathematik.uni-kl.de}
\urldef{\emailgrothaus}{\url}{grothaus@mathematik.uni-kl.de}
\urldef{\emailvosshall}{\url}{vosshall@mathematik.uni-kl.de}
\def\stackunder#1#2{\mathrel{\mathop{#2}\limits_{#1}}}
\makeatletter\@addtoreset{equation}{section}\makeatother
\theoremstyle{plain}      \newtheorem{theorem}{Theorem}[section]
                          \newtheorem{corollary}[theorem]{Corollary}
                          \newtheorem{proposition}[theorem]{Proposition}
													\newtheorem{condition}[theorem]{Condition}
\theoremstyle{remark}     \newtheorem{remark}[theorem]{Remark}
                          \newtheorem{lemma}[theorem]{Lemma}
\theoremstyle{definition} \newtheorem{definition}[theorem]{Definition}
\begin{document} 

\newcommand{\grad}{\nabla}
\newcommand{\D}{\partial}
\newcommand{\E}{\mathcal{E}}
\newcommand{\N}{\mathbb{N}}
\newcommand{\R}{\mathbb{R}_{\scriptscriptstyle{\ge 0}}}
\newcommand{\dom}{\mathcal{D}}
\newcommand{\ess}{\operatorname{ess~inf}}
\newcommand{\cem}{\operatorname{\text{\ding{61}}}}
\newcommand{\supp}{\operatorname{\text{supp}}}
\newcommand{\ca}{\operatorname{\text{cap}}}

\begin{titlepage}
\title{\Large Construction and analysis of a sticky reflected distorted Brownian motion}
\author{\normalsize\sc Torben Fattler\footnote{University of Kaiserslautern, P.O.Box 3049, 67653
Kaiserslautern, Germany.}~\thanks{\emailfattler}~\footnotemark[5]\and \normalsize\sc Martin Grothaus\footnotemark[1]~\thanks{\emailgrothaus}~\footnotemark[5] 
 \and \normalsize\sc Robert Vo\ss hall\footnotemark[1]~\thanks{\emailvosshall}~\thanks{\urluni}}
\date{\small\today}
\end{titlepage}
\maketitle

\pagestyle{headings}

\begin{abstract}
\added{We give a Dirichlet form approach for the construction of a distorted Brownian motion in $E:=[0,\infty)^n$, $n\in\mathbb{N}$, where the behavior on the boundary is determined by the competing effects of reflection from and pinning at the boundary (sticky boundary behavior). In providing a Skorokhod decomposition of the constructed process we are able to justify that the stochastic process is solving the underlying stochastic differential equation weakly for quasi every starting point with respect to the associated Dirichlet form. That the boundary behavior of the constructed process indeed is sticky, we obtain by proving ergodicity of the constructed process. Therefore, we are able to show that the occupation time on specified parts of the boundary is positive. 
In particular, our considerations enable us to construct a dynamical wetting model (also known as Ginzburg--Landau dynamics) on a bounded set $D_{\scriptscriptstyle{N}}\subset \mathbb{Z}^d$ under mild assumptions on the underlying pair interaction potential in all dimensions $d\in\mathbb{N}$. In dimension $d=2$ this model describes the motion of an interface resulting from wetting of a solid surface by a fluid.}
\deleted{We give a Dirichlet form approach for the construction of a distorted Brownian motion in $E:=[0,\infty)^n$, $n\in\mathbb{N}$, where the behavior on the boundary is determined by the competing effects of reflection from and pinning at the boundary. The problem is formulated in an $L^2$-setting with underlying measure $\mu=\varrho\,m$. Here $\varrho$ is a positive density, integrable with respect to the measure $m$ and fulfilling the Hamza condition. The measure $m$ is such that the boundary $\partial E$ of $E$ is not of $m$-measure zero. A reference measure $\mu$ of this type is needed in order to give meaning to the so-called Wentzell boundary condition which is in literature typical for modeling such kind of sticky boundary behavior. In view of our application we construct a dynamics even on the boundary. In providing a Skorokhod decomposition of the constructed process we are able to justify that the stochastic process is solving the underlying stochastic differential equation weakly for quasi every starting point with respect to the associated Dirichlet form. That the boundary behavior of the constructed process indeed is sticky, we obtain by proving ergodicity of the constructed process. Therefore, we are able to show that the occupation time on specified parts of the boundary is positive. In order to obtain the Skorokhod decomposition we need $\varrho$ to be continuously differentiable on $E$, which is equivalent to continuity of the logarithmic derivative $\frac{\nabla\varrho}{\varrho}$ of $\varrho$. Furthermore, we assume that $\frac{\nabla\varrho}{\varrho}$ is square integrable with respect to $\mu$. We do not need that the logarithmic derivative of $\varrho$ is Lipschitz continuous.
In particular, our considerations enable us to construct a dynamical wetting model (also known as Ginzburg--Landau dynamics) on a bounded set $D_{\scriptscriptstyle{N}}\subset \mathbb{Z}^d$ under mild assumptions on the underlying pair interaction potentials in all dimensions $d\in\mathbb{N}$. In dimension $d=2$ this model describes the motion of an interface resulting from wetting of a solid surface by a fluid.}
\\\\
\thanks{\textbf{Mathematics Subject Classification 2010}. \textit{60K35, 60J50, 60J55, 82C41.}}\\
\thanks{\textbf{Keywords}: \textit{Interacting sticky reflected distorted Brownian motion, Skorokhod decomposition, Wentzell boundary condition, interface models.}}
\end{abstract}

\section{Introduction}
In \cite{EP12} the authors study stochastic differential equations (SDEs) with sticky boundary behavior and provide existence and uniqueness of solutions to the SDE system
\begin{align}\label{equEP12}
\left\{
\begin{array}{l}
dX_t=\frac{1}{2}d{\ell}_t^{0\added{+}}\big(X\big)+\mathbbm{1}_{(0,\infty)}\big(X_t\big)\,dB_t\\
\mathbbm{1}_{\{0\}}\added{\big(X_t\big)}\,dt=\frac{1}{\mu}\,d\ell_t^{0\added{+}}\big(X\big),
\end{array}
\right.
\end{align}
for reflecting Brownian motion $X$ in $[0,\infty)$ sticky at $0$, where $X:=\big(X_t\big)_{t\ge 0}$ starts at $x\in [0,\infty)$, $\mu\in (0,\infty)$ is a given constant, $\ell^{0\added{+}}\big(X\big)$ is the \added{right} local time of $X$ at $0$ and $B:=\big(B_t\big)_{t\ge 0}$ is the standard Brownian motion. In particular, H.-J.~Engelbert and G.~Peskir show that the system (\ref{equEP12}) has a jointly unique weak solution and moreover, they prove that the system (\ref{equEP12}) has no strong solution, thus verifying Skorokhod's conjecture of the non-existence of a strong solution in this case. For an outline of the historical evolution in the study of sticky Brownian motion we refer to the references given in \cite{EP12}.

In the present paper we construct a reflected distorted Brownian motion in $E:=[0,\infty)^n$, $n\in\mathbb{N}$, with sticky boundary behavior. First we use Dirichlet form techniques in order to construct solutions in the sense of the associated martingale problem for general Wentzell type boundary conditions. Then, by providing a Skorokhod decomposition for the constructed process, we can show that this process solves the stochastic differential equation 
\begin{multline}\label{repintro}
d\mathbf{X}_{t}^j=\mathbbm{1}_{\mathring{E}}\big(\mathbf{X}_t\big)\,\sqrt{2}\,dB^j_t+\partial_j\ln(\varrho)\big(\mathbf{X}_{\replaced{t}{\tau}}\big)\mathbbm{1}_{\mathring{E}}\big(\mathbf{X}_{\replaced{t}{\tau}}\big)\,dt\\
+\sum_{\varnothing\not=B\subsetneq I}\left\{\begin{array}{ll}
  \mathbbm{1}_{{E_+(B)}}\big(\mathbf{X}_t\big)\,\sqrt{2}\,dB^j_t+\partial_j\ln(\varrho)\big(\mathbf{X}_{\replaced{t}{\tau}}\big)\mathbbm{1}_{{E_+(B)}}\big(\mathbf{X}_{\replaced{t}{\tau}}\big)\,dt, & \text{if }j\in B\\
  \frac{1}{\replaced{\beta}{s}}\,\mathbbm{1}_{{E_+(B)}}\big(\mathbf{X}_{\replaced{t}{\tau}}\big)\,dt, & \text{if }j\in I\setminus B
  \end{array}\right.\\
+\frac{1}{\replaced{\beta}{s}}\,\mathbbm{1}_{\{(0,\ldots,0)\}}\big(\mathbf{X}_{\replaced{t}{\tau}}\big)\,dt\added{,\quad\text{for some }\beta>0,}
\end{multline}
weakly for quasi every starting point with respect to the underlying Dirichlet form. Here $j\in I:=\{1,\ldots,n\}$, \deleted{$B\subsetneq I$,} $E_+(B):=\big\{x\in E\,|\,x_i>0\text{ for all }i\in B\added{\text{ and }x_i=0\text{ for all }i\in I\setminus B}\big\}$ \added{for $B \subset I$ with $E_+(B) \subset \partial E$ for $B\subsetneq I$}, $(B^j_t)_{t\ge 0}$ are one dimensional independent standard Brownian motions, $j\in I$\replaced{.}{, and} $\varrho$ is a \replaced{continuously differentiable density on $E$ such that for all $B \subset I$, $\varrho$ is almost everywhere positive on $E_+(B)$ with respect to the Lebesgue measure and for all $\varnothing\not=B\subset I$,}{positive, continuously differentiable density on $E$ such that} $\sqrt{\varrho\added{\big|_{E_{+}(B)}}}$ is in the Sobolev space of weakly differentiable functions on \replaced{$E_{+}(B)$}{$\mathring{E}$}, square integrable together with \replaced{its}{their} derivative. $\varrho$ continuously differentiable on $E$ \replaced{implies that}{is equivalent to} the drift part $\big(\partial_j\ln(\varrho)\big)_{j\in I}$ \replaced{is}{being} continuous \added{on $\{ \varrho >0 \}$}. The stochastic differential equation (\ref{repintro}) can be rewritten as  
\begin{align}\label{repintro2}
d\mathbf{X}_{t}^j=\mathbbm{1}_{(0,\infty)}\big(\mathbf{X}^j_t\big)\,\Big(\sqrt{2}\,dB^j_t+\partial_j\ln(\varrho)\big(\mathbf{X}_t\big)\,dt\Big)+\frac{1}{\replaced{\beta}{s}}\,\mathbbm{1}_{\{0\}}\big(\mathbf{X}^j_t\big)\,dt,\quad j\in I,\added{\quad\text{for some }\beta>0,}
\end{align}
or equivalently
\begin{multline*}
d\mathbf{X}_{t}^j=\mathbbm{1}_{(0,\infty)}\big(\mathbf{X}^j_t\big)\,\Big(\sqrt{2}\,dB^j_t+\partial_j\ln(\varrho)\big(\mathbf{X}_t\big)\,dt\Big)+d\ell_{\replaced{t}{0}}^{\added{0,}j},\\
\text{with}\quad \ell_{\replaced{t}{0}}^{\added{0,}j}:=\frac{1}{\replaced{\beta}{s}}\int_0^t\mathbbm{1}_{\{0\}}\big(\mathbf{X}_{\replaced{s}{\tau}}^j\big)\,d\replaced{s}{\tau},\quad j\in I\added{,\quad\text{for some }\beta>0}.
\end{multline*}

\added{Note that a solution to (\ref{repintro2}) is a continuous semimartingale. By \cite[Chapter VI]{RY91} the right local time $\ell_t^{0+,j}$ of $\big(\mathbf{X}_{t}^j\big)_{t \geq 0}$, $j \in I$, is charaterized by}

\[ \big| \mathbf{X}_{t}^j \big| = \big| \mathbf{X}_{0}^j \big| + \int_0^t \text{sgn}(\mathbf{X}_{s}^j) \, d\mathbf{X}_{s}^j + \ell_t^{0+,j}, \]

\added{where $\text{sgn}$ is defined by $\text{sgn}(x)=1$ for $x >0$ and $\text{sgn}(x)=-1$ for $x \leq 0$. For a solution to (\ref{repintro2}) holds}

\begin{align*}
\big| \mathbf{X}_{t}^j \big| &= \big| \mathbf{X}_{0}^j \big| + \int_0^t \mathbbm{1}_{(0,\infty)}\big(\mathbf{X}^j_{s} \big) \, d\mathbf{X}^j_{s} + \frac{1}{\beta}\int_0^t\mathbbm{1}_{\{0\}}\big(\mathbf{X}_{s}^j\big)\,ds \\
&= \big| \mathbf{X}_{0}^j \big| + \int_0^t \text{sgn}\big(\mathbf{X}^j_{s} \big) \, d\mathbf{X}^j_{s} + \frac{2}{\beta}\int_0^t\mathbbm{1}_{\{0\}}\big(\mathbf{X}_{s}^j\big)\,ds,
\end{align*}

\added{since $\mathbf{X}_{t}^j \geq 0$ for all $t \geq 0$ and $\text{sgn}(0)=-1$. Hence,  $\ell_t^{0+,j}=\frac{2}{\beta}\int_0^t\mathbbm{1}_{\{0\}}\big(\mathbf{X}_{s}^j\big)\,ds = 2~ \ell^{0,j}_t$ almost surely. In other words, $\ell^{0,j}_t$ equals one half of the right local time $\ell_t^{0+,j}$. Furthermore, due to \cite[Corollary 1.9]{RY91} we can conclude that $\ell^{0,j}_t$ coincides with the central local time of $\big(\mathbf{X}_{t}^j\big)_{t \geq 0}$, i.e., it holds almost surely}

\begin{align*}
\ell^{0,j}_t=\frac{1}{\beta}\int_0^t\mathbbm{1}_{\{0\}}\big(\mathbf{X}_{s}^j\big)\,ds 
&= \frac{1}{2}~ \lim_{\varepsilon \downarrow 0} \frac{1}{\varepsilon} \int_0^t \mathbbm{1}_{[0,\varepsilon)} \big(\mathbf{X}_{s}^j\big)\, d\langle \mathbf{X}^j \rangle_{s} \\
&= \lim_{\varepsilon \downarrow 0} \frac{1}{2 \varepsilon} \int_0^t \mathbbm{1}_{(-\varepsilon,\varepsilon)} \big(\mathbf{X}_{s}^j\big)\, d\langle \mathbf{X}^j \rangle_{s}.
\end{align*}

Our considerations are motivated by the so-called $\nabla\phi$ interface model which provides a fundamental mathematical model for the physical description of interfaces from a microscopic or mesoscopic point of view. As an application of our results we are interested in the time development of such interfaces. In \cite{FuSpo97} the authors consider a scalar field $\boldsymbol{\phi}_t$, $t\ge 0$, where its motion is governed by a reversible stochastic dynamics\replaced{, i.e.,}{. I.e.} in a finite volume $\Lambda\subset \mathbb{Z}^d$, $d\in\mathbb{N}$, under suitable boundary conditions, the scalar field \added{$\boldsymbol{\phi}_t:=\big(\boldsymbol{\phi}_t(x)\big)_{x\in\Lambda}$, $t\ge 0$}, is described by the stochastic differential equations
\begin{align*}
d\boldsymbol{\phi}_t(x)=-\sum_{\stackunder{\scriptscriptstyle{|x-y|=1}}{\scriptscriptstyle{y\in\Lambda}}}V'(\boldsymbol{\phi}_t(x)-\boldsymbol{\phi}_t(y))dt+\sqrt{2}\,dB_t(x),\quad x\in\Lambda,\quad t\ge 0.
\end{align*}
Here $|\cdot|\deleted{_{\scriptscriptstyle{\text{euc}}}}$ denotes the norm induced by the euclidean scalar product on $\mathbb{R}^d$, $V\in C^2(\mathbb{R})$ is a symmetric, strictly convex potential and $\big\{(B_t(x))_{t\ge 0}\,|\,x\in\Lambda\big\}$ are independent standard Brownian motions. Such a dynamics is known as the \emph{Ginzburg-Landau $\nabla\phi$ interface model in finite volume}.
Of particular interest in the framework of $\nabla\phi$ interface models is the so-called \emph{entropic repulsion}. Though one considers  the $\nabla\phi$ interface model with reflection on a hard wall. This phenomenon was investigated e.g.~in \cite{DeuGia00} and \cite{BDG01} for the static $\nabla\phi$ interface model. Interface motion with entropic repulsion, i.e., the Ginzburg-Landau $\nabla\phi$ interface model with entropic repulsion was studied recently in \cite{DeuNis07} for dimension $d\ge 2$. Here the underlying potentials are again symmetric, strictly convex and nearest neighbor $C^2$-pair potentials. The Ginzburg-Landau dynamics with repulsion was introduced by T.~Funaki and S.~Olla in \cite{Fu03, FuOl01}. In \cite{Za04} this problem was tackled via Dirichlet form techniques in dimension $d=1$.

In considering the $\nabla\phi$ interface model with reflection on a hard wall and additionally putting a pinning effect on that wall, we are dealing with the so-called \emph{wetting model}. In dimension $d=2$ this model describes the wetting of a solid surface by a fluid. The static wetting model was studied recently in \cite{DGZ05}, see also \cite{CaVe00}. Considerations of the Ginzburg-Landau dynamics with reflection on a hard wall under the influence of an outer force, causing e.g.~a mild pinning effect on the wall can be found in \cite{Fu03}. 

In \cite[Sect.~15.1]{Fu05} J.-D. Deuschel and T. Funaki investigated the scalar field $\boldsymbol{\phi}_t:=\big(\boldsymbol{\phi}_t(x)\big)_{x\in\Lambda}$, $t\ge 0$, described by the stochastic differential equations
\begin{multline}\label{sde}
d\boldsymbol{\phi}_t(x)=-\mathbbm{1}_{(0,\infty)}\big(\boldsymbol{\phi}_t(x)\big)\sum_{\stackunder{\scriptscriptstyle{|x-y|=1}}{\scriptscriptstyle{y\in\Lambda}}}V'\big(\boldsymbol{\phi}_t(x)-\boldsymbol{\phi}_t(y)\big)\,dt\\
+\mathbbm{1}_{(0,\infty)}\big(\boldsymbol{\phi}_t(x)\big)\sqrt{2}dB_t(x)+d\ell_{t}^{\scriptscriptstyle{0}}(x),\quad x\in\Lambda,
\end{multline}
subject to the conditions:
\begin{align*}
&\boldsymbol{\phi}_t(x)\ge 0,\quad \ell_{t}^{\scriptscriptstyle{0}}(x)\mbox{ is non-decreasing with respect to }t,\quad \ell^{\scriptscriptstyle{0}}_{0}(x)=0,\\
&\int_0^\infty\boldsymbol{\phi}_t(x)\,d\ell_{t}^{\scriptscriptstyle{0}}(x)=0,\\
&\beta\ell_{t}^{\scriptscriptstyle{0}}(x)=\int_0^t\mathbbm{1}_{\{0\}}\big(\boldsymbol{\phi}_{\replaced{s}{\tau}}(x)\big)\,d\replaced{s}{\tau}\quad\mbox{for fixed }\beta> 0,\nonumber\\
\end{align*}
where $\ell_{t}^{\scriptscriptstyle{0}}(x)$ denotes the \emph{\added{central} local time} of $\boldsymbol{\phi}_t(x)\mbox{ at }0$ and the pair interaction potential $V$ is again symmetric, strictly convex and $C^2$.

For treating this system of stochastic differential equations the authors gave reference to classical solution techniques as developed e.g.~in \cite{WaIk89}. The methods provided therein require more restrictive assumptions on the drift part as in our situation \replaced{(instead of boundedness and Lipschitz continuity we only need continuity and a mild integrability condition, see Condition \ref{condweakdiff} and Remark \ref{remcondequi})}{(we only need that the drift part is continuous, not necessarily Lipschitz continuous)}, moreover, do not apply directly (the geometry differs). First steps in the direction of applying \cite{WaIk89} are discussed in \cite{Fu05} by J.-D. Deuschel and T. Funaki. 

\added{As far as we know the only reference that applies to the system of stochastic differential equations (\ref{repintro2}) is \cite{Gra88}. By means of a suitable choice of the coefficients the system of equations given by \cite[(II.1)]{Gra88} coincides with (\ref{repintro2}), but amongst others the drift part is also assumed to be Lipschitz continuous and boundend. For this reason, it is not possible to apply the results of \cite{Gra88} to the setting invenstigated by J.-D.~Deuschel and T.~Funaki, since the potential $V$ naturally causes an unbounded drift. Moreover, neither properties of the corresponding $L^2$-semigroup are worked out nor the invariant measure, Dirichlet form or generator are provided. Such tools are very useful for analyzing scaling limits of the considered system, see e.g.~\cite{GKLR01} and \cite{Za04}. These we plan to investigate in a follow-up article.}

\added{The theory of Dirichlet forms provides appropiate techniques in order to construct and analyze solutions to (\ref{sde}) for a large class of potentials.}
\replaced{Indeed,}{Hence} we obtain a weak solution to (\ref{sde}) \deleted{below} with sticky boundary behavior under rather mild assumption on the underlying probability density. Note that in view of the results provided in \cite{EP12}, this notion of solution is the only reasonable one. That the boundary behavior of the constructed weak solution to (\ref{sde}) indeed is sticky, we obtain by proving an ergodicity result (see Theorem \ref{spendtime} below). From this we can conclude, that the occupation time on the boundary of the constructed process increases asymptotically linear, whenever the process starts in a point with positive density $\varrho$ connected with the boundary, see Corollary \ref{corospendtime} below.

\deleted{As far as we know our considerations are the first with regard to construction and analysis of the dynamical wetting model in finite volume. Our approach is based on Dirichlet form techniques. Since we obtain a one to one correspondence to the 
ional analytic tool, as Dirichlet form and operator semigroup, we expect this method in future to be useful in view of studying scaling limits for the underlying system of stochastic differential equations as done in e.g.~\cite{Za04}.}

\added{A Skorokhod decomposition for reflected diffusions on bounded Lipschitz domains with singular non-reflection part was provided by G. Trutnau in \cite{Tru03}. Here we consider the case of the Wentzell type boundary condition.} 
Dirichlet form methods in the context of Wentzell boundary condition were introduced in e.g.~\cite{VoVo03}. Here, however, in view of our application we construct via the underlying bilinear form a dynamics even on the boundary. In \cite{VoVo03} a static boundary behavior is realized. An overview of the state of the art in the framework of interface models is presented in e.g.~\cite{Ga02}, \cite{Fu05}.

Our paper is organized as follows. In Section \ref{sectfuana} we provide the functional analytic\deleted{al} background to apply Dirichlet form methods in order to tackle the problem of sticky reflected distorted Brownian motion. We analyze the bilinear form (\ref{form1}) below and show in Theorem \ref{theosumdiri} and in the proof of Lemma \ref{lemcap} that $\big(\mathcal{E},D(\mathcal{E})\big)$ is a \added{recurrent, hence in particular} conservative, strongly local, strongly regular, symmetric Dirichlet form on the underlying $L^2$-space. In Section \ref{sectprocess} we present the probabilistic counterpart of Section \ref{sectfuana}. The main result of this section is obtained in Theorem \ref{theoprocess}, where we show that $\big(\mathcal{E},D(\mathcal{E})\big)$ has an associated conservative diffusion process $\mathbf{M}$, i.e., an associated strong Markov process with continuous sample paths and infinite life time. The diffusion process $\mathbf{M}$ is analyzed in Section \ref{sectanapro}. Here we provide in Corollary \ref{coroskoro} a Skorokhod decomposition of $\mathbf{M}$. This proves that $\mathbf{M}$ is a weak solution to (\ref{sde}). In Section \ref{sectergo} we show in Theorem \ref{spendtime} that the constructed process $\mathbf{M}$ is ergodic. Moreover, we present the consequences of the ergodicity result for the occupation time on the boundary of the constructed process, see Corollary \ref{corospendtime} below. Finally, we apply our results to the problem of the dynamical wetting model, see Theorems \ref{theosumdiriapp}, \ref{theoprocessapp}, \ref{theomartingaleapp} and Corollary \ref{coroskoroapp} below.

The following list of main results summarizes the progress achieved in this paper:
\begin{enumerate}
\item[(i)]
We construct conservative diffusion processes in $[0,\infty)^n$, $n\in\mathbb{N}$, with the competing effects of reflection and pinning at the boundary (sticky reflected distorted Brownian motion) under mild assumptions on the drift part, see Theorems \ref{theoprocess} and \ref{theomartingale} below.
\item[(ii)]
We provide a Skorokhod decomposition of the constructed processes and thereby prove that the processes solve the underlying stochastic differential equations weakly for \deleted{$\mathcal{\E}$-}quasi all starting points, see Corollary \ref{coroskoro} below. 
\item[(iii)]
We show ergodicity of the constructed processes, see Theorem \ref{spendtime} below. Using this ergodicity result, we illustrate the behavior of the processes at the boundary by studying the occupation times on specified parts of the boundary by the constructed processes, see Corollary \ref{corospendtime} below.
\item[(iv)]
Our general considerations apply to the construction of the dynamical wetting model in finite volume and all dimensions $d\in\mathbb{N}$ for a large class of pair interaction potentials, see Theorems \ref{theoprocessapp}, \ref{theomartingaleapp} and Corollary \ref{coroskoroapp} below.
\end{enumerate}

\section{The functional \replaced{analytic}{analytical} background}\label{sectfuana}
Let $n\in\mathbb{N}$, $I:=I_n:=\big\{1,\ldots,n\big\}$ and $E:=E_n:=[0,\infty)^n$. We have that $\mathring{E}=(0,\infty)^n$ and we denote by $\partial E$ the boundary of $E$. 
For each $x\added{=(x_1,\ldots,x_n)}\in{E}$ we set
\begin{align*}
I_{\scriptscriptstyle{0}}(x):=\big\{i\in I\,\big|\,x_i=0\big\}\quad\mbox{and}\quad I_{\scriptscriptstyle{+}}(x):=\big\{i\in I\,\big|\,x_i>0\big\},
\end{align*}
and define for $A,B\subset I$,
\begin{align*}
{E}_{\scriptscriptstyle{0}}(A):=\Big\{x\in E\,\Big|\,I_{\scriptscriptstyle{0}}(x)=A\Big\}\quad\mbox{and}\quad {E}_{\scriptscriptstyle{+}}(B):=\Big\{x\in E\,\Big|\,I_{\scriptscriptstyle{+}}(x)=B\Big\},
\end{align*}
respectively.
\begin{remark}
We have the decomposition
\begin{align*}
{E}=\dot{\bigcup}_{A\subset I}{E}_{\scriptscriptstyle{0}}(A)=\dot{\bigcup}_{B\subset I}{E}_{\scriptscriptstyle{+}}(B). 
\end{align*}
In particular,
\begin{align*}
\partial E={E}\setminus \mathring{E}=\dot{\bigcup}_{\varnothing\not=A\subset I}{E}_{\scriptscriptstyle{0}}(A)=\dot{\bigcup}_{B\subsetneq I}{E}_{\scriptscriptstyle{+}}(B).
\end{align*}
\end{remark}
On $\big({E},\mathcal{B}\replaced{(E)}{_{\scriptscriptstyle{{E}}}}\big)$ with $\mathcal{B}\replaced{(E)}{_{\scriptscriptstyle{{E}}}}$ being the trace $\sigma$-algebra of the Borel $\sigma$-algebra $\mathcal{B}(\mathbb{R}^n)$ on ${E}$ we define for fixed $\replaced{\beta}{s}\in(0,\infty)$ the measures 
\begin{align}\label{defmeasure}
m_{n,\replaced{\beta}{s}}:=\sum_{B\subset I}\lambda_{\scriptscriptstyle{B}}^{n,\replaced{\beta}{s}}\quad\text{with}\quad\lambda_{\scriptscriptstyle{B}}^{n,\replaced{\beta}{s}}:=\replaced{\beta}{s}^{n-\# B}\,\lambda_{\scriptscriptstyle{B}}^{\scriptscriptstyle{(n)}}\quad\text{and}\quad\lambda_{\scriptscriptstyle{B}}^{\scriptscriptstyle{(n)}}:=\prod_{i\in B}dx_{\scriptscriptstyle{+}}^i\prod_{j\in I\setminus B}d\delta^j_{\scriptscriptstyle{0}},
\end{align}
where $\#S$ \replaced{denotes}{gives} the number of elements in a set $S$, $dx^i_{\scriptscriptstyle{+}}$ is the Lebesgue measure on\\ $\big([0,\infty),\mathcal{B}\big([0,\infty)\big)\big)$ and $\delta_{\scriptscriptstyle{0}}^j$ denotes the Dirac measure on $\big([0,\infty),\mathcal{B}\big([0,\infty)\big)\big)$ at $0$. The indices $i,j\in I$ give reference to the component of $x=(x_1,\ldots,x_n)\in E$ being integrated by $dx^i_{\scriptscriptstyle{+}}$ and $\delta^j_{\scriptscriptstyle{0}}$, respectively. 

\begin{condition}\label{conddensity}
$\varrho$ is a $m_{n,\replaced{\beta}{s}}$-a.e. positive function on ${E}$ such that $\varrho\in L^{1}\big({E};m_{n,\replaced{\beta}{s}}\big)$.
\end{condition}

\begin{remark}\label{remprobdensity}
In particular, $\varrho$ can be chosen to be a probability density.
\end{remark}
Under Condition \ref{conddensity} we define on $\big(E,\mathcal{B}\replaced{(E)}{_{\scriptscriptstyle{E}}}\big)$ the measure $\mu_{n,\replaced{\beta}{s},\varrho}:=\varrho\,m_{n,\replaced{\beta}{s}}$ and hence, the space of square integrable functions on $E$ with respect to $\mu_{n,\replaced{\beta}{s},\varrho}$, denoted by $L^2\big(E;\mu_{n,\replaced{\beta}{s},\varrho}\big)$. 

\begin{remark}\label{remBaire}
Note that the measure $\mu_{n,\replaced{\beta}{s},\varrho}$ on $\big(E,\mathcal{B}\replaced{(E)}{_{\scriptscriptstyle{E}}}\big)$ is a \emph{Baire measure}. In our setting this means $\mu_{n,\replaced{\beta}{s},\varrho}$ is a Borel measure with the additional property that
\begin{align}\label{Baire}
\mu_{n,\replaced{\beta}{s},\varrho}(K)<\infty\quad\text{for all compact sets}\quad K\subset E.
\end{align}
(\ref{Baire}) is fulfilled, since $\varrho\in L^{1}\big(E;m_{n,\replaced{\beta}{s},\varrho}\big)$. Obviously, $E$ is locally compact and countable at infinity.
\end{remark}

We set
\begin{align*}
C^{{0}}_{{c}}\big(E\big):=\Big\{f:E\to\mathbb{R}\,\Big|\,f\mbox{ is continuous on }{E}
\mbox{ with }\supp(f)\subset E\mbox{ compact}\Big\},
\end{align*}
where $\supp$ denotes the support of the corresponding function and for $k\in\mathbb{N}$ we define
\begin{multline*}
C^{{k}}_{{c}}\big(E\big):=\Big\{f:E\to\mathbb{R}\,\Big|\,f\mbox{ is $k$-times continuously differentiable on }\mathring{E}\\
\mbox{ with }\supp(f)\subset E\mbox{ compact}\mbox{ and }\\
\partial^lf\mbox{ extends continuously to }E\mbox{ for $|l|\le k$}\Big\}.
\end{multline*}
Here and below $\partial^lf$ denotes the partial derivative of $f$ to the multi index $l\in\mathbb{N}^{{n}}_{\scriptscriptstyle{0}}$, i.e., 
\begin{multline*}
l=\big(l_1,\ldots,l_n\big)\in\mathbb{N}^{{n}}_{\scriptscriptstyle{0}},\quad |l|=l_1+\ldots+l_n,\\
\partial^lf:=\partial_1^{l_1}\partial_2^{l_2}\ldots\partial_n^{l_n}f,\quad\partial_i^{l_i}f:=\partial_{x_i}^{l_i}f,\quad \partial_{x_i}^0f:=f,\quad i\in I.
\end{multline*}
We write $\partial_i$ instead of $\partial^1_i$. Furthermore, $C_c^\infty({E}):=\bigcap_{k\in\mathbb{N}_{\scriptscriptstyle{0}}}C^k_c(E)$.

\begin{remark}\label{propdensedef}
Under Condition \ref{conddensity} we have that $C_c^\infty\big(E\big)$ is dense in $L^2\big(E;\mu_{n,\replaced{\beta}{s},\varrho}\big)$.
\end{remark}

\deleted{proof}

\subsection{Dirichlet forms}
Let $n\in\mathbb{N}$ be fixed and denote by $\replaced{\{}{(}e_1, \dots, e_n\replaced{\}}{)}$ the canonical basis of $\mathbb{R}^n$. For $\replaced{\beta}{s}\in(0,\infty)$ and $\varrho$ fulfilling Condition \ref{conddensity} we define on $L^2\big(E;\mu_{n,\replaced{\beta}{s},\varrho}\big)$ the bilinear form
\begin{align}\label{form1}
\mathcal{E}(f,g):=\mathcal{E}^{n,\replaced{\beta}{s},\varrho}\big(f,g\big):=\sum_{\varnothing\not=B\subset I}\mathcal{E}_{\scriptscriptstyle{B}}(f,g),\quad f,g\in \mathcal{D}:=C_c^2\big(E\big),
\end{align}
with 
\begin{align*}
\mathcal{E}_{\scriptscriptstyle{B}}(f,g):=\mathcal{E}_{\scriptscriptstyle{B}}^{n,\replaced{\beta}{s},\varrho}\big(f,g\big):=\int_{E_{\scriptscriptstyle{+}}(B)} \big( \nabla^B f\,, \nabla^B g \big) \,d\mu_{\scriptscriptstyle{B}}^{\varrho,n,\replaced{\beta}{s}},\quad\varnothing\not=B\subset I,
\end{align*}
where $\mu_{\scriptscriptstyle{B}}^{\varrho,n,\replaced{\beta}{s}}:=\varrho\,\lambda_{\scriptscriptstyle{B}}^{n,\replaced{\beta}{s}}$ (see (\ref{defmeasure}))\added{, $(\cdot,\cdot)$ denotes the euclidean inner product} and $\nabla^B f := \sum_{i \in B} \partial_i f \ e_i$ for $f \in \mathcal{D}$.

\begin{remark}\label{propdense}
Suppose that Condition \ref{conddensity} is satisfied. Then $\big(\mathcal{E},\mathcal{D}\big)$ is a symmetric, positive definite bilinear form which is densely defined on $L^2\big(E;\mu_{n,\replaced{\beta}{s},\varrho}\big)$.
\end{remark}
\deleted{proof}

To prove closability of the underlying bilinear form, we have to put an additional restriction on the density $\varrho$. For $\varnothing\not= B\subset I$ we define
\begin{align*}
R_\varrho\big(\overline{E_{\scriptscriptstyle{+}}(B)}\big):=\left\{x\in \overline{E_{\scriptscriptstyle{+}}(B)}\,\left|\,\int_{\overline{B_{\varepsilon}(x)}}\varrho^{-1}\,d\lambda_{\scriptscriptstyle{B}}^{n,\replaced{\beta}{s}}<\infty\quad\text{for some}\quad\varepsilon>0\right.\right\},
\end{align*}
where $B_{\varepsilon}(x):=\big\{y\in \overline{E_{\scriptscriptstyle{+}}(B)}\,\big|\,\vert x-y\vert<\varepsilon\big\}$ \deleted{and $|\cdot|_{\scriptscriptstyle{\text{euc}}}$ denotes the norm induced by the euclidean scalar product on $\mathbb{R}^n$} and for $\varnothing\not=B\subset I$, $\overline{E_{\scriptscriptstyle{+}}(B)}$ is the closure of ${E_{\scriptscriptstyle{+}}(B)}$ with respect to $\vert\cdot\vert$. 

\begin{condition}\label{condHamza}
For $\varnothing\not=B\subset I$ we have that $\varrho=0$~$\lambda_{\scriptscriptstyle{B}}^{n,\replaced{\beta}{s}}$-a.e.~on $\overline{E_{\scriptscriptstyle{+}}(B)}\setminus R_\varrho\big(\overline{E_{\scriptscriptstyle{+}}(B)}\big)$.
\end{condition}

\begin{lemma}\label{lemMaRo}
Let Condition \ref{condHamza} be satisfied. For $\varnothing\not=B\subset I$ let $\varphi\in C^\infty_c\Big(R_\varrho\big(\overline{E_{\scriptscriptstyle{+}}(B)}\big)\Big)$ and $f\in L^2\big(E,\mu_{\varrho,n,\replaced{\beta}{s}}\big)$.
\begin{enumerate}
\item[(i)]
There exists $C_1(\varphi,B)\in(0,\infty)$ such that
\begin{align*}
\Bigg|\int_{R_\varrho(\overline{E_{\scriptscriptstyle{+}}(B)})}f\varphi\,d\lambda_{\scriptscriptstyle{B}}^{n,\replaced{\beta}{s}}\Bigg|\le C_1(\varphi,B)\cdot\Vert f\Vert_{L^2(\overline{E_{\scriptscriptstyle{+}}(B)};\mu_{\scriptscriptstyle{B}}^{\varrho,n,\replaced{\beta}{s}})}.
\end{align*}
Here $L^2\big(\overline{E_{\scriptscriptstyle{+}}(B)};\mu_{\scriptscriptstyle{B}}^{\varrho,n,\replaced{\beta}{s}}\big)$ denotes the spaces of square integrable functions on $\overline{E_{\scriptscriptstyle{+}}(B)}$ with respect to $\mu_{\scriptscriptstyle{B}}^{\varrho,n,\replaced{\beta}{s}}$.
\item[(ii)]
There exists $C_2(\varphi,B)\in(0,\infty)$ such that
\begin{align*}
\Bigg|\int_{\partial R_\varrho(\overline{E_{\scriptscriptstyle{+}}(B)})}f\varphi\,d\sigma_{\scriptscriptstyle{B}}^{n,\replaced{\beta}{s}}\Bigg|\le C_2(\varphi,B)\cdot\Vert f\Vert_{L^2(E;\mu_{\varrho,n,\replaced{\beta}{s}})},
\end{align*}
where $\sigma_{\scriptscriptstyle{B}}^{n,\replaced{\beta}{s}}:=\sum_{\hat{B}\subsetneq B}\lambda_{\scriptscriptstyle{\hat{B}}}^{n,\replaced{\beta}{s}}$.
\end{enumerate}
\end{lemma}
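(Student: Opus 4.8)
The plan is to derive both inequalities from a single Cauchy--Schwarz estimate in which the weight $\varrho$ is split symmetrically, so that one factor reproduces the asserted $L^2$-norm of $f$ and the other is a finite constant depending only on $\varphi$ and $B$; this is the usual ``Hamza/Ma--Röckner'' closability estimate adapted to the present face decomposition, and the finiteness of the constant is exactly where the definition of $R_\varrho$ and Condition \ref{condHamza} enter. Throughout I would use that by Condition \ref{conddensity} we have $\varrho>0$ $\lambda_{\scriptscriptstyle B}^{n,\beta}$-a.e.\ (since $\lambda_{\scriptscriptstyle B}^{n,\beta}$ is one of the mutually singular summands of $m_{n,\beta}$), so that $\varrho^{-1}$ and the splitting $f\varphi=(f\sqrt{\varrho})\,(\varphi/\sqrt{\varrho})$ make sense a.e.

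For part (i), I would write
\[
\int_{R_\varrho(\overline{E_{\scriptscriptstyle +}(B)})} f\varphi\,d\lambda_{\scriptscriptstyle B}^{n,\beta}
=\int_{R_\varrho(\overline{E_{\scriptscriptstyle +}(B)})}\big(f\sqrt{\varrho}\big)\Big(\frac{\varphi}{\sqrt{\varrho}}\Big)\,d\lambda_{\scriptscriptstyle B}^{n,\beta}
\]
and apply Cauchy--Schwarz, bounding the result by $\big(\int_{R_\varrho} f^2\varrho\,d\lambda_{\scriptscriptstyle B}^{n,\beta}\big)^{1/2}\big(\int_{R_\varrho}\varphi^2\varrho^{-1}\,d\lambda_{\scriptscriptstyle B}^{n,\beta}\big)^{1/2}$. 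Since $R_\varrho(\overline{E_{\scriptscriptstyle +}(B)})\subset\overline{E_{\scriptscriptstyle +}(B)}$ and the integrand is nonnegative, the first factor is at most $\|f\|_{L^2(\overline{E_{\scriptscriptstyle +}(B)};\mu_{\scriptscriptstyle B}^{\varrho,n,\beta})}$, which is the right-hand side. I would then set $C_1(\varphi,B)$ equal to the second factor and show it is finite: as $\varphi$ is bounded with compact support $K:=\supp(\varphi)\subset R_\varrho(\overline{E_{\scriptscriptstyle +}(B)})$, every $x\in K$ admits a ball $\overline{B_{\varepsilon_x}(x)}$ on which $\varrho^{-1}$ is $\lambda_{\scriptscriptstyle B}^{n,\beta}$-integrable by the very definition of $R_\varrho(\overline{E_{\scriptscriptstyle +}(B)})$, and a finite subcover of $K$ yields $\int_K\varphi^2\varrho^{-1}\,d\lambda_{\scriptscriptstyle B}^{n,\beta}<\infty$.

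For part (ii), I would first decompose $\sigma_{\scriptscriptstyle B}^{n,\beta}=\sum_{\hat B\subsetneq B}\lambda_{\hat B}^{n,\beta}$ into its face pieces, each concentrated on the disjoint set $E_{\scriptscriptstyle +}(\hat B)$, so that the integral splits into the finite sum $\sum_{\hat B\subsetneq B}\int f\varphi\,d\lambda_{\hat B}^{n,\beta}$ over the faces. Applying the same Cauchy--Schwarz splitting to each summand gives a product of $\big(\int f^2\varrho\,d\lambda_{\hat B}^{n,\beta}\big)^{1/2}$ and $\big(\int\varphi^2\varrho^{-1}\,d\lambda_{\hat B}^{n,\beta}\big)^{1/2}$. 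The decisive structural observation is that $\mu_{n,\beta,\varrho}=\sum_{B'\subset I}\varrho\,\lambda_{B'}^{n,\beta}$ is a mutually singular sum over faces, whence $\int_{E_{\scriptscriptstyle +}(\hat B)} f^2\varrho\,d\lambda_{\hat B}^{n,\beta}\le\|f\|_{L^2(E;\mu_{n,\beta,\varrho})}^2$ for every $\hat B$; summing over the finitely many $\hat B\subsetneq B$ then yields the bound with the full norm $\|f\|_{L^2(E;\mu_{n,\beta,\varrho})}$ on the right, and with $C_2(\varphi,B)$ absorbing the $\varphi$-factors.

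The main obstacle I anticipate is precisely the finiteness of those $\varphi$-factors on the lower-dimensional faces, i.e.\ $\int\varphi^2\varrho^{-1}\,d\lambda_{\hat B}^{n,\beta}<\infty$. This is \emph{not} immediate from $\supp(\varphi)\subset R_\varrho(\overline{E_{\scriptscriptstyle +}(B)})$, because membership there only encodes local integrability of $\varrho^{-1}$ against the top-dimensional measure $\lambda_{\scriptscriptstyle B}^{n,\beta}$ on the open face, whereas here I need local integrability against the trace measures $\lambda_{\hat B}^{n,\beta}$, and the values of $\varrho$ on distinct faces are a priori unrelated for a merely $L^1(m_{n,\beta})$ density. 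I expect this to be supplied by reading Condition \ref{condHamza} on each boundary face $\overline{E_{\scriptscriptstyle +}(\hat B)}$ (so that $\varrho^{-1}$ is $\lambda_{\hat B}^{n,\beta}$-locally integrable off a $\lambda_{\hat B}^{n,\beta}$-null set, using $\varrho>0$ a.e.), together with the compactness of $\supp(\varphi)$ and a finite-subcover argument; verifying that the part of $\supp(\varphi)$ meeting $E_{\scriptscriptstyle +}(\hat B)$ lies in the good set $R_\varrho(\overline{E_{\scriptscriptstyle +}(\hat B)})$ of that face is the delicate step requiring care.
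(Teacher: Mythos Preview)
Your approach matches the paper's: for (i) you spell out the Cauchy--Schwarz/finite-cover argument that the paper merely cites from \cite[Chap.~2, Lemma~2.2]{MR92}, and for (ii) both you and the paper decompose $\sigma_{\scriptscriptstyle B}^{n,\beta}$ into its face pieces $\lambda_{\hat B}^{n,\beta}$, apply part (i) on each face $\hat B\subsetneq B$, and then dominate each resulting face norm $\|f\|_{L^2(\overline{E_{\scriptscriptstyle +}(\hat B)};\mu_{\hat B}^{\varrho,n,\beta})}$ by the full $\|f\|_{L^2(E;\mu_{\varrho,n,\beta})}$. The subtlety you flag---why part (i) is applicable with $\hat B$ in place of $B$ when one only knows $\supp(\varphi)\subset R_\varrho(\overline{E_{\scriptscriptstyle +}(B)})$---is not addressed in the paper's proof either; there the step is simply asserted as ``a multiple application of part (i)''.
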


\begin{proof}
\begin{enumerate}
\item[(i)]
See e.g.~\cite[Chap.~2,~Lemma 2.2]{MR92}.
\item[(ii)]
Let $\varnothing\not=B\subset I$, $\varphi\in C^\infty_c\Big(R_\varrho\big(\overline{E_{\scriptscriptstyle{+}}(B)}\big)\Big)$ and $f\in L^2\big(E,\mu_{\varrho,n,\replaced{\beta}{s}}\big)$. By a multiple application of part (i) we obtain
\begin{multline*}
\Bigg|\int_{\partial R_\varrho(\overline{E_{\scriptscriptstyle{+}}(B)})}f\varphi\,d\sigma_{\scriptscriptstyle{B}}^{n,\replaced{\beta}{s}}\Bigg|\le
\sum_{\hat{B}\subsetneq B}\Bigg|\int_{\partial R_\varrho(\overline{E_{\scriptscriptstyle{+}}(B)})}f\varphi\,d\lambda_{\scriptscriptstyle{\hat{B}}}^{n,\replaced{\beta}{s}}\Bigg|\\
\le\sum_{\hat{B}\subsetneq B}C_1(\varphi,\hat{B})\cdot\Vert f\Vert_{L^2(\overline{E_{\scriptscriptstyle{+}}(\hat{B})};\mu_{\scriptscriptstyle{\hat{B}}}^{\varrho,n,\replaced{\beta}{s}})}
\le C_2(\varphi,B)\cdot\Vert f\Vert_{L^2(E;\mu_{\varrho,n,\replaced{\beta}{s}})}.
\end{multline*}
\end{enumerate}
\end{proof}

\begin{proposition}\label{propclos}
Suppose that Conditions \ref{conddensity} and \ref{condHamza} are satisfied. Then $\big(\mathcal{E},\mathcal{D}\big)$ is closable on $L^2\big(E;\mu_{\varrho,n,\replaced{\beta}{s}}\big)$. Its closure we denote by $\big(\mathcal{E},D(\mathcal{E})\big)$.
\end{proposition}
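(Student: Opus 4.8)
I would prove closability by reducing it to the individual summands and then running the classical Hamza-type argument, adapted to the stratified geometry of $E$, with Lemma \ref{lemMaRo} supplying the two crucial estimates.

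First I would reduce to each $\mathcal{E}_{\scriptscriptstyle B}$. Recall that closability means: whenever $(u_k)\subset\mathcal{D}$ satisfies $u_k\to 0$ in $L^2\big(E;\mu_{\varrho,n,\beta}\big)$ and $(u_k)$ is $\mathcal{E}$-Cauchy, then $\mathcal{E}(u_k,u_k)\to 0$. Since every $\mathcal{E}_{\scriptscriptstyle B}$, $\varnothing\neq B\subset I$, is a nonnegative symmetric form and $\mathcal{E}=\sum_{\varnothing\neq B\subset I}\mathcal{E}_{\scriptscriptstyle B}$ is a \emph{finite} sum with the common domain $\mathcal{D}=C_c^2(E)$, an $\mathcal{E}$-Cauchy sequence is automatically $\mathcal{E}_{\scriptscriptstyle B}$-Cauchy for each $B$; hence it suffices to prove that each $\mathcal{E}_{\scriptscriptstyle B}$ is closable and then sum the finitely many conclusions. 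Moreover $\mathcal{E}_{\scriptscriptstyle B}$ depends only on the restriction of $u_k$ to $\overline{E_{\scriptscriptstyle+}(B)}$, and $L^2\big(E;\mu_{\varrho,n,\beta}\big)$-convergence forces convergence of these restrictions in $L^2\big(\overline{E_{\scriptscriptstyle+}(B)};\mu_{\scriptscriptstyle B}^{\varrho,n,\beta}\big)$, so the task reduces to closability of the weighted gradient form $\int|\nabla^B\,\cdot\,|^2\,d\mu_{\scriptscriptstyle B}^{\varrho,n,\beta}$ on the single stratum $E_{\scriptscriptstyle+}(B)\cong(0,\infty)^{\#B}$.

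Fix $\varnothing\neq B\subset I$ and let $(u_k)\subset\mathcal{D}$ with $u_k\to 0$ in $L^2\big(\overline{E_{\scriptscriptstyle+}(B)};\mu_{\scriptscriptstyle B}^{\varrho,n,\beta}\big)$ and $\mathcal{E}_{\scriptscriptstyle B}(u_k-u_l,u_k-u_l)\to 0$. The Cauchy condition gives that $\nabla^B u_k$ converges in $L^2\big(E_{\scriptscriptstyle+}(B);\mu_{\scriptscriptstyle B}^{\varrho,n,\beta}\big)^{\#B}$ to a limit $G=(G_i)_{i\in B}$, and it remains to show $G=0$ $\mu_{\scriptscriptstyle B}^{\varrho,n,\beta}$-a.e., for then $\mathcal{E}_{\scriptscriptstyle B}(u_k,u_k)=\|\nabla^B u_k\|^2\to\|G\|^2=0$. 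To identify $G$ I would test against $\varphi\in C^\infty_c\big(R_\varrho(\overline{E_{\scriptscriptstyle+}(B)})\big)$ and integrate by parts in a tangential direction $i\in B$:
\[
\int_{E_{\scriptscriptstyle+}(B)}(\partial_i u_k)\,\varphi\,d\lambda_{\scriptscriptstyle B}^{n,\beta}
=-\int_{E_{\scriptscriptstyle+}(B)} u_k\,\partial_i\varphi\,d\lambda_{\scriptscriptstyle B}^{n,\beta}
-\int_{E_{\scriptscriptstyle+}(B\setminus\{i\})} u_k\,\varphi\,d\lambda_{\scriptscriptstyle B\setminus\{i\}}^{n,\beta}.
\]
On the (compact) support of $\varphi$, contained in $R_\varrho(\overline{E_{\scriptscriptstyle+}(B)})$, the weight $\varrho^{-1}$ is $\lambda_{\scriptscriptstyle B}^{n,\beta}$-integrable (cover the support by finitely many of the balls from the definition of $R_\varrho$), so Cauchy--Schwarz yields $\int(\partial_i u_k)\varphi\,d\lambda_{\scriptscriptstyle B}^{n,\beta}\to\int G_i\,\varphi\,d\lambda_{\scriptscriptstyle B}^{n,\beta}$. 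On the right-hand side, the volume term tends to $0$ by Lemma \ref{lemMaRo}(i) applied with the test function $\partial_i\varphi\in C^\infty_c\big(R_\varrho(\overline{E_{\scriptscriptstyle+}(B)})\big)$, since $u_k\to 0$ in $L^2\big(\mu_{\scriptscriptstyle B}^{\varrho,n,\beta}\big)$; the boundary term, an integral against the face measure $\lambda_{\scriptscriptstyle B\setminus\{i\}}^{n,\beta}$ which is one of the summands of $\sigma_{\scriptscriptstyle B}^{n,\beta}=\sum_{\hat B\subsetneq B}\lambda_{\scriptscriptstyle\hat B}^{n,\beta}$, tends to $0$ by Lemma \ref{lemMaRo}(ii) (equivalently part (i) on the face $B\setminus\{i\}$), since $u_k\to0$ in $L^2\big(\mu_{\varrho,n,\beta}\big)$. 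Hence $\int G_i\,\varphi\,d\lambda_{\scriptscriptstyle B}^{n,\beta}=0$ for all such $\varphi$ and all $i\in B$; as $R_\varrho(\overline{E_{\scriptscriptstyle+}(B)})$ is relatively open, this forces $G_i=0$ $\lambda_{\scriptscriptstyle B}^{n,\beta}$-a.e.\ there, and Condition \ref{condHamza} ($\varrho=0$ off $R_\varrho(\overline{E_{\scriptscriptstyle+}(B)})$) upgrades this to $G=0$ $\mu_{\scriptscriptstyle B}^{\varrho,n,\beta}$-a.e. This establishes closability of $\mathcal{E}_{\scriptscriptstyle B}$, and by the reduction of the first paragraph, of $\big(\mathcal{E},\mathcal{D}\big)$.

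The weighted-gradient closability on a single open region is exactly the Hamza criterion, so the genuinely new point, and the main obstacle, is the boundary integral over the lower-dimensional stratum produced by the tangential integration by parts inside $E_{\scriptscriptstyle+}(B)$. Such a term is absent in the classical $\mathbb{R}^d$ setting; here it is unavoidable because the test functions $u_k\in C_c^2(E)$ need not vanish on $\partial E$, and controlling it uniformly in $k$ is precisely the purpose of Lemma \ref{lemMaRo}(ii) together with the surface measure $\sigma_{\scriptscriptstyle B}^{n,\beta}$.
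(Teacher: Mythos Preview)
Your proposal is correct and follows essentially the same route as the paper's proof: identify the $L^2(\mu_{\scriptscriptstyle B}^{\varrho,n,\beta})$-limit of the tangential derivatives, test against $\varphi\in C_c^\infty\big(R_\varrho(\overline{E_{\scriptscriptstyle+}(B)})\big)$, integrate by parts, and kill the volume term with Lemma~\ref{lemMaRo}(i) and the boundary term with Lemma~\ref{lemMaRo}(ii), then use Condition~\ref{condHamza} to pass from $\lambda_{\scriptscriptstyle B}^{n,\beta}$-a.e.\ to $\mu_{\scriptscriptstyle B}^{\varrho,n,\beta}$-a.e. The only structural difference is that you explicitly reduce to each $\mathcal{E}_{\scriptscriptstyle B}$ first, whereas the paper treats the full form but argues on each $B$ inside the proof; your displayed integration-by-parts formula is off by a harmless constant factor of $\beta$ in the boundary measure (the face measure is $\beta^{n-\#B}\lambda_{\scriptscriptstyle B\setminus\{i\}}^{(n)}$, not $\lambda_{\scriptscriptstyle B\setminus\{i\}}^{n,\beta}=\beta^{n-\#B+1}\lambda_{\scriptscriptstyle B\setminus\{i\}}^{(n)}$), but this does not affect the argument.
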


\begin{proof}
Let $(f_k)_{k\in\mathbb{N}}$ be a Cauchy sequence in $\mathcal{D}$ with respect to $\mathcal{E}$, i.e.,~$\mathcal{E}(f_k-f_l,f_k-f_l)\to 0$ as $k,l\to\infty$. Furthermore, we suppose that $f_k\to 0$ in $L^2\big(E;\mu_{\varrho,n,\replaced{\beta}{s}}\big)$ as $k\to\infty$, i.e.,
$\big(f_k,f_k\big)_{L^2(E;\mu_{\varrho,n,\replaced{\beta}{s}})}\to 0$ as $k\to\infty$. We have to check whether $\mathcal{E}(f_k,f_k)\to 0$ as $k\to\infty$. 
Let $\varnothing\not=B\subset I$. We know that for fixed $i\in B$, $\big(\partial_i f_k\big)_{k\in\mathbb{N}}$ converges to some $h_i$ in $L^2\big(\overline{E_{\scriptscriptstyle{+}}(B)};\mu_{\scriptscriptstyle{B}}^{\varrho,n,\replaced{\beta}{s}}\big)$, since $\big(\partial_i f_k\big)_{k\in\mathbb{N}}$ is a Cauchy sequence in $L^2\big(\overline{E_{\scriptscriptstyle{+}}(B)};\mu_{\scriptscriptstyle{B}}^{\varrho,n,\replaced{\beta}{s}}\big)$ and $\big(L^2\big(\overline{E_{\scriptscriptstyle{+}}(B)};\mu_{\scriptscriptstyle{B}}^{\varrho,n,\replaced{\beta}{s}}\big),\Vert\cdot\Vert_{L^2(\overline{E_{\scriptscriptstyle{+}}(B)};\mu_{\scriptscriptstyle{B}}^{\varrho,n,\replaced{\beta}{s}})}\big)$ is complete. Let $\varphi\in C_c^\infty\big(R_\varrho\big(\overline{E_{\scriptscriptstyle{+}}(B)}\big)\big)$. Using Lemma \ref{lemMaRo}(i) we obtain that    
\begin{multline*}
\Bigg|\int_{R_\varrho(\overline{E_{\scriptscriptstyle{+}}(B)})}h_i\,\varphi\,d\lambda_{\scriptscriptstyle{B}}^{n,\replaced{\beta}{s}}-\int_{R_\varrho(\overline{E_{\scriptscriptstyle{+}}(B)})}\partial_if_k\,\varphi\,d\lambda_{\scriptscriptstyle{B}}^{n,\replaced{\beta}{s}}\Bigg|\\
=\Bigg|\int_{R_\varrho(\overline{E_{\scriptscriptstyle{+}}(B)})}\Big(h_i-\partial_i f_k\Big)\,\varphi\,d\lambda_{\scriptscriptstyle{B}}^{n,\replaced{\beta}{s}}\Bigg|\le C_1(\varphi,B)\cdot\Vert h_i-\partial_i f_k\Vert_{L^2(\overline{E_{\scriptscriptstyle{+}}(B)};\mu_{\scriptscriptstyle{B}}^{\varrho,n,\replaced{\beta}{s}})}\to 0\quad\text{as}\quad k\to\infty.
\end{multline*}
This, together with an integration by parts, triangle inequality, Lemma \ref{lemMaRo}(ii) and the fact that $\big(f_k,f_k\big)_{L^2(E;\mu_{\varrho,n,\replaced{\beta}{s}})}\to 0$ as $k\to\infty$ implies:
\begin{multline*}
\left|\int_{R_\varrho(\overline{E_+(B)})}h_i\,\varphi\,d\lambda_{\scriptscriptstyle{B}}^{n,\replaced{\beta}{s}}\right|=\lim_{k\to\infty} \left|\int_{R_\varrho(\overline{E_+(B)})}\partial_if_k\,\varphi\,d\lambda_{\scriptscriptstyle{B}}^{n,\replaced{\beta}{s}}\right|\\
=\lim_{k\to\infty} \left|\int_{\partial(R_\varrho(\overline{E_+(B)}))}f_k\,\varphi\,d\sigma_{\scriptscriptstyle{B}}^{n,\replaced{\beta}{s}}-\int_{R_\varrho(\overline{E_+(B)})} f_k\,\partial_i\varphi\,d\lambda_{\scriptscriptstyle{B}}^{n,\replaced{\beta}{s}}\right|\\
\le\lim_{k\to\infty} \left|\int_{\partial(R_\varrho(\overline{E_+(B)}))}f_k\,\varphi\,d\sigma_{\scriptscriptstyle{B}}^{n,\replaced{\beta}{s}}\right|+\lim_{k\to\infty}\left|\int_{R_\varrho(\overline{E_+(B)})} f_k\,\partial_i\varphi\,d\lambda_{\scriptscriptstyle{B}}^{n,\replaced{\beta}{s}}\right|
=0\quad\text{as}\quad k\to\infty.
\end{multline*}
Thus $h_i=0$ in $L^2\big(R_\varrho(\overline{E_{\scriptscriptstyle{+}}(B)});\lambda_{\scriptscriptstyle{B}}^{n,\replaced{\beta}{s}}\big)$ and therefore $h_i=0$ in $L^2\big(\overline{E_{\scriptscriptstyle{+}}(B)};\varrho\,\lambda_{\scriptscriptstyle{B}}^{n,\replaced{\beta}{s}}\big)$ by Condition \ref{condHamza}. For all $\varnothing\not=B\subset I$ this yields $h_i=0$ in $L^2\big(\overline{E_{\scriptscriptstyle{+}}(B)};\mu_{\scriptscriptstyle{B}}^{\varrho,n,\replaced{\beta}{s}}\big)$ for all $i\in B$. Moreover,
\begin{multline*}
\mathcal{E}(f_k,f_k)=\sum_{\varnothing\not=B\subset I}\int_{E_{\scriptscriptstyle{+}}(B)}\big|\nabla^B f_k \big|^2\,d\mu_{\scriptscriptstyle{B}}^{\varrho,n,\replaced{\beta}{s}}\\
=\sum_{\varnothing\not=B\subset I}\sum_{i\in B}\big\Vert\partial_i f_k-h_i\big\Vert_{L^2(\overline{E_{\scriptscriptstyle{+}}(B)};\mu_{\scriptscriptstyle{B}}^{\varrho,n,\replaced{\beta}{s}})}^2\to 0\quad\text{as}\quad k\to\infty
\end{multline*}
and closability is shown. 
\end{proof}

\begin{remark}
Since $\big(\mathcal{E},\mathcal{D}\big)$ is closable on $L^2\big(E;\mu_{\varrho,n,\replaced{\beta}{s}}\big)$ by Proposition \ref{propclos} we have that $D(\mathcal{E})$ is complete with respect to the norm $\Vert\cdot\Vert_{\mathcal{E}_1}:={\mathcal{E}(\cdot,\cdot)}^{\frac{1}{2}}+{\big(\cdot,\cdot\big)^{\frac{1}{2}}_{L^2(E;\mu_{\varrho,n,\replaced{\beta}{s}})}}$.
\end{remark}

\begin{proposition}\label{propDirichlet}
Suppose that Conditions \ref{conddensity} and \ref{condHamza} are satisfied. Then $\big(\mathcal{E},D(\mathcal{E})\big)$ is a symmetric, regular\added{, strongly local and recurrent, hence in particular conservative,} Dirichlet form.
\end{proposition}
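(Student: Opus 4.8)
The symmetry of $\big(\mathcal{E},D(\mathcal{E})\big)$ is already recorded in Remark \ref{propdense} and is preserved under taking the closure, so it remains to establish the Markovian property, regularity, strong locality and recurrence (conservativeness then being a consequence of the latter). The guiding observation is that each summand $\mathcal{E}_{\scriptscriptstyle{B}}$ is a weighted gradient form on the face $E_{\scriptscriptstyle{+}}(B)$, so for $u\in\mathcal{D}=C_c^2(E)$ and $\phi\in C^2(\mathbb{R})$ the chain rule $\nabla^B(\phi\circ u)=(\phi'\circ u)\,\nabla^B u$ is available, and this single fact drives every part of the proof.

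For the Markovian property I would verify the approximate unit contraction property on the core and then pass to the closure. Given $u\in\mathcal{D}$ and $\varepsilon>0$, choose $\phi_\varepsilon\in C^2(\mathbb{R})$ with $\phi_\varepsilon(0)=0$, $0\le\phi_\varepsilon'\le 1$ and $\phi_\varepsilon(t)=t$ for $t\in[0,1]$, approximating $t\mapsto(0\vee t)\wedge 1$. Then $\phi_\varepsilon\circ u\in\mathcal{D}$, and by the chain rule $\big|\nabla^B(\phi_\varepsilon\circ u)\big|=|\phi_\varepsilon'\circ u|\,\big|\nabla^B u\big|\le\big|\nabla^B u\big|$ pointwise, whence $\mathcal{E}_{\scriptscriptstyle{B}}(\phi_\varepsilon\circ u,\phi_\varepsilon\circ u)\le\mathcal{E}_{\scriptscriptstyle{B}}(u,u)$ for every $B$, and summing gives $\mathcal{E}(\phi_\varepsilon\circ u,\phi_\varepsilon\circ u)\le\mathcal{E}(u,u)$. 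By the abstract criterion for closable forms (see e.g.~\cite{MR92}) the closure is then a Dirichlet form. Regularity is immediate from the construction: the core $\mathcal{D}=C_c^2(E)$ is by definition $\Vert\cdot\Vert_{\mathcal{E}_1}$-dense in $D(\mathcal{E})$, and it is uniformly dense in $C_c(E)$ by a standard approximation, so $\mathcal{D}$ is a regular core. Strong locality I would first check on $\mathcal{D}$: if $u,v\in\mathcal{D}$ have compact support and $v$ is constant on a neighborhood of $\supp(u)$, then $\nabla^B v=0$ there while $\nabla^B u$ vanishes off $\supp(u)$, so each integrand $(\nabla^B u,\nabla^B v)$ vanishes $\lambda_{\scriptscriptstyle{B}}^{n,\beta}$-a.e.\ and $\mathcal{E}(u,v)=0$; the property then extends to $D(\mathcal{E})$ by $\mathcal{E}_1$-approximation.

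The key point is recurrence, which I would obtain by an explicit approximating sequence of cutoffs exploiting $\varrho\in L^1\big(E;m_{n,\beta}\big)$. Fix $\chi\in C^\infty(\mathbb{R})$ with $0\le\chi\le 1$, $\chi\equiv 1$ on $(-\infty,1]$ and $\chi\equiv 0$ on $[2,\infty)$, and set $u_N(x):=\chi\big((x_1+\cdots+x_n)/N\big)$. Since $x_1+\cdots+x_n$ is linear, hence smooth and nonnegative on $E$, we have $u_N\in C_c^\infty(E)\subset\mathcal{D}$, $0\le u_N\le 1$ and $u_N\to 1$ pointwise on $E$. As $\partial_i u_N=N^{-1}\chi'\big((x_1+\cdots+x_n)/N\big)$ for each $i$, we get $\big|\nabla^B u_N\big|^2\le \#B\,N^{-2}\Vert\chi'\Vert_\infty^2\le N^{-1}\Vert\chi'\Vert_\infty^2$, so that
\begin{align*}
\mathcal{E}(u_N,u_N)=\sum_{\varnothing\not=B\subset I}\int_{E_{\scriptscriptstyle{+}}(B)}\big|\nabla^B u_N\big|^2\,d\mu_{\scriptscriptstyle{B}}^{\varrho,n,\beta}\le \frac{\Vert\chi'\Vert_\infty^2}{N}\,\Vert\varrho\Vert_{L^1(E;m_{n,\beta})}\longrightarrow 0.
\end{align*}
By the standard recurrence criterion for symmetric Dirichlet forms this places $1$ in the extended Dirichlet space with $\mathcal{E}(1,1)=0$, hence $\big(\mathcal{E},D(\mathcal{E})\big)$ is recurrent, and recurrence implies conservativeness.

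I expect the only genuine obstacle to be the interaction between the different faces in the Markovian and locality steps; however, since $\mathcal{E}$ is literally a sum over $B$ and each contraction (resp.\ locality configuration) acts identically on every face, all estimates pass through summand by summand, and the remaining work is the routine verification of the abstract criterion of \cite{MR92} together with the approximation arguments used to transfer properties from the core $\mathcal{D}$ to its closure.
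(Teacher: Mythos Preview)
Your argument is correct and follows essentially the same route as the paper: you verify the Markov property on the core via normal contractions, deduce regularity from density of $C_c^2(E)$ in both $D(\mathcal{E})$ and $C_c(E)$, check strong locality on $\mathcal{D}$ and invoke the standard reduction, and establish recurrence by exhibiting a cutoff sequence approximating $\mathbbm{1}_E$ with vanishing energy via the criterion in \cite[Theorem~1.6.3]{FOT94}. The only noticeable variation is in the cutoff construction for recurrence: the paper uses cube-based cutoffs $f_k$ with $f_k=1$ on $[0,k]^n$ and uniformly bounded derivatives, exploiting $\mu_{\varrho,n,\beta}(E\setminus[0,k]^n)\to 0$, whereas you use simplex-based cutoffs $u_N$ with derivatives of order $N^{-1}$ and the finiteness of $\|\varrho\|_{L^1(E;m_{n,\beta})}$; both work. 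Two minor points: your inequality $\#B\cdot N^{-2}\le N^{-1}$ holds only for $N\ge n$, so either restrict to such $N$ or simply keep the bound $n\,N^{-2}\|\chi'\|_\infty^2$, which already tends to zero; and your passage of strong locality from $\mathcal{D}$ to $D(\mathcal{E})$ ``by $\mathcal{E}_1$-approximation'' should be backed by the explicit reduction in \cite[Theorem~3.1.1 and Exercise~3.1.1]{FOT94}, since preserving the support/constancy configuration under approximation is not entirely automatic.
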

\deleted{next proposition with proof}
\begin{proof}
The Markov property is clear, see e.g. \replaced{\cite[Theo.~1.4.1]{FOT94}}{MR92}. Regularity can be shown as follows.
The extended Stone--Weierstra\ss~theorem, see e.g.~\cite[Chap.~7, Sect.~38]{Sim63}, yields that $C_c^\infty\big(E\big)$ is dense in $C_c^0\big(E\big)$ with respect to $\Vert\cdot\Vert_{\sup}$. Furthermore, $\mathcal{D}$ is dense in $D\big(\mathcal{E}\big)$ with respect to $\Vert\cdot\Vert_{\mathcal{E}_1}$. Since $C_c^\infty(E)\subset\mathcal{D}\subset D(\mathcal{E})\cap C^0_c(E)$, we obtain that $\big(\mathcal{E},D(\mathcal{E})\big)$ is regular.
Using \cite[Theo.~3.1.1]{FOT94} and \cite[Exercise~3.1.1]{FOT94} it is sufficient to show the strong local property for elements in $\mathcal{D}$. Therefore, let $f,g\in\mathcal{D}$ with $\supp(f)$, $\supp(g)$ compact and let $g$ be constant on some open (in the trace topology of $E$) neighborhood $U$ of $\supp(f)$. Then
\begin{multline*}
\mathcal{E}\big(f,g\big)=\sum_{\varnothing\not=B\subset I}\int_{E_{\scriptscriptstyle{+}}(B)} \big(\nabla^B f\,,\nabla^B g\big)\,d\mu_{\scriptscriptstyle{B}}^{\varrho,n,\replaced{\beta}{s}}\\
=\sum_{\varnothing\not=B\subset I}\int_{E_{\scriptscriptstyle{+}}(B)\cap\supp(f)}\big(\nabla^B f\,,\underbrace{\nabla^B g}_{=0}\big)\,d\mu_{\scriptscriptstyle{B}}^{\varrho,n,\replaced{\beta}{s}}+\sum_{\varnothing\not=B\subset I} \int_{E_{\scriptscriptstyle{+}}(B)\setminus\supp(f)}\big(\underbrace{\nabla^B f}_{=0}\,,\nabla^B g\big)\,d\mu_{\scriptscriptstyle{B}}^{\varrho,n,\replaced{\beta}{s}}=0.
\end{multline*} 
Hence $\big(\mathcal{E},D(\mathcal{E})\big)$ is strongly local.

\added{In order to deduce recurrence of $(\mathcal{E},D(\mathcal{E}))$, it is enough to show that there exists a sequence $(f_k)_{k \in \mathbb{N}} \subset D(\mathcal{E})$ such that $\lim_{k \rightarrow \infty} f_k =1$ $\mu_{\varrho,n,\replaced{\beta}{s}}$-a.e. and $\lim_{k \rightarrow \infty} \mathcal{E}(f_k,f_k)=0$ by \cite[Theorem 1.6.3]{FOT94}. This we do next.}
\deleted{Next we prove \replaced{conservativeness}{conservativity}.} $\mathbbm{1}_{E}\in L^2\big(E;\mu_{\varrho,n,\replaced{\beta}{s}}\big)$ by Condition \ref{conddensity}. We show that $\mathbbm{1}_{E}\in D(\mathcal{E})$. Set $\Lambda:=[-1,\infty)^n$ and $K_k:=[0,k]^n$, $k\in\mathbb{N}$. Then there exist cutoff functions $f_k\in C_c^\infty(\Lambda)$, $k\in\mathbb{N}$, such that $0\le f_k\le f_{k+1}\le 1$, $f_k=1$ on $K_k$, $\supp(f_k)\subset B_1(K_k)$ and $\big|\partial_i f_k\big|\le C_3<\infty$. $C_3$ independent of $k\in\mathbb{N}$. Here $B_1(K_k)$, $k\in\mathbb{N}$, denotes the $1$-neighborhood of the set $K_k$. Hence
\begin{multline}\label{conv1}
\Vert\mathbbm{1}_{E}-f_k\Vert_{L^2(E;\mu_{\varrho,n,\replaced{\beta}{s}})}^2=\int_{E}\big(\mathbbm{1}_{E}-f_k\big)^2\,d\mu_{\varrho,n,\replaced{\beta}{s}}\\
=\int_{E\setminus K_k}\big(\mathbbm{1}_{E}-f_k\big)^2\,d\mu_{\varrho,n,\replaced{\beta}{s}}\le \mu_{\varrho,n,\replaced{\beta}{s}}\big(E\setminus K_k\big)\to 0\quad\text{as}\quad k\to\infty.
\end{multline}
Furthermore,
\begin{multline}\label{conv2}
\mathcal{E}(f_k,f_k)=\sum_{\varnothing\not=B\subset I}\int_{E_{\scriptscriptstyle{+}}(B)}\big|\nabla^B f_k\big|^2\,d\mu_{\scriptscriptstyle{B}}^{\varrho,n,\replaced{\beta}{s}}
\le n\,C_3^2\,\mu_{\varrho,n,\replaced{\beta}{s}}\big(E\setminus K_k\big)\to 0\quad\text{as}\quad k\to\infty.
\end{multline} 
Using (\ref{conv1}) and (\ref{conv2}) we easily obtain by applying the Cauchy-Schwarz inequality that $(f_k)_{k\in\mathbb{N}}$ is $\mathcal{E}_1$-Cauchy. Hence $\mathbbm{1}_{E}\in D(\mathcal{E})$ with
\begin{align*}
\mathcal{E}(\mathbbm{1}_E,\mathbbm{1}_E)=\lim_{k\to\infty}\mathcal{E}(f_k,f_k)=0.
\end{align*}
\added{Therefore, $(\mathcal{E},D(\mathcal{E}))$ is recurrent and, hence in particular, conservative}.
\end{proof}




Finally, we end up with the following result.

\begin{theorem}\label{theosumdiri}
For fixed $n\in\mathbb{N}$, $\replaced{\beta}{s}\in(0,\infty)$ and density function $\varrho$ we have that under Conditions \ref{conddensity} and \ref{condHamza} 
\begin{align*}
\mathcal{E}(f,g)=\sum_{\varnothing\not=B\subset I}\mathcal{E}_{\scriptscriptstyle{B}}(f,g),\quad f,g\in \mathcal{D}=C_c^2\big(E\big),
\end{align*}
with 
\begin{align*}
\mathcal{E}_{\scriptscriptstyle{B}}(f,g)=\int_{E_{\scriptscriptstyle{+}}(B)}\big(\nabla^B f\,,\nabla^B g\big)\,d\mu_{\scriptscriptstyle{B}}^{\varrho,n,\replaced{\beta}{s}},\quad\varnothing\not=B\subset I,
\end{align*}
and $\mu_{\scriptscriptstyle{B}}^{\varrho,n,\replaced{\beta}{s}}=\varrho\,\lambda_{\scriptscriptstyle{B}}^{n,\replaced{\beta}{s}}$,
is a densely defined, positive definite, symmetric bilinear form, which is closable on $L^2\big(E;\mu_{\varrho,n,\replaced{\beta}{s}}\big)$. Its closure $\big(\mathcal{E},D(\mathcal{E})\big)$ is a \added{recurrent, hence in particular conservative}, strongly local, regular, symmetric Dirichlet form on $L^2\big(E;\mu_{\varrho,n,\replaced{\beta}{s}}\big)$.
\end{theorem}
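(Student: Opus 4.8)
The plan is to assemble results already established in this section, since the statement is essentially a summary of Remark \ref{propdense}, Proposition \ref{propclos}, and Proposition \ref{propDirichlet}, now collected under the two standing hypotheses. First I would invoke Remark \ref{propdense}: under Condition \ref{conddensity} alone, the bilinear form $\big(\mathcal{E},\mathcal{D}\big)$ with $\mathcal{D}=C_c^2(E)$ is symmetric, positive definite, and densely defined on $L^2\big(E;\mu_{\varrho,n,\beta}\big)$. The explicit expression for $\mathcal{E}_{\scriptscriptstyle{B}}$ restated in the theorem is exactly the definition (\ref{form1}), so no reconciliation is needed there.

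Next, adding Condition \ref{condHamza}, Proposition \ref{propclos} yields that $\big(\mathcal{E},\mathcal{D}\big)$ is closable on $L^2\big(E;\mu_{\varrho,n,\beta}\big)$, so that the closure $\big(\mathcal{E},D(\mathcal{E})\big)$ is well defined and complete with respect to $\Vert\cdot\Vert_{\mathcal{E}_1}$. Finally, Proposition \ref{propDirichlet} identifies this closure as a symmetric, regular, strongly local, recurrent (hence conservative) Dirichlet form on the same space. Concatenating these three citations establishes every clause of the theorem verbatim.

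Since all constituent statements are already proven, I expect no substantive obstacle; the only care required is bookkeeping, namely checking that the hypotheses match across the three cited results (Remark \ref{propdense} needs only Condition \ref{conddensity}, while Propositions \ref{propclos} and \ref{propDirichlet} require both Conditions \ref{conddensity} and \ref{condHamza}, which are precisely the hypotheses of the theorem). The genuinely hard content has already been discharged upstream: in Proposition \ref{propclos}, where the Hamza-type Condition \ref{condHamza} combined with the boundary trace estimates of Lemma \ref{lemMaRo} rules out nontrivial limits $h_i$ of the derivatives $\partial_i f_k$ via integration by parts; and in Proposition \ref{propDirichlet}, where recurrence is obtained by exhibiting the cutoff sequence $(f_k)_{k\in\mathbb{N}}$ approximating $\mathbbm{1}_E\in D(\mathcal{E})$ with $\mathcal{E}(\mathbbm{1}_E,\mathbbm{1}_E)=0$, invoking \cite[Theorem 1.6.3]{FOT94}. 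Thus the present theorem requires no new estimates beyond a faithful restatement.
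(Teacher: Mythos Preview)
Your proposal is correct and matches the paper's own proof, which consists of the single line ``See Remark \ref{propdense} and Propositions \ref{propclos} and \ref{propDirichlet}.'' Your additional bookkeeping about which conditions are needed where is accurate and slightly more explicit than the original.
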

\begin{proof}
See Remark \ref{propdense} and Propositions \ref{propclos} and \ref{propDirichlet}.
\end{proof}

\subsection{Generators}
By Friedrichs representation theorem we have the existence of the self-adjoint generator\\
$\big(H,D(H)\big)$ corresponding to $\big(\mathcal{E},D(\mathcal{E})\big)$.
\begin{proposition}\label{propgen}
Suppose that Conditions \ref{conddensity} and \ref{condHamza} are satisfied. There exists a unique, positive, self-adjoint, linear operator $\big(H,D(H)\big)$ on $L^2\big(E;\mu_{\varrho,n,\replaced{\beta}{s}}\big)$ such that
\begin{align*}
D(H)\subset D(\mathcal{E})\quad\text{and}\quad\mathcal{E}\big(f,g\big)=\Big(H f,g\Big)_{L^2(E;\mu_{\varrho,n,\replaced{\beta}{s}})}
\quad\text{for all }f\in D(H),~g\in D(\mathcal{E}).
\end{align*}
\end{proposition}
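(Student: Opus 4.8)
The plan is to invoke the standard one-to-one correspondence between closed, densely defined, symmetric, non-negative bilinear forms and non-negative self-adjoint operators on a Hilbert space (the Friedrichs representation theorem; see e.g.~\cite{MR92} or \cite{FOT94}). In fact, all of its hypotheses have already been verified in the preceding development: by Remark \ref{propdense} the form $\big(\mathcal{E},\mathcal{D}\big)$ is symmetric, positive definite and densely defined on $L^2\big(E;\mu_{\varrho,n,\beta}\big)$, and by Proposition \ref{propclos} it is closable, so that its closure $\big(\mathcal{E},D(\mathcal{E})\big)$ supplied by Theorem \ref{theosumdiri} is a closed, densely defined, symmetric, non-negative form. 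Thus the substantive analytic work is already done, and what remains is the purely abstract construction of $H$ from the form.

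Concretely, I would argue as follows. Since $\big(\mathcal{E},D(\mathcal{E})\big)$ is closed, the space $D(\mathcal{E})$ equipped with $\mathcal{E}_1(f,g):=\mathcal{E}(f,g)+(f,g)_{L^2(E;\mu_{\varrho,n,\beta})}$ is a Hilbert space. For fixed $g\in L^2\big(E;\mu_{\varrho,n,\beta}\big)$ the functional $f\mapsto (f,g)_{L^2(E;\mu_{\varrho,n,\beta})}$ is $\mathcal{E}_1$-continuous on $D(\mathcal{E})$, because $\big|(f,g)_{L^2}\big|\le \Vert f\Vert_{L^2}\Vert g\Vert_{L^2}\le \mathcal{E}_1(f,f)^{\frac{1}{2}}\Vert g\Vert_{L^2}$ by the Cauchy--Schwarz inequality and positivity of $\mathcal{E}$. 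The Riesz representation theorem on $\big(D(\mathcal{E}),\mathcal{E}_1\big)$ then yields a unique $Gg\in D(\mathcal{E})$ with $\mathcal{E}_1(Gg,f)=(g,f)_{L^2}$ for all $f\in D(\mathcal{E})$; the resulting map $G$ is a bounded, symmetric, injective, non-negative operator on $L^2\big(E;\mu_{\varrho,n,\beta}\big)$ with dense range.

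Next I would define $D(H):=G\big(L^2(E;\mu_{\varrho,n,\beta})\big)$ and $H:=G^{-1}-\mathrm{Id}$. As $G$ is bounded, self-adjoint and injective with dense range, $G^{-1}$ is self-adjoint, hence so is $H$, and positivity of $H$ follows from non-negativity of $G$. By construction $D(H)\subset D(\mathcal{E})$, and for $f=Gh\in D(H)$ and any $g\in D(\mathcal{E})$ one verifies $\mathcal{E}(f,g)=\mathcal{E}_1(f,g)-(f,g)_{L^2}=(h,g)_{L^2}-(Gh,g)_{L^2}=\big((G^{-1}-\mathrm{Id})f,g\big)_{L^2}=(Hf,g)_{L^2}$, which is the asserted identity; uniqueness is immediate, since any self-adjoint operator representing the form must agree with $H$ on $D(H)$ and the representation identity together with self-adjointness forces the domains to coincide. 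I do not expect a genuine obstacle at this stage: the only nontrivial inputs are closedness (obtained through the closability argument of Proposition \ref{propclos}) and dense definiteness (Remark \ref{propdense}), both of which are already secured, so the proposition reduces to a direct citation of the representation theorem.
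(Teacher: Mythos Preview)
Your proposal is correct and follows essentially the same approach as the paper: both reduce the proposition to the standard correspondence between closed, non-negative symmetric forms and non-negative self-adjoint operators, relying on Remark~\ref{propdense} and Proposition~\ref{propclos} for the hypotheses. The paper simply cites \cite[Coro.~1.3.1]{FOT94} directly, whereas you additionally sketch the resolvent construction behind that result; this extra detail is harmless but not needed.
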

\begin{proof}
Using Proposition \ref{propclos} this is a direct application of \cite[Coro.~1.3.1]{FOT94}.
\end{proof}

We need additional assumptions on the density function $\varrho$ in order to derive an explicit formula for the generator $H$ on a subset of its domain $D(H)$, dense in $L^2\big(E;\mu_{\varrho,n,\replaced{\beta}{s}}\big)$. 

\begin{condition}\label{condweakdiff}
\added{$\varrho$ is a $m_{n,\replaced{\beta}{s}}$-a.e. positive function on ${E}$ such that}
\begin{enumerate}
\item[(i)]
$\sqrt{\varrho\added{\big|_{E_+(B)}}}\in H^{1,2}(\replaced{E_+(B)}{\mathring{E}})$ \added{for all $\varnothing \not= B\subset I$}, where $H^{1,2}(\replaced{E_+(B)}{\mathring{E}})$ denotes the Sobolev space of weakly differentiable functions on $\replaced{E_+(B)}{\mathring{E}}$, square integrable together with their derivative.
\item[(ii)]
$\varrho\in C^1(E)$, where $C^1(E)$ denotes the space of continuously differentiable functions on $E$.
\end{enumerate}
\end{condition}

\begin{remark}\label{remcondequi}
$ $
\begin{enumerate}
\item[(i)]
Note that the additional assumptions collected in Condition \ref{condweakdiff} are not necessary for the existence of the generator $\big(H,D(H)\big)$.
\item[(ii)]
If $\varrho$ fulfills Condition \ref{conddensity} then Condition \ref{condweakdiff}(i) is equivalent to $\big(\partial_i\ln(\varrho)\big)_{i=1}^n$\\
$\in L^2(E;\mu^{\varrho,n,\replaced{\beta}{s}})$.
\item[(iii)]
Condition \ref{condweakdiff}(ii) \replaced{implies that}{is equivalent to} $\big(\partial_i\ln(\varrho)\big)_{i=1}^n$ \replaced{is}{being} continuous \added{on the set $\{ \varrho >0\}$}.
\item[(iv)] \added{If $\varrho$ fulfills Condition \ref{condweakdiff} (ii), $\varrho$ is in particular continuous on $E$ and therefore Condition \ref{condHamza} is implied. Moreover, Condition \ref{condweakdiff} (i) implies Condition \ref{conddensity}.}
\end{enumerate}
\end{remark}

For $f\in \mathcal{D}=C_c^2(E)$ and $B \subset I$ we define
\begin{multline*}
L^Bf:={L}^{n,\varrho,B} f:=\sum_{i \in B} \big( \partial^2_i f+ \partial_i f\,\partial_i (\ln\varrho) \big)+ \sum_{i \in I \backslash B} \frac{1}{\replaced{\beta}{s}} \partial_i f\\
 = \Delta^B f + \big( \nabla^B  f, \nabla^B \ln \varrho \big) + \frac{1}{\replaced{\beta}{s}} (\nabla^{I \backslash B} f,e),
\end{multline*}
and
\begin{align*} Lf := \sum_{B \subset I} \mathbbm{1}_{E_+(B)} L^B f,
\end{align*}
where $\Delta^B f:=\sum_{i \in B} \partial_i^2 f$ for $f \in \mathcal{D}$, $B \subset I$ and $e$ is a vector of length $n$ containing only ones.

\begin{proposition}\label{propibp}
Suppose that Condition\deleted{s} \deleted{\ref{conddensity}, \ref{condHamza} and} \ref{condweakdiff} \replaced{is}{are} satisfied. For functions $f, g\in \mathcal{D}$
we have the representation $\mathcal{E}\big(f,g\big)=\Big(-Lf,g\Big)_{\scriptscriptstyle{L^2(E;\mu_{\varrho,n,\replaced{\beta}{s}})}}$.
\end{proposition}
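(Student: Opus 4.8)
The plan is to prove the identity stratum by stratum: integrate by parts on each face $E_{+}(B)$ separately, and then check that the boundary contributions telescope across the decomposition $E=\dot{\bigcup}_{B\subset I}E_{+}(B)$, with the transverse drift coefficients $1/\beta$ matching the weights of the measures on adjacent faces.

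First I would rewrite the right-hand side as a sum over faces. By Remark \ref{remcondequi}(iv), Condition \ref{condweakdiff} implies Conditions \ref{conddensity} and \ref{condHamza}, so $\big(\mathcal{E},D(\mathcal{E})\big)$ and $\mu_{\varrho,n,\beta}=\varrho\,m_{n,\beta}=\varrho\sum_{B\subset I}\lambda_{B}^{n,\beta}$ are at our disposal, and $Lf\in L^{2}(E;\mu_{\varrho,n,\beta})$ because $f\in C_{c}^{2}(E)$ and $\big(\partial_{i}\ln\varrho\big)_{i=1}^{n}\in L^{2}(E;\mu_{\varrho,n,\beta})$ by Remark \ref{remcondequi}(ii). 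Each $\lambda_{B}^{n,\beta}$ is concentrated on $E_{+}(B)$ (the Dirac factors force $x_{j}=0$ for $j\in I\setminus B$, while the Lebesgue factors assign no mass to $\{x_{i}=0\}$, $i\in B$), so $\mathbbm{1}_{E_{+}(B')}=1$ if $B'=B$ and $0$ otherwise, $\lambda_{B}^{n,\beta}$-a.e. Hence
\begin{align*}
\big(-Lf,g\big)_{L^{2}(E;\mu_{\varrho,n,\beta})}=-\sum_{B\subset I}\int_{E_{+}(B)}L^{B}f\cdot g\,d\mu_{B}^{\varrho,n,\beta}.
\end{align*}

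Next, fix $\varnothing\neq B\subset I$. On $E_{+}(B)$ the measure $\lambda_{B}^{n,\beta}$ is $\beta^{n-\#B}$ times $\#B$-dimensional Lebesgue measure in the coordinates indexed by $B$, the remaining coordinates being set to $0$. Since $f,g\in C_{c}^{2}(E)$ and $\varrho\in C^{1}(E)$, for each $i\in B$ a one-dimensional integration by parts in $x_{i}$ (together with Fubini) is justified; the boundary term at $x_{i}=\infty$ vanishes by compact support of $f$, and the one at $x_{i}=0$ is carried by the lower face $E_{+}(B\setminus\{i\})$. Using $\partial_{i}\varrho=\varrho\,\partial_{i}\ln\varrho$ $\mu_{B}^{\varrho,n,\beta}$-a.e.\ and the key observation that the face measure arising from $x_{i}=0$ equals $\beta^{-1}\lambda_{B\setminus\{i\}}^{n,\beta}$ (because of the weight $\beta^{n-\#B}$ against $\beta^{n-\#(B\setminus\{i\})}$), this gives
\begin{align*}
\mathcal{E}_{\scriptscriptstyle{B}}(f,g)=-\int_{E_{+}(B)}\big(\Delta^{B}f+(\nabla^{B}f,\nabla^{B}\ln\varrho)\big)g\,d\mu_{B}^{\varrho,n,\beta}-\frac{1}{\beta}\sum_{i\in B}\int_{E_{+}(B\setminus\{i\})}\partial_{i}f\,g\,d\mu_{B\setminus\{i\}}^{\varrho,n,\beta}.
\end{align*}

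Finally, I would complete the bulk term to $L^{B}f$ and telescope. Writing $\Delta^{B}f+(\nabla^{B}f,\nabla^{B}\ln\varrho)=L^{B}f-\frac{1}{\beta}\sum_{i\in I\setminus B}\partial_{i}f$, summing over $\varnothing\neq B$, and reindexing the boundary sum by $B\mapsto(B\setminus\{i\},i)$ --- as $B$ runs over the nonempty subsets and $i\in B$, the pairs $(B\setminus\{i\},i)$ run exactly over all $(B',i)$ with $B'\subset I$ and $i\in I\setminus B'$ --- the boundary terms cancel precisely against the transverse-drift terms $\frac{1}{\beta}\sum_{i\in I\setminus B}\int_{E_{+}(B)}\partial_{i}f\,g\,d\mu_{B}^{\varrho,n,\beta}$, and the only leftover is the missing $B=\varnothing$ summand $-\int_{E_{+}(\varnothing)}L^{\varnothing}f\,g\,d\mu_{\varnothing}^{\varrho,n,\beta}$. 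This produces $\mathcal{E}(f,g)=-\sum_{B\subset I}\int_{E_{+}(B)}L^{B}f\,g\,d\mu_{B}^{\varrho,n,\beta}$, which is the right-hand side computed above. The main obstacle is not analytic --- the integration by parts is elementary once $C^{1}(E)$-regularity and compact support are in hand --- but combinatorial: one must track on which face each boundary term lands and verify that the weight $\beta^{n-\#B}$ in $m_{n,\beta}$ is exactly calibrated so the codimension-one boundary term on $E_{+}(B)$ carries the factor $\beta^{-1}$ matching the transverse drift coefficient $1/\beta$ of $L^{B\setminus\{i\}}$ on the adjacent face. This calibration is the algebraic heart of the Wentzell boundary condition.
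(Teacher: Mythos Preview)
Your argument is correct and follows essentially the same route as the paper's proof: integrate by parts on each face $E_{+}(B)$, observe that the boundary term at $x_{i}=0$ lands on $E_{+}(B\setminus\{i\})$ with an extra factor $\beta^{-1}$ coming from the relative weights in $m_{n,\beta}$, and then match these against the transverse drift terms of $L^{B'}$ on the adjacent faces. The only difference is organizational: the paper proceeds level by level in $\#B$ (first $B=I$, then $\#B=n-1$, and so on down to the origin), writing out the accumulated formula explicitly, whereas you sum over all nonempty $B$ at once and handle the bookkeeping by the reindexing $(B,i)\mapsto(B\setminus\{i\},i)$, which makes the telescoping transparent and isolates the leftover $B=\varnothing$ term cleanly.
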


\begin{remark}\label{sticky}
Let $L_1^B:=\Delta^B  + \big( \nabla^B  , \nabla^B \ln \varrho \big)$ and $L_2^{\replaced{I\setminus B}{B}} :=(\nabla^{\replaced{I\setminus B}{B}},e)$. Using this notation we can express $L$ in the form
\begin{align*} Lf&= \sum_{B \subset I} \mathbbm{1}_{E_+(B)}  (L_1^B f + \frac{1}{\replaced{\beta}{s}} L_2^{I \backslash B} f) \\
&= \added{\mathbbm{1}_{E_+(I)}\cdot}L_1^I f + \sum_{B \subsetneq I} \mathbbm{1}_{E_+(B)} (-L_1^{I \backslash B}f + \frac{1}{\replaced{\beta}{s}} L_2^{I \backslash B}f),\quad f\in\mathcal{D}.
\end{align*}
The interpretation of $L$ is that on $E_+(B)$ the operator $L_1^B$ describes the dynamics of the coordinates $i \in B$ by means of a diffusive and a drift term whereas the operator $\frac{1}{\replaced{\beta}{s}} L_2^{I \backslash B}$ forces the remaining coordinates $i \in I \backslash B$ with constant drift $\frac{1}{\replaced{\beta}{s}}$ back to positive height. The operator $-L_1^B + \frac{1}{\replaced{\beta}{s}} L_2^B$ for $B \neq \varnothing$ is called a \textit{Wentzell type boundary operator}. The associated Cauchy problem can be formulated in the form
\begin{align}
\label{PDE}
\left\{
\begin{array}{l l}
& \frac{\partial}{\partial t} u_t(x) = \Delta u_t(x) + \big(\nabla u_t(x), \nabla (\ln \varrho)(x)\big), \quad \quad t>0, \ x \in E,  \\
& \partial_i^2 u_t(x) + \partial_i u_t(x) \partial_i (\ln \varrho)(x)  - \frac{1}{\replaced{\beta}{s}} \partial_i u_t(x)=0, \quad t >0, i \in I,  \ x \in E \cap \{x_i=0\}, \\
& u_0(x)=f(x)
\end{array}
\right.
\end{align}
The second line of (\ref{PDE}) is called \textit{Wentzell boundary condition (for the $i$-th coordinate)}.
\end{remark}

\begin{proof}[Proof of Proposition \ref{propibp}]
Let $f\in \replaced{\mathcal{D}}{C_c^2(E)}$ and $g \in \deleted{\mathcal{D}=}C_c^1(E)$. In order to show this representation we carry out an integration by parts. We start with $B=I$, i.e., $\# B=n$:
\begin{multline*}
\mathcal{E}_{\scriptscriptstyle{I}}(f,g)=\sum_{i\in I}\int_{\mathring{E}}\partial_if\,\partial_i g\,\varrho\,d\lambda^{\scriptscriptstyle{(n)}}_{\scriptscriptstyle{I}}
=\sum_{i\in I}\int_{\mathring{E}}\partial_if\varrho\,\partial_i g\,d\lambda_{\scriptscriptstyle{I}}^{\scriptscriptstyle{(n)}}\\
=\sum_{i\in I}\int_{\mathring{E}}\Big(-\partial^2_if\varrho-\partial_if\partial_i\varrho\Big)\,
g(x)\,d\lambda^{\scriptscriptstyle{(n)}}_{\scriptscriptstyle{I}}-\sum_{\stackunder{\#B=n-1}{B\subset I}}\sum_{i\in I\setminus B}\int_{E_+(B)}\partial_i fg\varrho\,d\lambda^{\scriptscriptstyle{(n)}}_{\scriptscriptstyle{B}}\\
=\sum_{i\in I}\int_{\scriptscriptstyle{\mathring{E}}}\Big(-\partial^2_i f-\partial_i f\partial_i \ln(\varrho)\Big)\,
g(x)\,\varrho\,d\lambda_{\scriptscriptstyle{I}}^{\scriptscriptstyle{(n)}}-\sum_{\stackunder{\#B=n-1}{B\subset I}}\sum_{i\in I\setminus B}\int_{E_+(B)}\partial_i f\,g\varrho\,d\lambda_{\scriptscriptstyle{B}}^{\scriptscriptstyle{(n)}}.
\end{multline*}
Next we consider all $B\subset I$ such that $\# B=n-1$, i.e.,
\begin{multline*}
\mathcal{E}_{\scriptscriptstyle{B}}(f,g)=\sum_{i\in B}\int_{E_{+}(B)}\partial_if\,\partial_i g\,\replaced{\beta}{s}\,\varrho\,\prod_{i\in B}dx_{+}^i\prod_{j\in I\setminus B}d\delta^j_0\\
=\sum_{i\in B}\int_{E_+(B)}\Big(-\partial^2_i f-\partial_i f\partial_i \ln(\varrho)\Big)\,
g\,\varrho\,\replaced{\beta}{s}\,d\lambda_{\scriptscriptstyle{B}}^{\scriptscriptstyle{(n)}}\\
-\sum_{\stackunder{\#\tilde{B}=n-2}{\tilde{B}\subset B}}\sum_{i\in B\setminus \tilde{B}}\int_{E_+(\tilde{B})}\partial_i f\,g\varrho\,\replaced{\beta}{s}\,d\lambda_{\scriptscriptstyle{\tilde{B}}}^{\scriptscriptstyle{(n)}}.
\end{multline*}
Proceeding inductively we end up with all $B\subset I$ fulfilling $\# B=1$, i.e., we consider
\begin{multline*}
\mathcal{E}_{\scriptscriptstyle{B}}(f,g)=\sum_{i\in B}\int_{E_{+}(B)}\partial_if\,\partial_i g\,\varrho\,\replaced{\beta}{s}^{n-1}\,\prod_{i\in B}dx_{+}^i\prod_{j\in I\setminus B}d\delta^j_0\\
=\sum_{i\in B}\int_{E_+(B)}\Big(-\partial^2_i f-\partial_i f\partial_i \ln(\varrho)\Big)\,
g\,\varrho\,\replaced{\beta}{s}^{n-1}\,d\lambda_{\scriptscriptstyle{B}}^{\scriptscriptstyle{(n)}}\\
-\replaced{\beta}{s}^{n-1}\,\sum_{i\in B}\partial_i f(0)\,g(0)\varrho(0).
\end{multline*}
Combining all this yields
\begin{multline}\label{ibp}
\mathcal{E}\big(f,g\big)=\sum_{\varnothing\not=B\subset I}\mathcal{E}_{\scriptscriptstyle{B}}\big(f,g\big)\\
=\sum_{i\in I}\int_{\mathring{E}}\Big(-\partial^2_i f-\partial_i f\partial_i \ln(\varrho)\Big)\,
g\,\varrho\,d\lambda_{\scriptscriptstyle{I}}^{\scriptscriptstyle{(n)}}\\
+\sum_{\stackunder{\#B=n-1}{B\subset I}}\int_{E_+(B)}\left(\sum_{i\in B}\Big(-\partial^2_i f-\partial_i f\partial_i \ln(\varrho)\Big)-\frac{1}{\replaced{\beta}{s}}\sum_{i\in I\setminus B}\partial_i f\right)\,
\,g\,\replaced{\beta}{s}\,\varrho\,d\lambda_{\scriptscriptstyle{B}}^{\scriptscriptstyle{(n)}}\\
+\sum_{\stackunder{\#B=n-2}{B\subset I}}\int_{E_+(B)}\left(\sum_{i\in B}\Big(-\partial^2_i f-\partial_i f\partial_i \ln(\varrho)\Big)-\frac{1}{\replaced{\beta}{s}}\sum_{i\in I\setminus B}\partial_i f\right)\,
\,g\,\replaced{\beta}{s}^2\,\varrho\,d\lambda_{\scriptscriptstyle{B}}^{\scriptscriptstyle{(n)}}\\
\begin{array}{c}
+\\
\vdots\\
+
\end{array}\\
\sum_{\stackunder{\#B=1}{B\subset I}}\int_{E_+(B)}\left(\sum_{i\in B}\Big(-\partial^2_i f-\partial_i f\partial_i \ln(\varrho)\Big)-\frac{1}{\replaced{\beta}{s}}\sum_{i\in I\setminus B}\partial_i f\right)\,
\,g\,\replaced{\beta}{s}^{n-1}\,\varrho(x)\,d\lambda_{\scriptscriptstyle{B}}^{\scriptscriptstyle{(n)}}\\
-\sum_{i\in I}\frac{1}{\replaced{\beta}{s}}\,\partial_i f(0)\,g(0)\,\replaced{\beta}{s}^n\,\varrho(0).
\end{multline}
Now using the definition of $L$, we obtain the desired result.
\end{proof}

\section{The associated Markov process}\label{sectprocess}
Since $\big(\mathcal{E},D(\mathcal{E})\big)$ is a regular, symmetric Dirichlet form on $L^2\big(E;\mu_{\varrho,n,\replaced{\beta}{s}}\big)$ which is \added{recurrent, hence in particular} conservative, and possesses the strong local property, we obtain the following theorem, where $\big(T_t\big)_{t>0}$ denotes the $C^0$-semigroup corresponding to $\big(\mathcal{E},D(\mathcal{E})\big)$, see e.g.~\deleted{MR}
\cite[Chap.~4 and Chap.~7]{FOT94}.
\begin{theorem}\label{theoprocess}
Suppose that Conditions \ref{conddensity} and \ref{condHamza} are satisfied. Then there exists a conservative diffusion process (i.e.~a strong Markov process with continuous sample paths and infinite life time)
\begin{align*}
\mathbf{M}:=\mathbf{M}^{\varrho,n,\replaced{\beta}{s}}:=\big(\mathbf{\Omega},\mathbf{F},(\mathbf{F}_t)_{t\ge 0},(\mathbf{X}_t)_{t\ge 0},(\mathbf{\Theta}_t)_{t\ge 0},(\mathbf{P}^{\varrho,n,\replaced{\beta}{s}}_x)_{x\in E}\big)
\end{align*}
with state space $E$ which is \deleted{properly} associated with $\big(\mathcal{E},D(\mathcal{E})\big)$, i.e., for all ($\mu_{\varrho,n,\replaced{\beta}{s}}$-versions of) $f\in L^2\big(E;\mu_{\varrho,n,\replaced{\beta}{s}}\big)$ and all $t>0$ the function
\begin{align*}
E\ni x\mapsto p_tf(x):=\mathbb{E}^{\varrho,n,\replaced{\beta}{s}}_x\Big(f\big(\mathbf{X}_t\big)\Big):=\int_{\mathbf{\Omega}}f\big(\mathbf{X}_t\big)\,d\mathbf{P}^{\varrho,n,\replaced{\beta}{s}}_x\in[0,\infty)
\end{align*}
is a \deleted{$\mathcal{E}$-}quasi continuous version of $T_tf$. $\mathbf{M}$ is up to $\mu_{\varrho,n,\replaced{\beta}{s}}$-equivalence unique. In particular, $\mathbf{M}$ is $\mu_{\varrho,n,\replaced{\beta}{s}}$-symmetric, i.e., 
\begin{align*}
\int_{E}p_tf\,g\,d\mu_{\varrho,n,\replaced{\beta}{s}}=\int_{E}f\,p_tg\,d\mu_{\varrho,n,\replaced{\beta}{s}}\quad\text{for all}\quad f,g\replaced{:E\to[0,\infty)\text{ measurable}}{\in L^2\big(E;\mu_{\varrho,n,\replaced{\beta}{s}}\big)}\text{ and all }t>0,
\end{align*}
and has $\mu_{\varrho,n,\replaced{\beta}{s}}$ as \added{reversible,} invariant measure, i.e., 
\begin{align*}
\int_{E}p_tf\,d\mu_{\varrho,n,\replaced{\beta}{s}}=\int_{E}f\,d\mu_{\varrho,n,\replaced{\beta}{s}}\quad\text{for all}\quad f\replaced{:E\to[0,\infty)\text{ measurable}} {\in L^2\big(E;\mu_{\varrho,n,\replaced{\beta}{s}}\big)}\text{ and all }t>0.
\end{align*}
\end{theorem}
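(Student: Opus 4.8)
The plan is to obtain Theorem \ref{theoprocess} as an application of the general existence and uniqueness theory for regular symmetric Dirichlet forms in \cite[Chap.~4 and Chap.~7]{FOT94}, feeding in the structural properties of $\big(\mathcal{E},D(\mathcal{E})\big)$ already secured in Theorem \ref{theosumdiri}. First I would record that the standing hypotheses of that framework hold: the state space $E=[0,\infty)^n$ is a locally compact separable metric space, countable at infinity (Remark \ref{remBaire}), and $\mu_{\varrho,n,\beta}=\varrho\,m_{n,\beta}$ is a Radon measure on $\big(E,\mathcal{B}(E)\big)$. One point deserving a short verification is that $\mu_{\varrho,n,\beta}$ has full topological support: since $m_{n,\beta}=\sum_{B\subset I}\lambda_{\scriptscriptstyle B}^{n,\beta}$ charges every relatively open subset of each face $E_{\scriptscriptstyle +}(B)$ and $\varrho>0$ $m_{n,\beta}$-a.e.\ by Condition \ref{conddensity}, the supports of the summands exhaust $E$, whence $\operatorname{supp}[\mu_{\varrho,n,\beta}]=E$. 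Given this, Theorem \ref{theosumdiri} supplies a regular symmetric Dirichlet form, so \cite[Theorem 7.2.1]{FOT94} yields a $\mu_{\varrho,n,\beta}$-symmetric Hunt process $\mathbf{M}$ properly associated with $\big(\mathcal{E},D(\mathcal{E})\big)$; proper association is exactly the assertion that $p_tf$ is an $\mathcal{E}$-quasi continuous version of $T_tf$ for every $\mu_{\varrho,n,\beta}$-version of $f\in L^2\big(E;\mu_{\varrho,n,\beta}\big)$ and every $t>0$.

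Next I would upgrade $\mathbf{M}$ to a conservative diffusion. Because Theorem \ref{theosumdiri} provides the strong local property, the equivalence between locality of the form and continuity of the sample paths, \cite[Theorem 4.5.1]{FOT94}, shows that (a modification of) $\mathbf{M}$ has continuous paths up to its lifetime, i.e.\ $\mathbf{M}$ is a diffusion. Infinite lifetime is then inherited from the form: Theorem \ref{theosumdiri} gives recurrence, hence conservativeness, and by \cite[Theorem 1.6.3]{FOT94} this transfers to $T_t\mathbbm{1}_E=\mathbbm{1}_E$ and, via proper association, to $p_t\mathbbm{1}_E=\mathbbm{1}_E$ $\mu_{\varrho,n,\beta}$-a.e.\ for all $t>0$, so $\mathbf{M}$ never reaches the cemetery in finite time. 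Uniqueness up to $\mu_{\varrho,n,\beta}$-equivalence is the corresponding uniqueness statement of the theory \cite[Chap.~4]{FOT94}: any two Hunt processes properly associated with the same regular Dirichlet form have transition functions that coincide off a common properly exceptional set.

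Finally I would deduce symmetry, reversibility and invariance. For $f,g\in L^2\big(E;\mu_{\varrho,n,\beta}\big)$ the identity $\int_E p_tf\,g\,d\mu_{\varrho,n,\beta}=\int_E f\,p_tg\,d\mu_{\varrho,n,\beta}$ is precisely the self-adjointness of $T_t$ read through proper association, which is the reversibility of $\mu_{\varrho,n,\beta}$. To reach arbitrary measurable $f,g:E\to[0,\infty)$, I would first treat bounded nonnegative functions of compact support, where the identity is inherited from the $L^2$ case, and then pass to the general case by monotone convergence, using that $(p_t)_{t>0}$ is positivity preserving. Invariance follows by specialization: taking $g=\mathbbm{1}_E$ and invoking conservativeness $p_t\mathbbm{1}_E=\mathbbm{1}_E$ gives $\int_E p_tf\,d\mu_{\varrho,n,\beta}=\int_E f\,p_t\mathbbm{1}_E\,d\mu_{\varrho,n,\beta}=\int_E f\,d\mu_{\varrho,n,\beta}$. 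The main obstacle is not any single hard estimate but the careful bookkeeping at the interface with the general theory, namely verifying full support of $\mu_{\varrho,n,\beta}$ on the stratified state space and justifying the passage from $L^2$ to all nonnegative measurable functions in the symmetry and invariance identities; everything substantive, i.e.\ regularity, strong locality and recurrence, has already been established in Theorem \ref{theosumdiri}.
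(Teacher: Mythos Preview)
Your proposal is correct and follows essentially the same route as the paper: both invoke the general existence and uniqueness theory for regular symmetric Dirichlet forms from \cite{FOT94}, using the regularity, strong locality and recurrence/conservativeness of $(\mathcal{E},D(\mathcal{E}))$ already established in Theorem \ref{theosumdiri}. The paper's own proof is just the one-line reference to \cite[Theo.~7.2.2 and Exercise 4.5.1]{FOT94}; you have merely unpacked that reference (full support, continuity of paths from locality, conservativeness, and the extension of symmetry/invariance to nonnegative measurable functions), which is entirely in line with the intended argument.
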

In the above theorem $\mathbf{M}$ is canonical, i.e., $\mathbf{\Omega}=C^0\big([0,\infty),E\big)$, the space of continuous functions on $[0,\infty)$ into $E$, $\mathbf{X}_t(\omega)=\omega(t)$, $\omega\in\mathbf{\Omega}$. The filtration $(\mathbf{F}_t)_{t\ge 0}$ is the natural minimum completed admissible filtration obtained from the $\sigma$-algebras $\mathbf{F}^0_t:=\sigma\Big\{\textbf{X}_{\replaced{s}{\tau}}\,\Big|\,0\le \replaced{s}{\tau}\le t\Big\}$, $t\ge 0$, and $\mathbf{F}:=\mathbf{F}_{\scriptscriptstyle{\infty}}:=\bigvee_{t\in[0,\infty)}\mathbf{F}_t$. For each $t\ge 0$ we denote by $\mathbf{\Theta}_t:\mathbf{\Omega}\to\mathbf{\Omega}$ a shift operator such that $\mathbf{X}_{\replaced{s}{\tau}}\circ\mathbf{\Theta}_t=\mathbf{X}_{\replaced{s}{\tau}+t}$ for all $\replaced{s}{\tau}\ge 0$.
\begin{proof}
See e.g.~\deleted{MR}
\cite[Theo.~7.2.2 and Exercise 4.5.1~]{FOT94}.
\end{proof}

\begin{theorem}\label{theomartingale}
The diffusion process $\mathbf{M}$ from Theorem \ref{theoprocess} is up to $\mu_{\varrho,n,\replaced{\beta}{s}}$-equivalence the unique diffusion process having $\mu_{\varrho,n,\replaced{\beta}{s}}$ as symmetrizing measure and solving the martingale problem for $\big(H,D(H)\big)$, i.e., for all $g\in D(H)$
\begin{align*}
\widetilde{g}(\mathbf{X}_t)-\widetilde{g}(\mathbf{X}_0)+\int_0^t \big(Hg\big)(\mathbf{X}_{\replaced{s}{\tau}})\,d\replaced{s}{\tau},\quad t\ge 0,
\end{align*}
is an $\mathbf{F}_t$-martingale under $\mathbf{P}^{\varrho,n,\replaced{\beta}{s}}_x$ for \deleted{$\mathcal{E}$-}quasi all $x\in E$. \deleted{$\mathcal{E}$-}\replaced{Q}{q}uasi all $x\in E$ or \deleted{$\mathcal{E}$-}quasi every $x\in E$ (abbreviated by q.a.~$x\in E$ or q.e.~$x\in E$, respectively) means all $x\in E$ except those contained in a set of \deleted{$\mathcal{E}$-}capacity zero. 
(Here $\widetilde{g}$ denotes a quasi-continuous version of $g$, see \cite[Chap.~IV, Prop.~3.3]{MR92}.\added{ Moreover, note that in our setting the notions of capacity in the sence of \cite{MR92} and \cite{FOT94} coincide.})
\end{theorem}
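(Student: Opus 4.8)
The plan is to split the statement into an existence assertion --- that the process $\mathbf{M}$ of Theorem \ref{theoprocess} solves the martingale problem --- and a uniqueness assertion, and to treat both with the standard Fukushima machinery. For existence I would invoke the Fukushima decomposition: since $(\mathcal{E}, D(\mathcal{E}))$ is a regular symmetric Dirichlet form (Theorem \ref{theosumdiri}) with associated process $\mathbf{M}$, every $g \in D(\mathcal{E})$ admits
\[ \widetilde{g}(\mathbf{X}_t) - \widetilde{g}(\mathbf{X}_0) = M^{[g]}_t + N^{[g]}_t \quad \mathbf{P}_x\text{-a.s. for q.e. } x, \]
with $M^{[g]}$ a martingale additive functional of finite energy and $N^{[g]}$ a continuous additive functional of zero energy, see \cite[Theorem 5.2.2]{FOT94}. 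The decisive point is that on the smaller domain $D(H)$ the zero-energy part becomes explicit: by \cite[Theorem 5.2.4]{FOT94} one has $N^{[g]}_t = -\int_0^t (Hg)(\mathbf{X}_s)\, ds$, the sign reflecting that $H = -L$ is the positive generator furnished by Proposition \ref{propgen}. Rearranging gives
\[ \widetilde{g}(\mathbf{X}_t) - \widetilde{g}(\mathbf{X}_0) + \int_0^t (Hg)(\mathbf{X}_s)\, ds = M^{[g]}_t, \]
and the right-hand side is a martingale additive functional, hence an $\mathbf{F}_t$-martingale under $\mathbf{P}_x$ for q.e. $x$, which is exactly the claimed property.

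For uniqueness, let $\mathbf{M}'$ be any diffusion having $\mu_{\varrho,n,\beta}$ as symmetrizing measure and solving the martingale problem for $(H, D(H))$, and write $(p'_t)$ for its transition semigroup; symmetry makes each $p'_t$ a symmetric contraction on $L^2(E; \mu_{\varrho,n,\beta})$. Taking expectations in the martingale identity yields, for every $g \in D(H)$,
\[ p'_t g = g - \int_0^t p'_s(Hg)\, ds \quad \text{in } L^2(E; \mu_{\varrho,n,\beta}), \]
so that $p'_t g \to g$ as $t \downarrow 0$; since $D(H)$ is dense and the $p'_t$ are contractions, $(p'_t)$ is strongly continuous and its generator $L'$ is self-adjoint. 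The same identity gives $\left.\frac{d}{dt}\right|_{t=0} p'_t g = -Hg$, hence $D(H) \subset D(L')$ and $L' = -H$ on $D(H)$. As both $-H$ and $L'$ are self-adjoint and a self-adjoint operator has no proper symmetric extension, $L' = -H$, so $p'_t = T_t = p_t$ for all $t > 0$. Consequently $\mathbf{M}'$ shares the transition semigroup of $\mathbf{M}$, is therefore associated with $(\mathcal{E}, D(\mathcal{E}))$, and is identified with $\mathbf{M}$ up to $\mu_{\varrho,n,\beta}$-equivalence by the uniqueness part of Theorem \ref{theoprocess}.

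I expect the main obstacle to be twofold. First, the clean identification $N^{[g]}_t = -\int_0^t (Hg)(\mathbf{X}_s)\, ds$ for $g \in D(H)$ must be justified rather than merely quoted, which requires controlling the energy of the additive functionals and using quasi-continuity of the representatives. Second, in the uniqueness step one must legitimately pass from the pathwise martingale identity, valid for q.e. starting point, to the $L^2$-identity for $(p'_t)$: this needs $s \mapsto p'_s(Hg)$ to be continuous in $L^2$ so that the fundamental theorem of calculus applies at $t = 0$. Both points are underwritten by the regularity of $(\mathcal{E}, D(\mathcal{E}))$ and by the association and quasi-continuity statements of Theorem \ref{theoprocess}; it is also worth recording, as the excerpt already remarks, that the capacities of \cite{MR92} and \cite{FOT94} coincide here, so the quasi-everywhere assertions transfer between the two frameworks.
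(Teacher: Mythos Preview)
Your argument is correct and is essentially what the paper's proof amounts to: the paper does not argue anything itself but simply cites \cite[Theo.~3.4(i)]{AR95}, and your sketch via the Fukushima decomposition for existence and the self-adjointness argument for uniqueness is precisely the content of that reference (or the parallel statements in \cite{FOT94} you invoke). So you have unfolded the citation rather than taken a different route.
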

\begin{proof}
See e.g.~\cite[Theo.~3.4(i)]{AR95}.
\end{proof}

\section{Analysis of the stochastic process by additive functionals}\label{sectanapro}  
Throughout this section we assume that we are given the regular, symmetric Dirichlet form $\big(\mathcal{E},D(\mathcal{E})\big)$ on $L^2\big(E;\mu_{\varrho,n,\replaced{\beta}{s}}\big)$ which is \added{recurrent, hence in particular} conservative\added{,} and possesses the strong local property, see Section \ref{sectfuana}, and the associated diffusion process $\mathbf{M}$ from Section \ref{sectprocess}. 
Let $g\in D(\mathcal{E})$ be essentially bounded. Due to \cite[Sect.~3.2]{FOT94} there exists a unique, finite, positive Radon measure $\nu_{\scriptscriptstyle{\langle g\rangle}}$ on $\big(E,\mathcal{B}_{E}\big)$ satisfying
\begin{align*}
\int_{E}f\,d\nu_{\scriptscriptstyle{\langle g\rangle}}=2\,\mathcal{E}(gf,g)-\mathcal{E}(g^2,f)\quad\text{for all }f\in D(\mathcal{E})\cap C^{0}_c\big(E\big).
\end{align*}
\begin{remark}
For an essentially bounded $g\in D(\mathcal{E})$ the measure $\nu_{\scriptscriptstyle{\langle g\rangle}}$ is called the \emph{energy measure} of $g$.
\end{remark}

\begin{lemma} \label{lemweakdiff} \added{Suppose that Condition \ref{conddensity} is satisfied and that $\varrho$ is additionally continuous on $E$.} Let $\Omega$ be a relatively compact subset of $E \backslash \{ \varrho =0\}$ such that $\overline{\Omega} \subset E \backslash \{ \varrho =0\}$ and $\varnothing\not=B\subset I$ such that $E_+(B) \cap \Omega$ is non-empty. Then the restriction map $i_B:g \mapsto g|_{E_+(B) \cap \Omega}$ maps continuously from $D(\mathcal{E})$ to $H^{1,2}(E_+(B) \cap \Omega)$, i.e., there exists a constant $C_B=C_B(B,\replaced{\beta}{s},n,\varrho,\Omega)$ such that $\Vert g \Vert_{H^{1,2}(E_+(B) \cap \Omega)} \leq C_B \sqrt{\mathcal{E}_1(g,g)}$
for $g \in D(\mathcal{E})$.
\end{lemma}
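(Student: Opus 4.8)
The plan is to prove the estimate first on the form core $\mathcal{D}=C_c^2(E)$, where $g$ is a genuine smooth function, and then to transport it to all of $D(\mathcal{E})$ by the closure/density argument already used for closability. The decisive analytic input is a compactness fact: since $\overline{\Omega}$ is compact, contained in $E\setminus\{\varrho=0\}$, and $\varrho$ is (additionally assumed) continuous, the positive function $\varrho$ attains a strictly positive minimum on $\overline{\Omega}$, so there is a constant $c=c(\varrho,\Omega)>0$ with $\varrho\ge c$ on $\overline{\Omega}$. This is exactly where both hypotheses on $\Omega$ and the continuity of $\varrho$ are consumed, and it is the only property of $\varrho$ on $\Omega$ I shall need.

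Next, for $g\in\mathcal{D}$ I would compare the flat Sobolev norm on $U:=E_+(B)\cap\Omega$ with the form. Identifying $E_+(B)$ via its nonzero coordinates with an open subset of $\mathbb{R}^{\#B}$, the measure $\lambda_B^{(n)}$ is the $\#B$-dimensional Lebesgue measure and $\nabla^B$ the associated gradient, so $H^{1,2}(U)$ is the usual Sobolev space with $\|g\|_{H^{1,2}(U)}^2=\int_U g^2\,d\lambda_B^{(n)}+\int_U|\nabla^B g|^2\,d\lambda_B^{(n)}$. Recalling $\mu_B^{\varrho,n,\beta}=\varrho\,\lambda_B^{n,\beta}=\varrho\,\beta^{\,n-\#B}\lambda_B^{(n)}$, the bound $\varrho\ge c$ gives for the $L^2$-part
\[\int_U g^2\,d\lambda_B^{(n)}\le\frac{1}{c\,\beta^{\,n-\#B}}\int_U g^2\,d\mu_B^{\varrho,n,\beta}\le\frac{1}{c\,\beta^{\,n-\#B}}\,\|g\|_{L^2(E;\mu_{\varrho,n,\beta})}^2,\]
where the last step uses $\mu_B^{\varrho,n,\beta}\le\mu_{\varrho,n,\beta}=\sum_{B'\subset I}\mu_{B'}^{\varrho,n,\beta}$. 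The same two steps applied to $\nabla^B g$, together with $\int_U|\nabla^B g|^2\,d\mu_B^{\varrho,n,\beta}\le\mathcal{E}_B(g,g)\le\mathcal{E}(g,g)$ (all summands of $\mathcal{E}$ being nonnegative), yield
\[\int_U|\nabla^B g|^2\,d\lambda_B^{(n)}\le\frac{1}{c\,\beta^{\,n-\#B}}\,\mathcal{E}(g,g).\]
Adding the two estimates gives $\|i_Bg\|_{H^{1,2}(U)}^2\le(c\,\beta^{\,n-\#B})^{-1}\mathcal{E}_1(g,g)$, that is, the claim for $g\in\mathcal{D}$ with $C_B=(c\,\beta^{\,n-\#B})^{-1/2}$, a constant depending only on $B,\beta,n,\varrho,\Omega$ as required.

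Finally I would pass from $\mathcal{D}$ to $D(\mathcal{E})$. Given $g\in D(\mathcal{E})$, choose $g_k\in\mathcal{D}$ with $g_k\to g$ in $\|\cdot\|_{\mathcal{E}_1}$, possible since $D(\mathcal{E})$ is the $\mathcal{E}_1$-closure of $\mathcal{D}$. Applying the estimate to $g_k-g_l$ shows that $(i_Bg_k)$ is Cauchy in $H^{1,2}(U)$, hence converges to some $\widetilde g\in H^{1,2}(U)$. On the other hand $g_k\to g$ in $L^2(E;\mu_{\varrho,n,\beta})$, hence in $L^2(U;\mu_B^{\varrho,n,\beta})$, and the bound $\varrho\ge c$ turns this into convergence in $L^2(U;\lambda_B^{(n)})$; since $H^{1,2}(U)$-convergence also forces $L^2(U;\lambda_B^{(n)})$-convergence, uniqueness of $L^2$-limits identifies $\widetilde g$ with the restriction $g|_U$. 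Passing to the limit in $\|i_Bg_k\|_{H^{1,2}(U)}\le C_B\sqrt{\mathcal{E}_1(g_k,g_k)}$ then yields the asserted inequality for $g$ and shows $i_B$ is well defined and continuous on all of $D(\mathcal{E})$.

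The measure comparisons are routine; the only point demanding care, and hence the natural candidate for the main obstacle, is the identification of the abstract $H^{1,2}$-limit $\widetilde g$ with the concrete restriction $g|_U$. This is precisely what upgrades continuity of $i_B$ from the smooth core to the full domain $D(\mathcal{E})$, and it works because the lower bound $\varrho\ge c$ makes the $H^{1,2}(U)$- and $L^2(U;\lambda_B^{(n)})$-convergences available simultaneously.
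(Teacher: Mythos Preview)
Your proposal is correct and follows essentially the same route as the paper's own proof: both use compactness of $\overline{\Omega}\subset\{\varrho>0\}$ and continuity of $\varrho$ to obtain a uniform lower bound $\varrho\ge c>0$ on $\overline{\Omega}$, deduce the estimate $\|g\|_{H^{1,2}(E_+(B)\cap\Omega)}^2\le(c\,\beta^{n-\#B})^{-1}\mathcal{E}_1(g,g)$ for $g\in\mathcal{D}$ from $\mu_B^{\varrho,n,\beta}=\varrho\,\beta^{n-\#B}\lambda_B^{(n)}$, and then extend to $D(\mathcal{E})$ by density, identifying the abstract $H^{1,2}$-limit with the restriction $g|_U$ via uniqueness of $L^2(U;\lambda_B^{(n)})$-limits. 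Your explicit flagging of the identification step as the one point requiring care is apt and matches the paper's treatment.
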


\begin{proof} Let $\varnothing \neq B \subset I$ such that $E_+(B) \cap \Omega \neq \varnothing$. By assumption $\overline{\Omega}$ is compact and \deleted{properly} contained in $\{\varrho >0 \}$. Therefore, there exist constants $\varrho^+,\varrho^-\in (0,\infty)$ and  such that $\varrho^- \leq \varrho \leq \varrho^+$ on $\overline{\Omega}$. Let $g \in \mathcal{D}$. Note that $g \in \mathcal{D}$ is (weakly) differentiable on $E_+(B)$ with gradient $\nabla^B g:=\sum_{i\in B}\partial_i g\,e_i$. We have
\begin{align*} & \int_{E_+(B) \cap \Omega} g^2 \ d\lambda_B^{(n)} + \int_{E_+(B) \cap \Omega} |\nabla^B g|^2 \ d\lambda_B^{(n)} \\
&\leq \frac{1}{\replaced{\beta}{s}^{n-\#B} \varrho^-} \left( \int_{E_+(B) \cap \Omega} g^2 \ d\mu_B^{\varrho,n,\replaced{\beta}{s}} + \int_{E_+(B) \cap \Omega} |\nabla^B g|^2 \ d\mu_B^{\varrho,n,\replaced{\beta}{s}} \right) \\
&\leq \frac{1}{\replaced{\beta}{s}^{n-\#B} \varrho^-} \left(\int_{E_+(B)} g^2 \ d\mu_B^{\varrho,n,\replaced{\beta}{s}} + \int_{E_+(B)} |\nabla^B g|^2 \ d\mu_B^{\varrho,n,\replaced{\beta}{s}} \right) \\
&\leq \frac{1}{\replaced{\beta}{s}^{n-\#B} \varrho^-}\, \mathcal{E}_1(u,u) < \infty.
\end{align*}
Hence, $i_B: \mathcal{D} \rightarrow H^{1,2}(E_+(B) \cap \Omega)$ is well-defined and continuous. Therefore, $i_B$ admits a continuous extension to $D(\mathcal{E})$. Let $g \in D(\mathcal{E})$ and $(g_k)_{k \in \mathbb{N}}$ be a sequence in $\mathcal{D}$ converging to $g$ with respect to the $\mathcal{E}_1^{\frac{1}{2}}$-norm. Then $i_B(g_k)=g_k|_{E_+(B) \cap \Omega} \rightarrow v=i_B(g)$ in $H^{1,2}(E_+(B) \cap \Omega)$. In particular, the same holds true with respect to the $L^2(E_+(B) \cap \Omega;\lambda_B^{(n)})$-norm. Certainly, the convergence of $(g_k)_{k \in \mathbb{N}}$ to $g$ implies convergence in $L^2(E;\mu_{\varrho,n,\replaced{\beta}{s}})$ which in turn implies that $g_k|_{E_+(B) \cap \Omega} \rightarrow g|_{E_+(B) \cap \Omega}$ in $L^2(E_+(B) \cap \Omega;\lambda_B^{(n)})$ due to the boundedness of $\varrho$ on $\Omega$. Hence, we can conclude that $i_B(g)=g|_{E_+(B) \cap \Omega}$ by the uniqueness of the limit. Thus, the map
\[ i_B: D(\mathcal{E}) \rightarrow H^{1,2}(E_+(B) \cap \Omega), g \mapsto g|_{E_+(B) \cap \Omega} \]
is well-defined and continuous. Set $C_B := \frac{1}{\sqrt{\replaced{\beta}{s}^{n-\#B} \varrho^{-}}}$.
\end{proof}

\begin{proposition}\label{propenergymeasure}
Suppose that Conditions \ref{conddensity} \deleted{and \ref{condHamza} are} \added{is} satisfied \added{and that $\varrho$ is additionally continuous on $E$}. Let $g\in D(\mathcal{E}) \cap C^0_c(\mathcal{E})$. Then $g$ is weakly differentiable on $E_+(B) \setminus \{ \varrho =0\}$ for each $\varnothing \neq B \subset I$ with gradient $\nabla^B g \in L^2(E_+(B);\mu_B^{\varrho,n,\replaced{\beta}{s}})$ and its energy measure $\nu_{\scriptscriptstyle{\langle g\rangle}}$ is given by
\begin{align*}
\nu_{\scriptscriptstyle{\langle g\rangle}}=2\sum_{\varnothing\not=B\subset I}| \nabla^B g |^2\,\mu_{\scriptscriptstyle{B}}^{\varrho,n,\replaced{\beta}{s}}.
\end{align*}
In particular, for $g \in \mathcal{D}$ holds
\begin{align*}
\nu_{\scriptscriptstyle{\langle g\rangle}}=2\sum_{\varnothing\not=B\subset I} \sum_{i \in B}  \big(\partial_i g\big)^2\,\mu_{\scriptscriptstyle{B}}^{\varrho,n,\replaced{\beta}{s}}.
\end{align*}
\end{proposition}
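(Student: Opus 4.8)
The plan is to represent the form $\mathcal{E}$ through weak gradients and then to substitute this representation into the defining identity $\int_E f\,d\nu_{\scriptscriptstyle{\langle g\rangle}}=2\mathcal{E}(gf,g)-\mathcal{E}(g^2,f)$ of the energy measure. The first and central step is to establish, for \emph{every} $u\in D(\mathcal{E})$, weak differentiability on $E_+(B)\setminus\{\varrho=0\}$ together with the representation
\begin{align}\label{reprstar}
\mathcal{E}(u,v)=\sum_{\varnothing\not=B\subset I}\int_{E_+(B)}\big(\nabla^B u,\nabla^B v\big)\,d\mu_{\scriptscriptstyle{B}}^{\varrho,n,\beta},\quad u,v\in D(\mathcal{E}),\quad\nabla^B u\in L^2(E_+(B);\mu_{\scriptscriptstyle{B}}^{\varrho,n,\beta}).
\end{align}

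To prove (\ref{reprstar}) I fix $\varnothing\not=B\subset I$ and choose an exhaustion $(\Omega_m)_{m\in\mathbb{N}}$ of the open set $\{\varrho>0\}=E\setminus\{\varrho=0\}$ by relatively compact sets with $\overline{\Omega_m}\subset\{\varrho>0\}$, which exists since $\varrho$ is continuous and $E$ is locally compact and $\sigma$-compact. Given $u\in D(\mathcal{E})$, I pick $(u_k)_{k\in\mathbb{N}}\subset\mathcal{D}$ with $u_k\to u$ with respect to $\Vert\cdot\Vert_{\mathcal{E}_1}$; then for $i\in B$ the sequence $(\partial_i u_k)_k$ is Cauchy in the $\mathcal{E}$-seminorm, hence converges in $L^2(E_+(B);\mu_{\scriptscriptstyle{B}}^{\varrho,n,\beta})$ to some $h_i^B$. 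On each $E_+(B)\cap\Omega_m$, Lemma \ref{lemweakdiff} gives $u_k|_{E_+(B)\cap\Omega_m}\to u|_{E_+(B)\cap\Omega_m}$ in $H^{1,2}(E_+(B)\cap\Omega_m)$, so the weak derivative of $u$ there exists and equals the $L^2(\lambda_{\scriptscriptstyle{B}}^{\scriptscriptstyle{(n)}})$-limit of $\partial_i u_k$; since $\varrho$ is bounded away from $0$ and $\infty$ on $\overline{\Omega_m}$, the $L^2(\lambda_{\scriptscriptstyle{B}}^{\scriptscriptstyle{(n)}})$- and $L^2(\mu_{\scriptscriptstyle{B}}^{\varrho,n,\beta})$-norms are equivalent on $\Omega_m$, which forces $h_i^B=\partial_i u$ a.e.\ on $E_+(B)\cap\Omega_m$. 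Letting $m\to\infty$ and using $\mu_{\scriptscriptstyle{B}}^{\varrho,n,\beta}(\{\varrho=0\})=0$ identifies $\nabla^B u=\sum_{i\in B}h_i^B\,e_i\in L^2(E_+(B);\mu_{\scriptscriptstyle{B}}^{\varrho,n,\beta})$, and passing to the limit in $\mathcal{E}(u_k,v_k)=\sum_B\int_{E_+(B)}(\nabla^B u_k,\nabla^B v_k)\,d\mu_{\scriptscriptstyle{B}}^{\varrho,n,\beta}$ (with $v_k\to v$) yields (\ref{reprstar}).

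With (\ref{reprstar}) at hand I turn to $g\in D(\mathcal{E})\cap C^0_c(E)$. Since $D(\mathcal{E})\cap L^\infty$ is an algebra (see \cite[Theo.~1.4.2]{FOT94}) and $g,f$ are bounded, both $gf$ and $g^2$ belong to $D(\mathcal{E})\cap C^0_c(E)$; on each $E_+(B)\cap\Omega_m$ the product rule for bounded $H^{1,2}$-functions gives $\nabla^B(gf)=f\,\nabla^B g+g\,\nabla^B f$ and $\nabla^B(g^2)=2g\,\nabla^B g$, hence $\mu_{\scriptscriptstyle{B}}^{\varrho,n,\beta}$-a.e. Inserting these into (\ref{reprstar}) and into the defining identity, the cross terms $g\,(\nabla^B f,\nabla^B g)$ cancel and I obtain
\begin{align*}
\int_E f\,d\nu_{\scriptscriptstyle{\langle g\rangle}}=2\mathcal{E}(gf,g)-\mathcal{E}(g^2,f)=2\sum_{\varnothing\not=B\subset I}\int_{E_+(B)}f\,\big|\nabla^B g\big|^2\,d\mu_{\scriptscriptstyle{B}}^{\varrho,n,\beta}
\end{align*}
for all $f\in D(\mathcal{E})\cap C^0_c(E)$. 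As $2\sum_{B}|\nabla^B g|^2\,\mu_{\scriptscriptstyle{B}}^{\varrho,n,\beta}$ is a finite positive Radon measure (its total mass equals $2\mathcal{E}(g,g)$ by (\ref{reprstar})) and $D(\mathcal{E})\cap C^0_c(E)$ is $\Vert\cdot\Vert_{\sup}$-dense in $C^0_c(E)$ by regularity of $(\mathcal{E},D(\mathcal{E}))$, the two measures coincide, which is the asserted formula. The particular case $g\in\mathcal{D}$ then follows at once, since there the weak gradient equals the classical one and $|\nabla^B g|^2=\sum_{i\in B}(\partial_i g)^2$.

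I expect the main obstacle to be the second paragraph: identifying the $L^2(\mu_{\scriptscriptstyle{B}}^{\varrho,n,\beta})$-limit of the approximating gradients with the genuine weak gradient of $u$ on all of $E_+(B)\setminus\{\varrho=0\}$. This is precisely where Lemma \ref{lemweakdiff} and the continuity of $\varrho$ (needed to pass between the reference measures $\lambda_{\scriptscriptstyle{B}}^{\scriptscriptstyle{(n)}}$ and $\mu_{\scriptscriptstyle{B}}^{\varrho,n,\beta}$) are indispensable, and where the exhaustion by sets compactly contained in $\{\varrho>0\}$ is required to control the degeneracy of $\mu_{\scriptscriptstyle{B}}^{\varrho,n,\beta}$ on the zero set of $\varrho$.
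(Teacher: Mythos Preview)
Your argument is correct and rests on the same core ingredients as the paper's proof: Lemma~\ref{lemweakdiff} applied on an exhaustion of $\{\varrho>0\}$ by relatively compact sets, together with the equivalence of the $\lambda_B^{(n)}$- and $\mu_B^{\varrho,n,\beta}$-norms on such sets, to identify the abstract $L^2(\mu_B^{\varrho,n,\beta})$-limit of $\partial_i u_k$ with the genuine weak derivative.

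The organization differs slightly. The paper first computes $2\mathcal{E}(gf,g)-\mathcal{E}(g^2,f)$ explicitly for $g\in\mathcal{D}$, then passes to general $g\in D(\mathcal{E})\cap C^0_c(E)$ by invoking the square-root continuity estimate for energy measures from \cite[p.~123]{FOT94}, namely $\big|(\int f\,d\nu_{\langle g\rangle})^{1/2}-(\int f\,d\nu_{\langle g_k\rangle})^{1/2}\big|\le(2\Vert f\Vert_{\sup}\,\mathcal{E}(g-g_k,g-g_k))^{1/2}$, and only afterwards establishes the weak differentiability of $g$. You instead first prove the gradient representation \eqref{reprstar} on all of $D(\mathcal{E})$ and then use the $H^{1,2}$ product rule for bounded functions to compute the energy measure directly. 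Your route avoids the citation to \cite[p.~123]{FOT94} at the cost of appealing to the algebra property of $D(\mathcal{E})\cap L^\infty$ and the Sobolev product rule; both are standard, and the two arguments are of comparable length and difficulty.
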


\begin{proof}
First let $f,g\in\mathcal{D}$. We have
\begin{multline*}
2\,\mathcal{E}(gf,g)-\mathcal{E}(g^2,f)\\
=2\sum_{\varnothing\not=B\subset E}\,\int_{E_+(B)}\sum_{i\in B}\partial_i\big( gf\big)\,\partial_i g\,d\mu_{\scriptscriptstyle{B}}^{\varrho,n,\replaced{\beta}{s}}-\sum_{\varnothing\not=B\subset I}\int_{E_+(B)}\sum_{i\in B}\partial_i\big(g^2\big)\,\partial_i f\,d\mu_{\scriptscriptstyle{B}}^{\varrho,n,\replaced{\beta}{s}}\\
=2\sum_{\varnothing\not=B\subset I}\int_{E_+(B)}\sum_{i\in B}\Big(\partial_i gf+g\partial_i f\Big)\,\partial_i g\,d\mu_{\scriptscriptstyle{B}}^{\varrho,n,\replaced{\beta}{s}}-2\sum_{\varnothing\not=B\subset I}\int_{E_+(B)}\sum_{i\in B}g\,\partial_i g\,\partial_i f\,d\mu_{\scriptscriptstyle{B}}^{\varrho,n,\replaced{\beta}{s}}\\
=2\sum_{\varnothing\not=B\subset I}\int_{E_+(B)}\sum_{i\in B}\Big(\big(\partial_i g\big)^2\,f+g\,\partial_i g\,\partial_i f\Big)\,d\mu_{\scriptscriptstyle{B}}^{\varrho,n,\replaced{\beta}{s}}-2\sum_{\varnothing\not=B\subset I}\int_{E_+(B)}\sum_{i\in B}g\,\partial_i g\,\partial_i f\,d\mu_{\scriptscriptstyle{B}}^{\varrho,n,\replaced{\beta}{s}}\\
=2\sum_{\varnothing\not=B\subset I}\int_{E_+(B)}\sum_{i\in B}\big(\partial_i g\big)^2\,f\,d\mu_{\scriptscriptstyle{B}}^{\varrho,n,\replaced{\beta}{s}}=\int_{E}f\,\,2\sum_{\varnothing\not=B\subset I}\sum_{i\in B}\big(\partial_i g\big)^2\,d\mu_{\scriptscriptstyle{B}}^{\varrho,n,\replaced{\beta}{s}}.
\end{multline*}
This shows the assertion for $g \in \mathcal{D}$, since $\mathcal{D}$ is dense in $C^0_c(E)$ with respect to $\Vert \cdot \Vert_{\text{sup}}$.\\
Now let $g\in D(\mathcal{E}) \cap C^0_c(E)$ and $f\in \mathcal{D}$. Moreover, let $(g_k)_{k\in\mathbb{N}}\subset\mathcal{D}$ such that $g_k\to g$ in $\big(D(\mathcal{E}),\Vert\cdot\Vert_{\mathcal{E}_1}\big)$. By \cite[p.123]{FOT94} it holds
\[ \Bigg| \Big( \int_E f d\nu_{\langle g \rangle} \Big)^{\frac{1}{2}} -  \Big( \int_E f d\nu_{\langle g_k \rangle} \Big)^{\frac{1}{2}} \Bigg| \leq  \Big( \int_E f d\nu_{\langle g-g_k \rangle} \Big)^{\frac{1}{2}} \leq \sqrt{2 \Vert f \Vert_{\text{sup}} \ \mathcal{E}(g-g_k,g-g_k)}. \]
Hence
\begin{align*}
\int_E f d\nu_{\langle g \rangle} = \lim_{k \rightarrow \infty} \int_E f d\nu_{\langle g_k \rangle} &=\lim_{k\to\infty}\Big(2\,\mathcal{E}\big(f g_k,g_k\big)-\mathcal{E}\big(g_k^2,f\big)\Big) \\
&=\lim_{k\to\infty} \int_{E}f\,\,2\sum_{\varnothing\not=B\subset I}\sum_{i\in B}\big(\partial_i g_k\big)^2\,d\mu_{\scriptscriptstyle{B}}^{\varrho,n,\replaced{\beta}{s}}.
\end{align*}
It \replaced{remains}{rests} to show that $g$ possesses on each set $E_+(B) \setminus \{ \varrho=0 \}$ a square-integrable weak gradient (with respect to $\mu_B^{\varrho,n,\replaced{\beta}{s}}$) and that $\nabla^B g_k \to \nabla^B g$ in $L^2(E_+(B);\mu_B^{\varrho,n,\replaced{\beta}{s}})$ as $k \rightarrow \infty$.\\
Define $G_j:=[0,j)^n \cap (E \setminus \overline{B_{\frac{1}{j}}(\{ \varrho=0\}})$ and $G_j^B := E_+(B) \cap G_j$ for $j \in \mathbb{N}$. Then each $G_j$ fulfills the assumptions of Lemma \ref{lemweakdiff} and $G_j^B \uparrow E_+(B) \setminus \{ \varrho =0\}$ as $j \replaced{\uparrow}{\rightarrow} \infty$. This yields a weak gradient $\nabla^B g$ of $g$ on each set $G_j^B$ and therefore a weak gradient in $L^1_{\text{loc}}(E_+(B) \setminus \{\varrho=0\})$. Additionally, it holds
\[ \int_{E_+(B)} |\nabla^B g|^2 \ d\mu_{\scriptscriptstyle{B}}^{\varrho,n,\replaced{\beta}{s}} \leq \liminf_{j \rightarrow \infty} \int_{E_+(B)} \mathbbm{1}_{G_j} |\nabla^B g|^2 \ d\mu_{\scriptscriptstyle{B}}^{\varrho,n,\replaced{\beta}{s}} \leq \mathcal{E}^{\scriptscriptstyle{B}}(g,g), \]
since the last inequality holds for fixed $j \in \mathbb{N}$. This shows that $\nabla^B g \in L^2(E_+(B);\mu_{\scriptscriptstyle{B}}^{\varrho,n,\replaced{\beta}{s}})$ and furthermore, applying the above inequality to $g-g_k$ finishes the proof.
\end{proof}

\begin{proposition}\label{propmeasureform}
Suppose that Condition\deleted{s} \deleted{\ref{conddensity}, \ref{condHamza} and} \ref{condweakdiff} \replaced{is}{are} satisfied. Let $f,g\in\mathcal{D}\subset D(\mathcal{E})$. Then
\begin{align*}
\mathcal{E}\big(f,g\big)=\big\langle\nu_f,g\big\rangle:=\int_{E}g\,d\nu_f
\end{align*}
with 
\begin{align*}
\nu_f:=\sum_{B\subset I}\Big(-\Delta^B f-\Big(\nabla^B f,\nabla^B \ln(\varrho)\Big)\Big)\,\lambda^{\varrho,n,\replaced{\beta}{s}}_{\scriptscriptstyle{B}}-\frac{1}{\replaced{\beta}{s}}\sum_{B\subsetneq I}\Big(\nabla^{I\setminus {B}}f,e\Big)\,\lambda^{\varrho,n,\replaced{\beta}{s}}_{\scriptscriptstyle{B}}.
\end{align*}
\end{proposition}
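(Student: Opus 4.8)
The plan is to obtain this as a direct reorganization of the integration-by-parts identity already proved in Proposition~\ref{propibp}. Note first that by Remark~\ref{remcondequi}(iv) Condition~\ref{condweakdiff} implies both Conditions~\ref{conddensity} and~\ref{condHamza}, so the hypotheses of Proposition~\ref{propibp} are in force. That proposition yields, for $f,g\in\mathcal{D}$,
\[
\mathcal{E}(f,g)=\big(-Lf,g\big)_{L^2(E;\mu_{\varrho,n,\beta})},
\]
where $Lf=\sum_{B\subset I}\mathbbm{1}_{E_+(B)}L^Bf$ and $\mu_{\varrho,n,\beta}=\sum_{B\subset I}\lambda_B^{\varrho,n,\beta}$ with $\lambda_B^{\varrho,n,\beta}=\varrho\,\lambda_B^{n,\beta}=\mu_B^{\varrho,n,\beta}$. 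The entire task is then to rewrite the right-hand side as $\int_E g\,d\nu_f$.

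To do this I would insert both decompositions into the $L^2$-pairing and exchange the finite sums, obtaining $-\sum_{B\subset I}\sum_{B'\subset I}\int_E \mathbbm{1}_{E_+(B')}\,L^{B'}f\,g\,d\lambda_B^{\varrho,n,\beta}$. The key observation is that $\lambda_B^{\varrho,n,\beta}$ is carried by $\overline{E_+(B)}$ and assigns no mass to $\overline{E_+(B)}\setminus E_+(B)$, a lower-dimensional part of that face; consequently $\lambda_B^{\varrho,n,\beta}(E_+(B'))=0$ whenever $B'\neq B$, so the indicators diagonalize the double sum and only the terms $B'=B$ survive. This collapses the pairing to $\sum_{B\subset I}\int_{E_+(B)}(-L^Bf)\,g\,d\lambda_B^{\varrho,n,\beta}$. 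Substituting $-L^Bf=-\Delta^Bf-(\nabla^Bf,\nabla^B\ln\varrho)-\frac{1}{\beta}(\nabla^{I\setminus B}f,e)$ and separating the diffusive from the drift part, and using that $\nabla^{I\setminus I}f=0$ so that the drift contribution is absent for $B=I$ (whence that sum runs only over $B\subsetneq I$), produces exactly the two sums defining $\nu_f$. This identifies $\mathcal{E}(f,g)=\int_E g\,d\nu_f=\langle\nu_f,g\rangle$.

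It remains only to confirm that $\nu_f$ is a well-defined signed measure so that the pairing makes sense. For $f\in C_c^2(E)$ the diffusive density $-\Delta^Bf-(\nabla^Bf,\nabla^B\ln\varrho)$ becomes, after absorbing the weight $\varrho$ contained in $\lambda_B^{\varrho,n,\beta}$, equal to $-\varrho\,\Delta^Bf-(\nabla^Bf,\nabla^B\varrho)$, which is continuous on $E$ by Condition~\ref{condweakdiff}(ii); together with the boundedness of the drift density $-\frac{1}{\beta}\varrho\,(\nabla^{I\setminus B}f,e)$ and the compactness of $\supp f$ this makes each summand a finite signed measure, and hence $\nu_f$ a finite, compactly supported signed measure. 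I expect no genuine obstacle here: the statement is essentially a bookkeeping reformulation of Proposition~\ref{propibp}, and the only step requiring care is the null-set argument that allows the indicators in $L$ to be matched diagonally against the face measures $\lambda_B^{\varrho,n,\beta}$.
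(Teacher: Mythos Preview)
Your proposal is correct and takes essentially the same approach as the paper: both derive the representation from the integration by parts carried out in Proposition~\ref{propibp}. The paper's proof is in fact just the single sentence ``This representation is valid due to the integration by parts carried out in the proof of Proposition~\ref{propibp}'', pointing directly at the long display~(\ref{ibp}), whereas you route through the packaged identity $\mathcal{E}(f,g)=(-Lf,g)_{L^2}$ and then unpack it via the diagonalization $\lambda_B^{\varrho,n,\beta}(E_+(B'))=0$ for $B'\neq B$; this is the same content with slightly more bookkeeping made explicit.
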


\begin{proof}
This representation is valid due to the integration by parts carried out in the proof of Proposition \ref{propibp}.
\end{proof}

Next we recall the definition of a \emph{positive, continuous, additive functional} (see e.g.~\cite[Appendix A.2,~A.3]{FOT94}).

\begin{definition}[\emph{additive functional}]
A family $\big(A_t\big)_{t\ge 0}$ of extended real valued functions $A_t:\Omega\to\overline{\mathbb{R}}$, with $\overline{\mathbb{R}}:=\mathbb{R}\cup\{-\infty,\infty\}$, is called \emph{additive functional} \deleted{(used not in the strict sense, no change)}(\emph{A}F in abbreviation) of $\mathbf{M}$ if it satisfies the following conditions:
\begin{enumerate}
\item[(A1)]
$A_t$ is $\mathbf{F}_t$-measurable for each $t\ge 0$.
\item[(A2)]
There exists $\Lambda\in\mathbf{F}_{\scriptscriptstyle{\infty}}$ with $\mathbf{P}^{\varrho,n,\replaced{\beta}{s}}_x(\Lambda)=1$ for all $x\in E$, $\mathbf{\Theta}_t\Lambda\subset\Lambda$ for all $t>0$ and for each $\omega\in\Lambda$, $t\mapsto A_t(\omega)$ is right continuous and has left limit on $\big[0,\infty\big)$ satisfying
\begin{enumerate}
\item[(i)]
$A_0(\omega)=0$, and 
\item[(ii)]
$A_{t+\replaced{s}{\tau}}(\omega)=A_t(\omega)+A_{\replaced{s}{\tau}}(\mathbf{\Theta}_t\omega)$ for all $t,\replaced{s}{\tau}\ge 0$.
\end{enumerate}
\end{enumerate}
The set $\Lambda$ in the above is called a \emph{defining set} for $\big(A_t\big)_{t\ge 0}$. An $\big(A_t\big)_{t\ge 0}$ is said to be \emph{finite} if $\big|A_t(\omega)\big|<\infty$ for all $t\in[0,\infty)$ and each $\omega$ in a defining set.
An $\big(A_t\big)_{t\ge 0}$ is said to be \emph{continuous} if $[0,\infty)\ni t\mapsto A_t(\omega)\in\overline{\mathbb{R}}$ is continuous for each $\omega$ in a defining set. A continuous AF $\big(A_t\big)_{t\ge 0}$ consisting of a family of $[0,\infty]$-valued functions $A_t:\Omega\to[0,\infty]$ is called a \emph{positive continuous AF} (\emph{PCAF} in abbreviation). The set of all PCAFs we denote by $\mathbf{A}^+_{c}$. Moreover, we call an AF which is also a square integrable martingale with respect to $\big(\mathbf{F}_t\big)_{t\ge 0}$ a \emph{martingale AF} (\emph{MAF} in abbreviation).  
\end{definition}

\begin{remark}\label{remPACF}
Suppose that Conditions \ref{conddensity} and \ref{condHamza} are satisfied. Let $0\le g\in C^0\big(E\big)$ and $M\in\mathcal{B}\replaced{(E)}{_{\scriptscriptstyle{E}}}$. Then $A:=(A_t)_{t\ge 0}$ with
\begin{align*}
A_t(\omega):=\int_0 ^{t}g\big(\mathbf{X}_{\replaced{s}{\tau}}(\omega)\big)\,\mathbbm{1}_{M}\big(\mathbf{X}_{\replaced{s}{\tau}}(\omega)\big)\,d\replaced{s}{\tau},\quad\omega\in \Omega,
\end{align*}
is a PCAF, i.e., $A\in\mathbf{A}^+_{c}$. If $g$ is bounded, $A$ is even finite. \added{Compare e.g.~\cite[Exam.~5.1.1]{FOT94}.}
\end{remark}
\deleted{Proposition + proof}

Given $\mathbf{M}$ and a positive measure $\mu$ on $\big(E,\mathcal{B}\replaced{(E)}{_{\scriptscriptstyle{E}}}\big)$ we define a positive measure $\mathbf{P}_\mu$ on $(\mathbf{\Omega},\mathbf{F})$ by
\begin{align*}
\mathbf{P}_\mu(\Gamma):=\int_E\mathbf{P}^{\varrho,n,\replaced{\beta}{s}}_x(\Gamma)\,d\mu(x),\quad\Gamma\in\mathbf{F}.
\end{align*}
Now we want to assign to the measures $\nu_{\scriptscriptstyle{\langle g\rangle}}$ from Proposition \ref{propenergymeasure} and $\nu_f$ from Proposition \ref{propmeasureform} the corresponding \emph{additive functionals (AFs)}. In order to do this we make use of \cite[Theo.~5.1.3]{FOT94}. 

We consider the following classes of measures.

 \begin{definition}[\emph{smooth measure}, \emph{measure of finite energy integral}]
We denote by $\mathbf{S}$ the family of \emph{smooth measures}, i.e., all positive Borel measures $\mu$ on $\mathcal{B}\replaced{(E)}{_{\scriptscriptstyle{E}}}$ such that $\mu$ charges no set of \deleted{$\mathcal{E}$-}capacity zero and there exists an increasing sequence $\big(F_k\big)_{k\in\mathbb{N}}$ of closed sets in $E$ such that $\mu\big(F_k\big)<\infty$ for all $k\in\mathbb{N}$ and $\lim_{k\to\infty}\text{cap}_{\deleted{\mathcal{E}}}\big(K\setminus F_k\big)=0$ for any compact set $K\subset E$. Here $\text{cap}_{\deleted{\mathcal{E}}}\big(S\big)$ denotes the \deleted{$\mathcal{E}$-}capacity of a set $S\subset E$.  

A positive Radon measure $\mu$ on $\mathcal{B}\replaced{(E)}{_{\scriptscriptstyle{E}}}$ is said to be of \emph{finite energy integral} if
\begin{align*}
\int_E\big|f\big|\,d\mu\le C_4\sqrt{\mathcal{E}_1\big(f,f\big)},\quad f\in D(\mathcal{E})\cap C^0_c(E),
\end{align*}
for some $C_4\in(0,\infty)$. We denote by $\mathbf{S}_{\scriptscriptstyle{0}}$ the set of all positive Radon measures of finite energy integral\deleted{s}.
\end{definition}

\begin{remark}\label{remUpot}
A positive Radon measure $\mu$ on $\mathcal{B}\replaced{(E)}{_{\scriptscriptstyle{E}}}$ is of finite energy integral if and only if there exists for each $\alpha>0$ a unique $U_\alpha\mu\in D\big(\mathcal{E}\big)$ such that
\begin{align*}
\mathcal{E}_\alpha\big(U_\alpha\mu,f\big)=\int_E f\,d\mu\quad\mbox{for all }f\in D\big(\mathcal{E}\big)\cap C^0_c(E),
\end{align*}
where $\mathcal{E}_\alpha(\cdot,\cdot):={\mathcal{E}(\cdot,\cdot)}+{\alpha\,\big(\cdot,\cdot\big)_{L^2(E;\mu_{\varrho,n,\replaced{\beta}{s}})}}$.
\end{remark}

\begin{definition}[\emph{$\alpha$-potential}]
We call $U_\alpha\mu$ from Remark \ref{remUpot} an \emph{$\alpha$-potential} and denote by $\mathbf{S}_{\scriptscriptstyle{00}}$ the set of all finite $\mu\in \mathbf{S}_{\scriptscriptstyle{0}}$ such that $\Vert U_1\mu\Vert_{\scriptscriptstyle{L^\infty(E;\mu_{\varrho,n,\replaced{\beta}{s}})}}<\infty$. 
\end{definition}

\begin{remark}\label{remS00}
Let $\mu\in\mathbf{S}_{\scriptscriptstyle{00}}$ be a finite measure and $g:E\to [0,\infty)$ measurable and bounded. \added{Applying \cite[Theo.~2.2.1]{FOT94} we obtain that }\deleted{Then}${\mu_g}:=g\mu\in\mathbf{S}_{\scriptscriptstyle{00}}$.
\end{remark}
\deleted{lemma + proof}
Let $t>0$, $\mu\in \mathbf{S}$, $A\in\mathbf{A}^+_{c}$ and $f,h:E\to[0,\infty)$ measurable. Then we consider 
\begin{align}\label{equRevuz1}
\mathbb{E}_{h \mu_{\varrho,n,\replaced{\beta}{s}}}\Big(\big(fA\big)_t\Big):=\int_{\mathbf{\Omega}}\int_{0}^tf\big(\mathbf{X}_{\replaced{s}{\tau}}\big)\,dA_{\replaced{s}{\tau}}\,d\mathbf{P}_{h\cdot\mu_{\varrho,n,\replaced{\beta}{s}}}
\end{align}
and
\begin{multline}\label{equRevuz2}
\int_0^t\left\langle f\mu,p_{\replaced{s}{\tau}} h\right\rangle\,d\replaced{s}{\tau}:=\int_0^t\int_E \left(\replaced{p_sh}{\widetilde{T_\tau h}}\right)f\,d\mu\,d\replaced{s}{\tau}\\
=\int_0^t\int_E\int_{\mathbf{\Omega}}h\big(\mathbf{X}_{\replaced{s}{\tau}}(\omega)\big)\,d\mathbf{P}^{\varrho,n,\replaced{\beta}{s}}_x(\omega)\,f(x)\,d\mu(x)\,d\replaced{s}{\tau}\replaced{.}{,}
\end{multline}
\deleted{where for $\tau\ge 0$, $p_\tau h:=\widetilde{T_\tau h}$ denotes an $\mathcal{E}$-quasi continuous version of ${T_\tau h}$.}

\begin{definition}[\emph{Revuz correspondence}]\label{defRevuz}
A measure $\mu\in \mathbf{S}$ and a AF $A\in\mathbf{A}^+_{c}$ are said to be in \emph{Revuz correspondence} if and only if equality of (\ref{equRevuz1}) and (\ref{equRevuz2}) holds for all $f,h:E\to[0,\infty)$ measurable.
\end{definition}

\begin{remark}\label{Revuzbasic}
Suppose that Condition\deleted{s} \deleted{\ref{conddensity}, \ref{condHamza} and} \ref{condweakdiff} \replaced{is}{are} satisfied. Using the symmetry of $(p_t)_{t\ge 0}$ in (\ref{equRevuz2}) one easily checks that the measure $\mu_{\varrho,n,\replaced{\beta}{s}}$ is in Revuz correspondence with the PCAF $(A_t)_{t\ge 0}:=(t)_{t\ge 0}$.
\end{remark}

\begin{remark}\label{remRevuz}
Suppose that Condition\deleted{s} \deleted{\ref{conddensity}, \ref{condHamza} and} \ref{condweakdiff} \replaced{is}{are} satisfied. Then for $B\subset I$ the positive Radon measure $\mu_{\scriptscriptstyle{B}}:=\mu^{n,\varrho}_{\scriptscriptstyle{B}}:=\varrho\,\lambda^{\scriptscriptstyle{(n)}}_{\scriptscriptstyle{B}}$ is an element of $\mathbf{S}_{\scriptscriptstyle{00}}$ and, by using Remark \ref{Revuzbasic} together with \cite[Lemma 5.1.3]{FOT94}, in Revuz correspondence with the PCAF $\big(A_t^{\scriptscriptstyle{B}}\big)_{t\ge 0}$ given by 
\begin{align*}
A^{\scriptscriptstyle{B}}_t:=A^{n,\replaced{\beta}{s},\scriptscriptstyle{B}}_t:=\frac{1}{\replaced{\beta}{s}^{n-\# B}}\int_{0}^t\mathbbm{1}_{E_+(B)}\big(\mathbf{X}_{\replaced{s}{\tau}}\big)\,d\replaced{s}{\tau}.
\end{align*}
\end{remark}
\deleted{theorem}

\deleted{adapt the proof}
\begin{remark}\label{remS00ex}
Suppose Conditions \ref{conddensity} and \ref{condHamza}. Let $\eta\in C^0(E)$ such that ${\eta}\ge 0$. Again by applying \cite[Lemma 5.1.3]{FOT94} and using Remark \ref{remS00} we obtain:
\begin{enumerate}
\item[(i)]
If $\mu\in\mathbf{S}_{\scriptscriptstyle{00}}$ and $A_t=\int_0^t g\big(\mathbf{X}_{\replaced{s}{\tau}}\big)\mathbbm{1}_M\big(\mathbf{X}_{\replaced{s}{\tau}}\big)\,d\replaced{s}{\tau}$, $t\ge 0$, as in Remark \ref{remPACF}, are in Revuz correspondence, then
\begin{align*}
{\mu_{\eta}}:={\eta}\mu\quad\text{and}\quad{A}^{\eta}_t:=\int_0^t {\eta}\big(\mathbf{X}_{\replaced{s}{\tau}}\big)\,g\big(\mathbf{X}_{\replaced{s}{\tau}}\big)\mathbbm{1}_M\big(\mathbf{X}_{\replaced{s}{\tau}}\big)\,d\replaced{s}{\tau}
\end{align*}
are in Revuz correspondence.
\item[(ii)]
If, moreover, ${\eta}$ has compact support, then ${\mu_{\eta}}\in\mathbf{S}_{\scriptscriptstyle{00}}$.  
\end{enumerate}
\end{remark}
\deleted{deleted proposition with proof}

\begin{remark}\label{remS00plus}
If $\mu_1,\mu_2\in\mathbf{S}_{\scriptscriptstyle{00}}$ with Revuz corresponding AFs $A_1$, $A_2$, respectively. Then $\mu_1+\mu_2\in\mathbf{S}_{\scriptscriptstyle{00}}$ with Revuz corresponding AF $A$ given by $A:=A_1+A_2$.
\end{remark}

\begin{theorem}\label{theorep1}
Suppose that Condition\deleted{s} \deleted{\ref{conddensity}, \ref{condHamza} and} \ref{condweakdiff} \replaced{is}{are} satisfied. Let $f\in\mathcal{D}$. Then
\begin{align}\label{equdecomp}
f\big(\mathbf{X}_t\big)-f\big(\mathbf{X}_0\big)=\mathbf{M}_t^{[f]}+\mathbf{N}_t^{[f]}\quad\mathbf{P}^{\varrho,n,\replaced{\beta}{s}}_x-\text{a.s.~for q.e. }x\in E,
\end{align}
where $\mathbf{M}_t^{[f]}$ is a MAF with quadratic variation
\begin{align*}
\left\langle \mathbf{M}^{[f]} \right\rangle_t=2\sum_{\varnothing\not=B\subset I}\int_0^t\left|\nabla^B f\big(\mathbf{X}_{\replaced{s}{\tau}}\big)\right|^2\deleted{_{\scriptscriptstyle{\text{euc}}}}\,\mathbbm{1}_{{E_+(B)}}\big(\mathbf{X}_{\replaced{s}{\tau}}\big)\,d\replaced{s}{\tau}
\end{align*}
and
\begin{multline*}
\mathbf{N}^{[f]}_t=\int_0^t\Big(\sum_{B\subset I}\Big(\Big(\Delta^B f+\big(\nabla^B f,\nabla^B\ln(\varrho)\big)\deleted{_{\scriptscriptstyle{\text{euc}}}}\Big)\big(\mathbf{X}_{\replaced{s}{\tau}}\big)\,\mathbbm{1}_{{E_+(B)}}\big(\mathbf{X}_{\replaced{s}{\tau}}\big)\Big)\\
+\Big(\sum_{B\subset I}\frac{1}{s}\big(\nabla^{I\setminus B}f,e \big)(\mathbf{X}_{\replaced{s}{\tau}}\big)\,\mathbbm{1}_{{E_+(B)}}\big(\mathbf{X}_{\replaced{s}{\tau}}\big)\Big)\,d\replaced{s}{\tau}.
\end{multline*}
\end{theorem}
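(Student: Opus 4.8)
The plan is to obtain (\ref{equdecomp}) from the general Fukushima decomposition and then to identify the two parts by means of the Revuz correspondence established above. Since $f\in\mathcal{D}=C_c^2(E)\subset D(\mathcal{E})$ and $f$ is continuous (hence its own quasi-continuous version, so $\widetilde{f}=f$), I would apply \cite[Theo.~5.2.2]{FOT94} to get the decomposition $f(\mathbf{X}_t)-f(\mathbf{X}_0)=\mathbf{M}_t^{[f]}+\mathbf{N}_t^{[f]}$ $\mathbf{P}_x^{\varrho,n,\beta}$-a.s.\ for q.e.\ $x\in E$, where $\mathbf{M}^{[f]}$ is a martingale additive functional of finite energy and $\mathbf{N}^{[f]}$ is a continuous additive functional of zero energy. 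Because $(\mathcal{E},D(\mathcal{E}))$ is strongly local and conservative (Theorem \ref{theosumdiri}), both parts are continuous and no killing term appears, so it only remains to compute $\langle\mathbf{M}^{[f]}\rangle$ and $\mathbf{N}^{[f]}$ explicitly.

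For the quadratic variation I would use that the Revuz measure of $\langle\mathbf{M}^{[f]}\rangle$ is exactly the energy measure $\nu_{\scriptscriptstyle{\langle f\rangle}}$, see \cite[Theo.~5.2.3]{FOT94}. By Proposition \ref{propenergymeasure} we have $\nu_{\scriptscriptstyle{\langle f\rangle}}=2\sum_{\varnothing\not=B\subset I}|\nabla^B f|^2\,\mu_{\scriptscriptstyle{B}}^{\varrho,n,\beta}$. Each summand is a nonnegative, continuous, compactly supported density $|\nabla^B f|^2$ times $\mu_{\scriptscriptstyle{B}}^{\varrho,n,\beta}=\beta^{n-\#B}\varrho\,\lambda_{\scriptscriptstyle{B}}^{\scriptscriptstyle{(n)}}$. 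Combining the basic correspondence of Remark \ref{remRevuz} (where $\varrho\,\lambda_{\scriptscriptstyle{B}}^{\scriptscriptstyle{(n)}}$ corresponds to $\frac{1}{\beta^{n-\#B}}\int_0^{\cdot}\mathbbm{1}_{E_+(B)}(\mathbf{X}_s)\,ds$) with the multiplication rule of Remark \ref{remS00ex}(i) and the additivity of Remark \ref{remS00plus}, the factor $\beta^{n-\#B}$ cancels and I read off $\langle\mathbf{M}^{[f]}\rangle_t=2\sum_{\varnothing\not=B\subset I}\int_0^t|\nabla^B f(\mathbf{X}_s)|^2\,\mathbbm{1}_{E_+(B)}(\mathbf{X}_s)\,ds$, as claimed.

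For the drift part I would invoke Proposition \ref{propmeasureform}, which shows $\mathcal{E}(f,g)=\int_E g\,d\nu_f$ with the explicit signed measure $\nu_f$. Each of its summands is again a continuous compactly supported density times some $\mu_{\scriptscriptstyle{B}}^{\varrho,n,\beta}$, so by Remark \ref{remS00ex} the positive and negative parts of $\nu_f$ are smooth measures of finite energy; hence $\mathbf{N}^{[f]}$ is of bounded variation and, by the characterization of the zero-energy part through its associated measure (\cite[Theo.~5.4.2]{FOT94}), $\mathbf{N}^{[f]}_t=-A_t^{\nu_f}$. Computing $A_t^{\nu_f}$ via the very same Revuz bookkeeping (Remarks \ref{remRevuz}, \ref{remS00ex}, \ref{remS00plus}) and flipping the overall sign reproduces exactly the stated expression for $\mathbf{N}^{[f]}$, the term $\frac{1}{\beta}(\nabla^{I\setminus B}f,e)$ surviving only for $B\subsetneq I$ since $\nabla^{I\setminus I}f=0$.

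The main obstacle I anticipate is the identification of $\mathbf{N}^{[f]}$, and within it two points of care: first, verifying that $\nu_f$ really lies in the class to which \cite[Theo.~5.4.2]{FOT94} applies, which hinges on the integrability of $\nabla^B\ln\varrho$ near $\{\varrho=0\}$ granted by Condition \ref{condweakdiff}(i) (equivalently $\partial_i\ln\varrho\in L^2(\mu_{\varrho,n,\beta})$), so that each density is $\mu_{\scriptscriptstyle{B}}^{\varrho,n,\beta}$-integrable and the resulting measures are smooth of finite energy; and second, consistently tracking the sign convention $\mathbf{N}^{[f]}=-A^{\nu_f}$ together with the powers $\beta^{n-\#B}$ through the Revuz correspondence, which is where an error would most easily creep in.
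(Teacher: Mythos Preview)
Your proposal is correct and follows essentially the same route as the paper. The only cosmetic difference is that the paper invokes \cite[Theo.~5.2.5]{FOT94} as a single packaged result (after verifying $\nu_{\scriptscriptstyle{\langle f\rangle}}\in\mathbf{S}_{00}$ and $\nu_f^{\pm}\in\mathbf{S}_{00}$ via Remarks \ref{remRevuz}, \ref{remS00ex}, \ref{remS00plus}), whereas you unpack the same argument into the general Fukushima decomposition \cite[Theo.~5.2.2]{FOT94}, the identification of the Revuz measure of $\langle\mathbf{M}^{[f]}\rangle$ with the energy measure \cite[Theo.~5.2.3]{FOT94}, and the bounded-variation characterization of $\mathbf{N}^{[f]}$; the Revuz bookkeeping and the sign convention $\mathbf{N}^{[f]}=-A^{\nu_f}$ are handled identically.
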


\begin{remark}
Note that the decomposition (\ref{equdecomp}) is valid $\mathbf{P}^{\varrho,n,\replaced{\beta}{s}}_x-\text{a.s.~for q.e. }x\in E$. This is weaker then the statement in \cite[Theo.~5.2.5]{FOT94} where the decomposition holds $\mathbf{P}^{\varrho,n,\replaced{\beta}{s}}_x-\text{a.s.~for each }x\in E$. This is caused by the fact that in our setting we do not know if the \emph{absolute continuity condition} is fulfilled.  
\end{remark}

\begin{proof}
We have to check the assumptions of \cite[Theo.~5.2.5]{FOT94}. $f\in\mathcal{D}\subset D\big(\mathcal{E}\big)$ is clearly bounded and continuous. The measure $\nu_{\scriptscriptstyle{\langle f\rangle}}\in \mathbf{S}_{\scriptscriptstyle{00}}$ due to Proposition \ref{propenergymeasure}, Remarks \ref{remRevuz}, \ref{remS00ex}(ii) and \ref{remS00plus} applied inductively. In addition, these results yield that $\nu_{\scriptscriptstyle{\langle f\rangle}}$ is in Revuz correspondence with the PCAF
\begin{align*}
2\sum_{\varnothing\not=B\subset I}\int_0^t\sum_{i\in B}\big(\partial_if\big)^2\big(\mathbf{X}_{\replaced{s}{\tau}}\big)\,\mathbbm{1}_{{E_+(B)}}\big(\mathbf{X}_{\replaced{s}{\tau}}\big)\,d\replaced{s}{\tau}.
\end{align*}
By Proposition \ref{propmeasureform} 
\begin{align*}
\mathcal{E}\big(f,g\big)=\big\langle\nu_f,g\big\rangle=\int_{E}g\,d\nu_f
\end{align*}
with 
\begin{align*}
\nu_f=\sum_{B\subset I}\Bigg(\sum_{i\in B}\Big(-\partial^2_i f-\partial_i f\partial_i \ln(\varrho)\Big)\Bigg)\,\lambda^{\varrho,n,\replaced{\beta}{s}}_{\scriptscriptstyle{B}}-\frac{1}{\replaced{\beta}{s}}\sum_{B\subset I}\Big(\sum_{i\in I\setminus B}\partial_i f\Big)\,\lambda^{\varrho,n,\replaced{\beta}{s}}_{\scriptscriptstyle{B}}
\end{align*}
for all $f,g\in\mathcal{D}$. We can split the densities contained in $\nu_{f}$ into positive and negative part. This yields two positive Radon measures $\nu_{f}^{+}$ and $\nu_{f}^{-}$ such that $\nu_{f}=\nu_{f}^{+}-\nu_{f}^{-}$. These measures belong to $\mathbf{S}_{\scriptscriptstyle{00}}$ by Remarks \ref{remRevuz}, \ref{remS00ex} and \ref{remS00plus}. We can calculate the associated PCAFs $A^{\scriptscriptstyle{+}}$ and $A^{\scriptscriptstyle{-}}$ in the same way like in the case of $\nu_{\scriptscriptstyle{\langle f\rangle}}$. By \cite[Theo.~5.2.5]{FOT94} $\mathbf{N}^{[f]}_t=-A^{\scriptscriptstyle{+}}+A^{\scriptscriptstyle{-}}$ and we obtain that
\begin{multline*}
\mathbf{N}^{[f]}_t=\int_0^t\Big(\sum_{B\subset I}\Big(\sum_{i\in B}\Big(\partial^2_i f+\partial_i f\partial_i \ln(\varrho)\Big)\Big)\big(\mathbf{X}_{\replaced{s}{\tau}}\big)\,\mathbbm{1}_{{E_+(B)}}\big(\mathbf{X}_{\replaced{s}{\tau}}\big)\Big)\\
+\Big(\sum_{B\subset I}\Big(\frac{1}{\replaced{\beta}{s}}\sum_{i\in I\setminus B}\partial_i f\Big)(\mathbf{X}_{\replaced{s}{\tau}}\big)\,\mathbbm{1}_{{E_+(B)}}\big(\mathbf{X}_{\replaced{s}{\tau}}\big)\Big)\,d\replaced{s}{\tau}.
\end{multline*}
\end{proof}

\begin{corollary}\label{coroskoro}
Let $j\in I$. We denote by $\pi_j:\mathbb{R}^n\to\mathbb{R}$, $x\mapsto x_j$, the projection on the $j$-th coordinate. Then under the assumptions of Theorem \ref{theorep1} the coordinate processes $\big(\mathbf{X}_t^j\big)_{t\ge 0}:=\big(\pi_j(\mathbf{X}_{t})\big)_{t\ge 0}$, $1\le j\le n$, corresponding to $\mathbf{M}$ is given by
\begin{multline}\label{rep}
\mathbf{X}_{t}^j-\mathbf{X}_0^j=\sqrt{2}\,\int_0^t\mathbbm{1}_{\mathring{E}}\big(\mathbf{X}_{\replaced{s}{\tau}}\big)\,dB^j_{\replaced{s}{\tau}}+\int_{0}^{t}\partial_j\ln(\varrho)\big(\mathbf{X}_{\replaced{s}{\tau}}\big)\mathbbm{1}_{\mathring{E}}\big(\mathbf{X}_{\replaced{s}{\tau}}\big)\,d\replaced{s}{\tau}\\
+\sum_{\varnothing\not=B\subsetneq I}\left\{\begin{array}{ll}
  \sqrt{2}\,\int_0^t\mathbbm{1}_{{E_+(B)}}\big(\mathbf{X}_{\replaced{s}{\tau}}\big)\,dB^j_{\replaced{s}{\tau}}+\int_0^{t} \partial_j\ln(\varrho)\big(\mathbf{X}_{\replaced{s}{\tau}}\big)\mathbbm{1}_{{E_+(B)}}\big(\mathbf{X}_{\replaced{s}{\tau}}\big)\,d\replaced{s}{\tau}, & \text{if }j\in B\\
  \frac{1}{\replaced{\beta}{s}}\,\int_0^{t}\mathbbm{1}_{{E_+(B)}}\big(\mathbf{X}_{\replaced{s}{\tau}}\big)\,d\replaced{s}{\tau}, & \text{if }j\in I\setminus B
  \end{array}\right.\\
+\frac{1}{\replaced{\beta}{s}}\int_0^{t}\mathbbm{1}_{\{(0,\ldots,0)\}}\big(\mathbf{X}_{\replaced{s}{\tau}}\big)\,d\replaced{s}{\tau}\quad\added{\mathbf{P}^{\varrho,n,\beta}_x-\text{a.s.}\text{ for q.e.~}x\in E},
\end{multline}
where $(B^j_t)_{t\ge 0}$ is a one dimensional standard Brownian motion. Moreover, $(B^j_t)_{t\ge 0}$ and $(B^i_t)_{t\ge 0}$ are independent for $i,j\in I$ with $i\not=j$.
\end{corollary}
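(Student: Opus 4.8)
The plan is to deduce the corollary from the Fukushima-type decomposition of Theorem \ref{theorep1} by specializing $f$ to the coordinate projections $\pi_j$ and then identifying the martingale and bounded-variation parts explicitly. The immediate difficulty is that $\pi_j\notin\mathcal{D}=C_c^2(E)$, so I first localize: choose cutoff functions $f_N\in\mathcal{D}$ with $f_N=\pi_j$ on $K_N=[0,N]^n$ and apply Theorem \ref{theorep1} to each $f_N$. On the stochastic interval before the process exits $K_N$ all first and second derivatives of $f_N$ agree with those of $\pi_j$ (namely $\partial_j\pi_j=1$ and all other derivatives vanish), so the decompositions of $f_N(\mathbf{X}_t)$ coincide there. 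Since $\mathbf{M}$ is conservative (infinite lifetime, Theorem \ref{theoprocess}) the exit times of $K_N$ increase to $\infty$ q.e., and patching the local decompositions yields $\mathbf{X}_t^j-\mathbf{X}_0^j=\mathbf{M}_t^{[\pi_j]}+\mathbf{N}_t^{[\pi_j]}$ for q.e.\ starting point.

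Next I would identify the bounded-variation part. Substituting $f=\pi_j$ into the formula for $\mathbf{N}^{[f]}$ from Theorem \ref{theorep1} and using $\Delta^B\pi_j=0$, $(\nabla^B\pi_j,\nabla^B\ln\varrho)=\partial_j\ln\varrho\,\mathbbm{1}_{\{j\in B\}}$ and $\frac{1}{\beta}(\nabla^{I\setminus B}\pi_j,e)=\frac{1}{\beta}\mathbbm{1}_{\{j\in I\setminus B\}}$ gives
\[ \mathbf{N}_t^{[\pi_j]}=\int_0^t\sum_{B\ni j}\partial_j\ln\varrho(\mathbf{X}_s)\,\mathbbm{1}_{E_+(B)}(\mathbf{X}_s)\,ds+\frac{1}{\beta}\int_0^t\sum_{B\not\ni j}\mathbbm{1}_{E_+(B)}(\mathbf{X}_s)\,ds. \]
Separating the terms $B=I$ and $B=\varnothing$ from the proper nonempty subsets reproduces the drift on $\mathring{E}=E_+(I)$, the per-$B$ drift terms for $\varnothing\not=B\subsetneq I$, and the constant push $\frac{1}{\beta}\mathbbm{1}_{\{(0,\ldots,0)\}}$ at the origin $E_+(\varnothing)$, exactly as in (\ref{rep}).

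For the martingale part, Theorem \ref{theorep1} shows $\mathbf{M}^{[\pi_j]}$ is a continuous MAF with $\langle\mathbf{M}^{[\pi_j]}\rangle_t=2\sum_{B\ni j}\int_0^t\mathbbm{1}_{E_+(B)}(\mathbf{X}_s)\,ds=2\int_0^t\mathbbm{1}_{(0,\infty)}(\mathbf{X}_s^j)\,ds$, since the sets $E_+(B)$ with $j\in B$ union to $\{x_j>0\}$ and $|\nabla^B\pi_j|^2=\mathbbm{1}_{\{j\in B\}}$. By polarizing the energy measure of Proposition \ref{propenergymeasure} and using $(\nabla^B\pi_i,\nabla^B\pi_j)=0$ for $i\not=j$, the mutual brackets vanish: $\langle\mathbf{M}^{[\pi_i]},\mathbf{M}^{[\pi_j]}\rangle=0$ for $i\not=j$. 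Invoking the representation theorem for continuous local martingales with absolutely continuous bracket, on a possibly enlarged probability space there exist one-dimensional standard Brownian motions $(B^j_t)_{t\ge0}$ with $\mathbf{M}_t^{[\pi_j]}=\sqrt{2}\int_0^t\mathbbm{1}_{(0,\infty)}(\mathbf{X}_s^j)\,dB_s^j$, mutually independent precisely because the cross-brackets vanish; splitting $\mathbbm{1}_{(0,\infty)}(\mathbf{X}_s^j)$ over the $E_+(B)$ with $j\in B$ yields the diffusive terms in (\ref{rep}).

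The main obstacle I expect is the martingale-representation step: because the integrand $\mathbbm{1}_{(0,\infty)}(\mathbf{X}_s^j)$ degenerates (it vanishes exactly on the time the $j$-th coordinate rests at the boundary), one cannot simply invert the stochastic integral, so the driving Brownian motions must be produced by enlarging the probability space while simultaneously guaranteeing their joint independence from the vanishing cross-variations. The localization and the algebraic identification of the drift and diffusion coefficients, by contrast, are routine once Theorem \ref{theorep1} and Proposition \ref{propenergymeasure} are in hand.
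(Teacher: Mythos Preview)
Your proposal is correct and follows essentially the same route as the paper: localize $\pi_j$ by cutoff functions in $\mathcal{D}$, apply Theorem~\ref{theorep1} up to the exit times of $[0,N]^n$ (which increase to $\infty$ by conservativeness), read off the drift by direct substitution, compute the (cross) quadratic variations from the energy measures, and invoke a martingale representation theorem (Kallenberg, possibly enlarging the probability space) to produce the independent driving Brownian motions. The only cosmetic difference is that you combine $\sum_{B\ni j}\mathbbm{1}_{E_+(B)}=\mathbbm{1}_{(0,\infty)}(\mathbf{X}^j)$ before representing the martingale, whereas the paper keeps the $E_+(B)$ pieces separate; your phrasing is arguably cleaner.
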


\begin{proof}
We consider
\begin{align*}
\pi_j^k(x):=
\left\{\begin{array}{ll}
  x_j, & \text{if }x\in [0,k+1)^n\\
  0, & \text{if }x\in [k+2,\infty)^n
  \end{array}\right., \quad 1 \le j \le n, \, k \in\mathbb{N},\text{ such that }\pi_j^k\in\mathcal{D}.
\end{align*}
Furthermore, we define
\begin{align*}
\tau_k:=\inf\big\{t \ge 0 \,|\, \mathbf{X}_t \not\in [0,k]^n\big\}, \quad k \in {\mathbb N}. 
\end{align*}
$(\tau_k)_{k\in\mathbb{N}}$ is a sequence of stopping times with $\tau_k\uparrow\infty$ as $k\to\infty$. Now using the decomposition (\ref{equdecomp}) we obtain for $k\in\mathbb{N}$ and $j\in I$ the representation
\begin{multline*}
\mathbf{X}_{t\wedge\tau_k}^j-\mathbf{X}_0^j=\pi_j^k\big(\mathbf{X}_{t\wedge\tau_k}\big)-\pi_j^k\big(\mathbf{X}_0\big)=\mathbf{M}_{t\wedge\tau_k}^{[\pi_j^k]}+\mathbf{N}_{t\wedge\tau_k}^{[\pi_j^k]}\\
=\mathbf{M}_{t\wedge\tau_k}^{[\pi^k_j]}+\int_{0}^{t\wedge\tau_k}\partial_j\ln(\varrho)\big(\mathbf{X}_{\replaced{s}{\tau}}\big)\mathbbm{1}_{\mathring{E}}\big(\mathbf{X}_{\replaced{s}{\tau}}\big)\,d\replaced{s}{\tau}\\
+\sum_{\varnothing\not=B\subsetneq I}\left\{\begin{array}{ll}
  \int_0^{t\wedge\tau_k} \partial_j\varrho\big(\mathbf{X}_{\replaced{s}{\tau}}\big)\mathbbm{1}_{{E_+(B)}}\big(\mathbf{X}_{\replaced{s}{\tau}}\big)\,d\replaced{s}{\tau}, & \text{if }j\in B\\
  \frac{1}{\replaced{\beta}{s}}\,\int_0^{t\wedge\tau_k}\mathbbm{1}_{{E_+(B)}}\big(\mathbf{X}_{\replaced{s}{\tau}}\big)\,d\replaced{s}{\tau}, & \text{if }j\in I\setminus B
  \end{array}\right.\\
+\frac{1}{\replaced{\beta}{s}}\int_0^{t\wedge\tau_k}\mathbbm{1}_{\{(0,\ldots,0)\}}\big(\mathbf{X}_{\replaced{s}{\tau}}\big)\,d\replaced{s}{\tau}\quad\added{\mathbf{P}^{\varrho,n,\beta}_x-\text{a.s.~for q.e.~}x\in E}
\end{multline*}
 Additionally we have the cross variation
\begin{align*}
\left\langle \mathbf{M}^{[\pi^k_i]},\mathbf{M}^{[\pi^k_j]}\right\rangle_{t\wedge\tau_k}=\delta_{ij}\,\left\langle \mathbf{M}^{[\pi^k_j]}\right\rangle_{t\wedge\tau_k}=\delta_{ij}\,\sum_{\varnothing\not=B\subset I}2\,\int_0^{t\wedge\tau_k}\mathbbm{1}_{{E_+(B)}}\big(\mathbf{X}_{\replaced{s}{\tau}}\big)\,d\replaced{s}{\tau}\quad \added{\mathbf{P}^{\varrho,n,\beta}_x-\text{a.s., }x\in E}.
\end{align*}

For $k\in\mathbb{N}$ large enough $\mathbf{M}_{t\wedge\tau_k}^{[\pi^k_j]}=\mathbf{M}_{t}^{[\pi_j]}$ is a continuous, local martingale and moreover, for fixed $\varnothing\not=B\subset I$ and $i,j\in B$ we have that
\begin{align*}
\left\langle \mathbf{M}^{[\pi_i]},\mathbf{M}^{[\pi_j]}\right\rangle_{t}=\delta_{ij}\,2\,\int_0^{t}\mathbbm{1}_{{E_+(B)}}\big(\mathbf{X}_{\replaced{s}{\tau}}\big)\,d\replaced{s}{\tau}=\int_0^{t}\left(\delta_{ij}\,\sqrt{2}\,\mathbbm{1}_{{E_+(B)}}\big(\mathbf{X}_{\replaced{s}{\tau}}\big)\right)^2\,d\replaced{s}{\tau}\quad\added{\mathbf{P}^{\varrho,n,\beta}_x-\text{a.s.},~x\in E}.
\end{align*}
Thus for $t\ge 0$ and $j\in B$ we obtain \added{(perhaps after enlarging the probability space)} by using \cite[Theo.~18.12]{Ka02} that
\begin{align*}
\mathbf{M}^{[\pi_j]}_t=\sqrt{2}\int_0^t\mathbbm{1}_{{E_+(B)}}\big(\mathbf{X}_{\replaced{s}{\tau}}\big)\,dB^j_{\replaced{s}{\tau}}\quad\added{\mathbf{P}^{\varrho,n,\beta}_x-}\text{a.s.}\added{, x\in E}, 
\end{align*}
where $(B^j_t)_{t\ge 0}$ is a one dimensional standard Brownian motion. Moreover, $(B^j_t)_{t\ge 0}$ and $(B^i_t)_{t\ge 0}$ are independent for $i,j\in I$ with $i\not=j$. 
\end{proof}

\section{Ergodicity and occupation time}\label{sectergo} 

\begin{definition}[\emph{part of a Dirichlet form}] Let $(\mathcal{G}, D(\mathcal{G}))$ be an arbitrary regular Dirichlet form on some locally compact separable metric space $X$, $m$ a positive Radon measure on $X$ with full topological support and $G$ an open subset of $X$. Then we define by $\mathcal{G}^G(f,g):=\mathcal{G}(f,g)$ for $f,g \in \{ f \in D(\mathcal{G}) | \ \tilde{f}=0 \text{ q.e. on } X \setminus G \}$ the \emph{part of the form $(\mathcal{G},D(\mathcal{G}))$ on $G$}, where $\tilde{f}$ denotes a quasi-continuous version of $f$. Indeed, this defines a regular Dirichlet form on $L^2(G; m)$ and for any special standard core $\mathcal{C}$ of $(\mathcal{G}, D(\mathcal{G}))$, $\mathcal{C}_G:= \{ f \in \mathcal{C} | \text{ supp}[f] \subset G \}$ is a core of $(\mathcal{G},D(\mathcal{G}))$ (see \cite[Theorem 4.4.3]{FOT94}).
\end{definition}

Throughout this section, suppose Condition\deleted{s} \deleted{\ref{conddensity}, \ref{condHamza} and} \ref{condweakdiff} \replaced{is}{are} satisfied and denote by 
\begin{align*}
\mathbf{M}:=\mathbf{M}^{\varrho,n,\replaced{\beta}{s}}:=\big(\mathbf{\Omega},\mathbf{F},(\mathbf{F}_t)_{t\ge 0},(\mathbf{X}_t)_{t\ge 0},(\mathbf{\Theta}_t)_{t\ge 0},(\mathbf{P}^{\varrho,n,\replaced{\beta}{s}}_x)_{x\in E}\big)
\end{align*}
the process constructed in Theorem \ref{theoprocess}. Furthermore, for an open subset $G$ of $E$
\begin{align*} 
\mathbf{M}^G:=\big(\mathbf{\Omega},\mathbf{F},(\mathbf{F}_t)_{t\ge 0},(\mathbf{X}^0_t)_{t\ge 0},(\mathbf{\Theta}_t)_{t\ge 0},(\mathbf{P}^{\varrho,n,\replaced{\beta}{s}}_x)_{x\in G_{\Delta}}\big)
\end{align*}
is called the part of the process $\mathbf{M}$ on $G$, where $\mathbf{X}^0_t(\omega)$ results from $\mathbf{X}_t(\omega)$ by killing the path upon leaving $G$ for $\omega \in \mathbf{\Omega}$. Here $G_{\Delta}:=G\cup\{\Delta\}$, where $\Delta$ denotes the cemetery, see e.g.~\cite[Chap. A.2]{FOT94}. By \cite[Theorem 4.4.2]{FOT94} the process $\mathbf{M}^G$ is associated to $(\mathcal{E}^G,D(\mathcal{E}^G))$. \\

In (\ref{form1}) we defined the form $\mathcal{E}_{\scriptscriptstyle{B}}$ for $\varnothing \neq B \subset I$ and functions $f,g \in \mathcal{D}$. We can extend the definition to functions in $f,g \in C_c^2(\overline{E_{\scriptscriptstyle{+}}(B)})$. Denote the closure of the pre-Dirichlet form $(\mathcal{E}_{\scriptscriptstyle{B}},C_c^2(\overline{E_{\scriptscriptstyle{+}}(B)}))$ on $L^2(\overline{E_{\scriptscriptstyle{+}}(B)};\mu_{\scriptscriptstyle{B}}^{\varrho,n,\replaced{\beta}{s}})$ by $(\mathcal{E}_{\scriptscriptstyle{B}},D(\mathcal{E}_{\scriptscriptstyle{B}}))$ and by $(T_t^{\scriptscriptstyle{B}})_{t >0}$ the corresponding semigroup. It is known that this yields for each $B$ a strongly local, recurrent, regular Dirichlet form.\\
Let $A_i$, $i \in \mathcal{I}$, be the connected components of $\tilde{E}:=E \setminus \{\varrho=0\}$ for some index set $\mathcal{I}$ and $A_i^B:= A_i \cap \overline{E_{\scriptscriptstyle{+}}(B)}$. We suppose an additional condition:

\begin{condition} \label{condcomp} $\mathcal{I}$ is finite, each $A_i$, $i \in \mathcal{I}$, is convex and the density $\varrho$ fulfills
\[ \int_{B_r (\{ \varrho =0\})} d\mu_{\varrho,n,\replaced{\beta}{s}} \leq C r^2 \text{ as } r \rightarrow 0 \]
with a constant $C < \infty$.
\end{condition}

For the following lemma we need the notion of a strongly regular Dirichlet form (see also \cite[Section 4.2]{Stu94} and \cite{Stu95}):

\begin{definition}[\emph{strong regularity}] A regular Dirichlet form $(\mathcal{G},D(\mathcal{G}))$ on $L^2(E;\mu_{\varrho,n,\replaced{\beta}{s}})$ is called \emph{strongly regular}, if the topology induced by the intrinsic metric
\[ d(x,y):= \sup \{ f(x)-f(y) | \ f \in D(\mathcal{G}) \cap C^0_c(E) \text{ with } \frac{1}{2} \nu_{\scriptscriptstyle{\langle f\rangle}} \leq \mu_{\varrho,n,\replaced{\beta}{s}} \}, \ \ x,y \in E, \]
coincides with the topology generated by the euclidean metric on $E$. Here $\frac{1}{2} \nu_{\scriptscriptstyle{\langle f\rangle}} \leq \mu_{\varrho,n,\replaced{\beta}{s}}$ means that the energy measure of $f$ is absolutely continuous w.r.t.~$\mu_{\varrho,n,\replaced{\beta}{s}}$ and its Radon-Nikodym derivative is almost everywhere less or equal than two.
\end{definition}

\begin{lemma}\label{lemcap} 
\begin{enumerate} \item[(i)] $\{ \varrho =0 \}$ is of \deleted{$\mathcal{E}$-}capacity zero and $\{ \varrho =0 \} \cap \overline{E_{\scriptscriptstyle{+}}(B)}$ is of \deleted{$\mathcal{E}_B$-}capacity zero for every $\varnothing \neq B \subset I$.
\item[(ii)] $A_i$ is open and $T_t$-invariant for every $i \in \mathcal{I}$ .
\item[(iii)] $A_i^B$ is open in $\overline{E_{\scriptscriptstyle{+}}(B)}$ and $T_t^B$-invariant for every $i \in \mathcal{I}$ and $\varnothing \neq B \subset I$.
\end{enumerate}
\end{lemma}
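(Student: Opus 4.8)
The plan is to handle the three parts in order, with part (i) carrying the real weight and parts (ii)--(iii) following from it together with the diffusion property.

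For part (i) I would exploit the quadratic (codimension-two) volume decay of Condition \ref{condcomp} by a \emph{logarithmic, multi-scale} cut-off; a single-scale linear cut-off is not enough, since it yields only a bounded, not a vanishing, energy. Write $N:=\{\varrho=0\}$, which is closed because $\varrho\in C^1(E)$ is continuous, and let $d(x):=\operatorname{dist}(x,N)$ be the Euclidean distance to $N$, so that $d$ is $1$-Lipschitz and $|\nabla^B d|\le 1$ a.e.\ on $E_+(B)$ for every $B$. For $0<\varepsilon<\delta$ put $u_{\varepsilon,\delta}:=F_{\varepsilon,\delta}(d)$, where $F_{\varepsilon,\delta}(t)=1$ for $t\le\varepsilon$, $F_{\varepsilon,\delta}(t)=\log(\delta/t)/\log(\delta/\varepsilon)$ for $\varepsilon<t<\delta$, and $F_{\varepsilon,\delta}(t)=0$ for $t\ge\delta$. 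Then $u_{\varepsilon,\delta}$ is bounded and Lipschitz, equals $1$ on the neighbourhood $\{d<\varepsilon\}$ of $N$, and (after truncation at infinity, using finiteness of $\mu_{\varrho,n,\beta}$ and strong regularity to guarantee admissibility) lies in $D(\mathcal{E})$. By Proposition \ref{propenergymeasure} together with $|\nabla^B u_{\varepsilon,\delta}|\le|F'_{\varepsilon,\delta}(d)|$ one gets $\mathcal{E}(u_{\varepsilon,\delta},u_{\varepsilon,\delta})\le\int_E|F'_{\varepsilon,\delta}(d)|^2\,d\mu_{\varrho,n,\beta}=\frac{1}{(\log(\delta/\varepsilon))^2}\int_{\{\varepsilon<d<\delta\}}d^{-2}\,d\mu_{\varrho,n,\beta}$. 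Decomposing $\{\varepsilon<d<\delta\}$ into dyadic shells $\{2^{-k-1}\delta\le d<2^{-k}\delta\}$ and using $\mu_{\varrho,n,\beta}(\{d<r\})\le Cr^2$ bounds each shell's contribution by a fixed constant, so the integral is $O(\log(\delta/\varepsilon))$ and $\mathcal{E}(u_{\varepsilon,\delta},u_{\varepsilon,\delta})=O(1/\log(\delta/\varepsilon))\to 0$ as $\varepsilon\downarrow 0$, while $\|u_{\varepsilon,\delta}\|_{L^2}^2\le\mu_{\varrho,n,\beta}(\{d<\delta\})\le C\delta^2$. Hence $\operatorname{cap}(N)\le C\delta^2$ for every small $\delta$, giving $\operatorname{cap}(N)=0$. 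The second claim runs identically on $\overline{E_+(B)}$, since $\mu_B^{\varrho,n,\beta}\le\mu_{\varrho,n,\beta}$ yields the same shell estimate, whence $N\cap\overline{E_+(B)}$ is of $\mathcal{E}_B$-capacity zero.

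For part (ii), $A_i$ is open because $\tilde E=E\setminus N$ is open and $E$ is locally connected, so its connected components are open. For invariance I would argue probabilistically with the diffusion $\mathbf{M}$ of Theorem \ref{theoprocess}: by part (i), $N$ is of capacity zero and is therefore not hit by $\mathbf{M}$ for quasi every starting point (\cite[Chap.~4]{FOT94}). Since $\mathbf{M}$ has continuous sample paths, for q.e.\ $x\in A_i$ the whole path $t\mapsto\mathbf{X}_t$ stays in $\tilde E$ $\mathbf{P}^{\varrho,n,\beta}_x$-a.s.; being continuous, connected and started in $A_i$, and since the $A_j$ are disjoint open sets, it remains in $A_i$. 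Thus $p_t(\mathbbm{1}_{E\setminus A_i})(x)=0$ for q.e.\ $x\in A_i$, which is exactly the $T_t$-invariance of $A_i$.

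For part (iii), openness of $A_i^B=A_i\cap\overline{E_+(B)}$ in $\overline{E_+(B)}$ is immediate from openness of $A_i$ in $E$. The decisive point is that convexity of $A_i$ (Condition \ref{condcomp}) and of $\overline{E_+(B)}$ makes $A_i^B$ convex, hence connected; the $A_i^B$ are then disjoint, relatively open and cover $\overline{E_+(B)}\cap\tilde E$, so they are precisely its connected components. Invariance follows exactly as in (ii), now applied to the strongly local, recurrent, regular Dirichlet form $(\mathcal{E}_B,D(\mathcal{E}_B))$ and its associated diffusion on $\overline{E_+(B)}$: by part (i) the set $N\cap\overline{E_+(B)}$ is of $\mathcal{E}_B$-capacity zero, hence not hit, and path continuity confines the process to the single connected component $A_i^B$, giving $T_t^B$-invariance. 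The main obstacle is part (i): the capacity bound genuinely needs the quadratic volume decay, and the crux is the logarithmic profile combined with the dyadic summation of the volume estimate, plus the verification that the distance-based cut-offs actually belong to $D(\mathcal{E})$ and $D(\mathcal{E}_B)$; parts (ii) and (iii) are comparatively routine once the polarity of $N$ is available, their only extra input being the convexity from Condition \ref{condcomp}.
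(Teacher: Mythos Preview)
Your argument is correct, but in all three parts you take a different route from the paper.

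For (i), the paper does not build test functions by hand. Instead it first shows that $(\mathcal{E},D(\mathcal{E}))$ is \emph{strongly regular} by computing the intrinsic metric explicitly (using Proposition~\ref{propenergymeasure} the energy density is $\sum_B|\nabla^B f|^2\mathbbm{1}_{E_+(B)}$, and convexity of $E$ together with the fundamental theorem of calculus forces the intrinsic metric to coincide with the Euclidean one). It then simply invokes Sturm's capacity estimate \cite[Theorem~3]{Stu95}, which converts the quadratic volume bound of Condition~\ref{condcomp} directly into $\operatorname{cap}(\{\varrho=0\})=0$. Your logarithmic cut-off with dyadic shells is precisely the mechanism behind Sturm's theorem, so you are essentially reproving the needed special case; this is more self-contained but longer, and you should be careful with the parenthetical step of showing that the Lipschitz function $F_{\varepsilon,\delta}(d)$ actually lies in $D(\mathcal{E})$ (this follows by mollification and the cut-off sequence of Proposition~\ref{propDirichlet}, not from ``strong regularity'', which you never establish).

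For (ii) and (iii), the paper argues purely on the Dirichlet-form side: by \cite[Corollary~4.6.3]{FOT94} a set is $T_t$-invariant iff it is simultaneously quasi-open and quasi-closed, and since $A_i$ is open it suffices to see that $E\setminus A_i=\bigcup_{j\neq i}A_j\cup\{\varrho=0\}$ is quasi-open, which is immediate from part (i). Your probabilistic argument (polarity of $\{\varrho=0\}$ plus path continuity confining the diffusion to a component) is equally valid and perhaps more intuitive. Note, incidentally, that you do not actually need convexity for (iii): since the $A_j^B$ are disjoint and open in $\overline{E_+(B)}\setminus\{\varrho=0\}$, each $A_i^B$ is clopen there, so the path cannot escape regardless of whether $A_i^B$ is connected; convexity is only really used later, in the irreducibility step of Theorem~\ref{spendtime}.
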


\begin{proof} (i) We only show the first statement for the Dirichlet form $(\mathcal{E},D(\mathcal{E}))$. The second statement follows for the same reasons. By \cite[Theorem 3]{Stu95} and Condition \ref{condcomp} it is enough to show strong regularity of $(\mathcal{E},D(\mathcal{E}))$. Let $f \in D(\mathcal{E}) \cap C^0_c(E)$. Then the energy measure of $f$ has the form 
\[ \frac{1}{2} \nu_{\scriptscriptstyle{\langle f\rangle}} = \sum_{\varnothing \neq B \subset I} | \nabla^B f |^2 \mu_B^{\varrho,n,\replaced{\beta}{s}} = \Big( \sum_{\varnothing \neq B \subset I} | \nabla^B f |^2 \mathbbm{1}_{E_{\scriptscriptstyle{+}}(B)} \Big)\ \mu_{\varrho,n,\replaced{\beta}{s}} \]
by Proposition \ref{propenergymeasure}.
Thus, 
\begin{align*} d(x,y)&= \sup \{ f(x)-f(y) | \ f \in D(\mathcal{E}) \cap C_c(E) \text{ with } \sum_{\varnothing \neq B \subset I} | \nabla^B f |^2 \mathbbm{1}_{E_{\scriptscriptstyle{+}}(B)} \leq 1 \text{ a.e. on } E \} \\
&= \sup \{ f(x)-f(y) | \ f \in D(\mathcal{E}) \cap C_c(E) \text{ with } | \nabla^B f |^2 \leq 1 \text{ a.e. on } E_{\scriptscriptstyle{+}}(B), \ \varnothing \neq B \subset I \}
\end{align*}
for $x,y \in E$. Since $E$ is convex, we have by the fundamental theorem of calculus
\[ \added{\vert x-y\vert} = d(x,y).\]
This proves the assertion.\\
(ii) Clearly, each $A_i$ is open in $E$, since $\varrho$ is continuous. In order to show $T_t$-invariance, it is sufficient to prove that $A_i$ is quasi open and quasi closed simultaneously by \cite[Corollary 4.6.3]{FOT94}.
Since each open set is quasi open, it is left to show that $A_i$ is quasi closed or equivalently that $E \setminus A_i$ is quasi open. Thus, let $\varepsilon >0$.  Since $\{ \varrho =0 \}$ is of \deleted{$\mathcal{E}$-}capacity zero by (i), there exists an open set $B$ containing $\{ \varrho =0 \}$ with $\text{cap}(B) < \varepsilon$. The set $G:= \cup_{j \neq i} A_j \cup B$ is open, contains $E \setminus A_i$ and it holds
\[ \text{cap}(G \setminus (E \setminus A_i)) \leq \text{cap}(B) < \varepsilon. \]
Hence $E\setminus A_i$ is quasi closed. Thus $A_i$ is $T_t$-invariant.\\
(iii) follows by the same arguments.
\end{proof}

\begin{remark} \label{reminv}
\begin{enumerate}
\item[(i)] Due to \cite[Lemma 4.6.3]{FOT94}, $T_t$-invariance of $A_i$ implies that there exists a properly exceptional set $N_i$ such that $A_i \setminus N_i$ is $\mathbf{M}$-invariant in the sense that 
\[ \mathbf{P}^{\varrho,n,\replaced{\beta}{s}}_x(\mathbf{X}_t \in (A_i \setminus N_i) \text{ for all } t \geq 0)=1 \text{ for all } x \in A_i \setminus N_i. \]
\item[(ii)] Fix some set $A_{i}^B$ and define $G_k :=\{ x \in A_{i}^B | \ d(x, \{ \varrho =0\} \cap \overline{E_{\scriptscriptstyle{+}}(B)}) > \frac{1}{k} \} \cap [0,k)^n$. This yields a sequence of bounded open subsets of $A_{i}^B$ increasing to $A_{i}^B$. Since $\varrho \in C^1(E)$ by Condition \ref{condweakdiff}, it follows that $\gamma_k := \text{ess inf}_{x \in G_k} \ \varrho >0$, $k=1,2,\dots,$ (with respect to the measure $\lambda^{(n)}_B$).
\item[(iii)] By a similar argument, $L^p$-norms on $K$ with respect to the measures $\mu_B^{\varrho,n,\replaced{\beta}{s}}$ and $\lambda_B^{(n)}$ respectively are equivalent for some compact set $K$ \deleted{properly} contained in some $A_i^B$.
\item[(iv)] In the case that $\varrho(x) >0$ for all $x \in E$, $\tilde{E}=E$ is already connected. Moreover, in Condition \ref{condcomp} instead of assuming convexity it suffices to require that $\mathcal{I}$ is finite and the intersection of some $A_i$ with $E_{\scriptscriptstyle{+}}(B)$ is either empty or connected.
\end{enumerate}
\end{remark}

\begin{theorem}\label{spendtime}
Let $i \in \mathcal{I}$. For all $f \in L^1(A_i;\mu_{\varrho,n,\replaced{\beta}{s}})$ holds
\begin{align}
\lim_{t \rightarrow \infty} \frac{1}{t} \int_0^t f(\mathbf{X}_{\replaced{s}{\tau}}) d\replaced{s}{\tau}= \frac{\int_{A_i} f d\mu_{\varrho,n,\replaced{\beta}{s}}}{\mu_{\varrho,n,\replaced{\beta}{s}}(A_i)}
\end{align}
$\mathbf{P}_x^{\varrho,n,\replaced{\beta}{s}}$-a.s. for q.e. $x \in A_i$.
\end{theorem}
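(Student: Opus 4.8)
The plan is to reduce the statement to the ratio ergodic theorem for irreducible recurrent Dirichlet forms, \cite[Theorem 4.7.3]{FOT94}, applied to the process restricted to the invariant component $A_i$.

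First I would set up the restriction to $A_i$. By Lemma \ref{lemcap}(ii) the set $A_i$ is open and $T_t$-invariant, and by Remark \ref{reminv}(i) there is a properly exceptional set $N_i$ such that $A_i \setminus N_i$ is $\mathbf{M}$-invariant; hence for q.e.\ starting point $x \in A_i$ the paths of $\mathbf{M}$ remain in $A_i$ for all times, so the process started in $A_i$ is governed by the part of the form on $A_i$, which by $T_t$-invariance is simply the restriction of $(\mathcal{E}, D(\mathcal{E}))$ to $L^2(A_i; \mu_{\varrho,n,\beta})$. This restricted form is recurrent: since $\{\varrho = 0\}$ has capacity zero by Lemma \ref{lemcap}(i), we have $\mathbbm{1}_{A_i} \in D(\mathcal{E})$ with $\mathcal{E}(\mathbbm{1}_{A_i}, \mathbbm{1}_{A_i}) = 0$ by Proposition \ref{propenergymeasure}, which yields recurrence (hence conservativeness) of the restricted form via \cite[Theorem 1.6.3]{FOT94}. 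Moreover $\mu_{\varrho,n,\beta}(A_i) < \infty$ by Condition \ref{conddensity}, and $\mu_{\varrho,n,\beta}(A_i) > 0$ because $A_i$ is a nonempty open set on which $\varrho > 0$ while $m_{n,\beta}$ has full support.

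The decisive step is to prove that the restricted form on $A_i$ is irreducible, i.e., that it admits no $T_t$-invariant set $A \subset A_i$ with $0 < \mu_{\varrho,n,\beta}(A) < \mu_{\varrho,n,\beta}(A_i)$. Here I would invoke strong locality together with the strong regularity established in the proof of Lemma \ref{lemcap}(i), where the intrinsic metric was shown to coincide with the Euclidean metric, and the fact that $A_i$ is connected. An invariant set $A$ gives $\mathbbm{1}_A \in D(\mathcal{E})$ with $\mathcal{E}(\mathbbm{1}_A, \mathbbm{1}_A) = 0$, so by Proposition \ref{propenergymeasure} one has $\nabla^B \mathbbm{1}_A = 0$ on each stratum $E_+(B) \cap A_i$; the point is then that a suitably normalized such function is Lipschitz with respect to the intrinsic metric, so that connectedness of $A_i$ in the intrinsic $(=$ Euclidean$)$ metric forces $\mathbbm{1}_A$ to be constant $\mu_{\varrho,n,\beta}$-a.e.\ on $A_i$ (cf.\ \cite{Stu94, Stu95}). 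Thus $A$ is trivial and irreducibility follows. This is the main obstacle, because the stratified geometry of $E$ means one must rule out invariant sets respecting the decomposition into the faces $E_+(B)$; it is precisely the strong-regularity/intrinsic-metric argument, resting on the connectedness of $A_i$, that genuinely connects the strata.

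Finally I would conclude by the ergodic theorem. Having an irreducible, recurrent, symmetric Dirichlet form on $L^2(A_i; \mu_{\varrho,n,\beta})$ with finite invariant measure, \cite[Theorem 4.7.3]{FOT94} yields, for $f \in L^1(A_i; \mu_{\varrho,n,\beta})$ and the choice $g = \mathbbm{1}_{A_i}$,
\[
\lim_{t\to\infty} \frac{\int_0^t f(\mathbf{X}_s)\,ds}{\int_0^t \mathbbm{1}_{A_i}(\mathbf{X}_s)\,ds} = \frac{\int_{A_i} f\,d\mu_{\varrho,n,\beta}}{\mu_{\varrho,n,\beta}(A_i)}
\]
$\mathbf{P}_x^{\varrho,n,\beta}$-a.s.\ for q.e.\ $x \in A_i$. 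Since the paths remain in $A_i$ and $\mathbf{M}$ is conservative, we have $\int_0^t \mathbbm{1}_{A_i}(\mathbf{X}_s)\,ds = t$ for all $t \geq 0$, and the asserted limit follows.
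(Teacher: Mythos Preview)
Your overall architecture coincides with the paper's: reduce to the part of the form on $A_i$, show it is irreducible recurrent, and invoke \cite[Theorem~4.7.3]{FOT94}. The recurrence step is essentially right, though the paper obtains $\mathbbm{1}_{A_i}\in D(\mathcal{E})$ and $\mathcal{E}(\mathbbm{1}_{A_i},\mathbbm{1}_{A_i})=0$ from $T_t$-invariance via \cite[Theorem~1.6.1]{FOT94} rather than from Proposition~\ref{propenergymeasure}; that proposition is only stated for $g\in D(\mathcal{E})\cap C^0_c(E)$ and does not by itself place $\mathbbm{1}_{A_i}$ in $D(\mathcal{E})$.

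The genuine divergence, and the gap, is in the irreducibility argument. You want to argue that if $\mathcal{E}(\mathbbm{1}_A,\mathbbm{1}_A)=0$ then $\mathbbm{1}_A$ is Lipschitz in the intrinsic metric and hence constant on the connected set $A_i$. But the Sturm-type Lipschitz estimate applies to functions in $D(\mathcal{E})_{\mathrm{loc}}\cap C^0(E)$, and the indicator of an invariant set is only quasi-continuous, not continuous; nor does Proposition~\ref{propenergymeasure} give you $\nabla^B\mathbbm{1}_A$ on the strata, since that result again assumes continuity. The stratified structure is precisely the obstacle: a priori nothing prevents a function $f$ with $\mathcal{E}(f,f)=0$ from taking different constant values on different faces $E_+(B)\cap A_i$, because the form on each face only controls derivatives in the tangential directions $B$. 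The paper handles this concretely. It first passes to the characterization from \cite[Theorem~2.1.11]{ChFu11} (for a recurrent form, irreducibility is equivalent to every $f$ with $\mathcal{E}(f,f)=0$ being constant). Then it shows, via irreducibility of reflecting Brownian motion on each convex piece $\overline{A_i^B}$ (this is where the convexity in Condition~\ref{condcomp} enters), that such an $f$ equals some constant $c^i_B$ on each stratum. Finally it proves that the constants $c^i_B$ agree across adjacent strata by a trace-type inequality: for $l\in B$ with $A_i^{B\setminus\{l\}}\neq\varnothing$, a cutoff $\eta$ localized near a point of $A_i^{B\setminus\{l\}}$ and the fundamental theorem of calculus in the $e_l$-direction give
\[
\int_{A_i^{B\setminus\{l\}}}|\eta f_k|\,d\lambda^{(n)}_{B\setminus\{l\}}\le \int_{A_i^B}|\partial_l(\eta f_k)|\,d\lambda^{(n)}_B
\]
for an approximating sequence $(f_k)$, forcing $c^i_B=c^i_{B\setminus\{l\}}$. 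Your intrinsic-metric idea may be salvageable, but as written it neither bridges the strata nor supplies the continuity hypothesis you implicitly rely on.
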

\begin{proof}
Fix $i \in \mathcal{I}$. Due to \cite[Theorem 4.7.3(iii)]{FOT94}, the definition of $\mathbf{M}^{A_i}$ and Remark \ref{reminv} (i) it is sufficient to show that $(\mathcal{E}^{A_i},D(\mathcal{E}^{A_i}))$ is irreducible recurrent. \deleted{In order to deduce recurrence of $(\mathcal{E},D(\mathcal{E}))$, it is enough to show that there exists a sequence $(f_k)_{k \in \mathbb{N}} \subset D(\mathcal{E})$ such that $\lim_{k \rightarrow \infty} f_k =1$ $\mu_{\varrho,n,\replaced{\beta}{s}}$-a.e. and $\lim_{k \rightarrow \infty} \mathcal{E}(f_k,f_k)=0$ by} \cite[Theorem 1.6.3]{FOT94}. \added{Recurrence}\deleted{This} has already been \replaced{shown}{done} in \deleted{the proof of} Proposition \ref{propDirichlet}. In particular, \added{we have}\deleted{it follows} that $\mathbbm{1}_E \in D(\mathcal{E})$ and $\mathcal{E}(\mathbbm{1}_E,\mathbbm{1_E})=0$. Since $A_i$ is $T_t$-invariant, we have $\mathbbm{1}_{A_i}=\mathbbm{1}_{A_i} \mathbbm{1}_E \in D(\mathcal{E})$ and 
\[ 0=\mathcal{E}(\mathbbm{1}_E,\mathbbm{1}_E)= \mathcal{E}(\mathbbm{1}_{A_i},\mathbbm{1}_{A_i}) + \mathcal{E}(\mathbbm{1}_{E \setminus A_i},\mathbbm{1}_{E \setminus A_i}) \]
by \cite[Theorem 1.6.1]{FOT94}. Hence, $\mathbbm{1}_{A_i} \in D(\mathcal{E}^{A_i})$ and $\mathcal{E}^{A_i}(\mathbbm{1}_{A_i},\mathbbm{1}_{A_i})=0$ which implies recurrence of $(\mathcal{E}^{A_i},D(\mathcal{E}^{A_i}))$.
Taking into account that the considered form is recurrent, irreducibility is equivalent to the condition that every $f \in D(\mathcal{E}^{A_i})$ with $\mathcal{E}^{A_i}(f,f)=0$ is $\mu_{\varrho,n,\replaced{\beta}{s}}$-a.e. constant (on $A_i$) by \cite[Theorem 2.1.11]{ChFu11}.  Denote by $(\mathcal{E}^{i}_{\scriptscriptstyle{B}},D(\mathcal{E}^{i}_{\scriptscriptstyle{B}}))$ the part of the form $(\mathcal{E}_{\scriptscriptstyle{B}},D(\mathcal{E}_{\scriptscriptstyle{B}}))$ on $A_{i}^{\scriptscriptstyle{B}}$. This is the closure of $(\mathcal{E}_B,C_c^2(A_{i}^{\scriptscriptstyle{B}}))$ by \cite[Theorem 4.4.3]{FOT94} and thus, it is irreducible. Indeed, the closure of the pre-Dirichlet form
\[  \sum_{i \in B} \int_{A_{i}^B} \partial_i f \partial_i g d\lambda_{\scriptscriptstyle{B}}^{(n)}, \ \ f,g \in C_c^2(\overline{A_{i}^{\scriptscriptstyle{B}}}), \]
on $L^2(\overline{A_{i}^{\scriptscriptstyle{B}}};\lambda_{\scriptscriptstyle{B}}^{(n)})$ yields reflecting Brownian motion on $\overline{A_i^{\scriptscriptstyle{B}}}$ which is irreducible (see \cite[p.128]{ChFu11}) and hence the closure of the form defined for functions in $C_c^2(A_i^{\scriptscriptstyle{B}})$ on $L^2(A_i^{\scriptscriptstyle{B}};\lambda_{\scriptscriptstyle{B}}^{(n)})$ is also irreducible in view of \cite[Theorem 2.1.11]{ChFu11}. Hence, it follows by \cite[Corollary 4.6.4]{FOT94} and Remark \ref{reminv} (ii) that $(\mathcal{E}^{i}_{\scriptscriptstyle{B}},D(\mathcal{E}^{i}_{\scriptscriptstyle{B}}))$ is irreducible.\\
Let $f \in D(\mathcal{E}^{A_i})$ and choose a seqeunce $(f_k)_{k \in \mathbb{N}}$ in $C_c^2(A_i)$ such that $f_k \rightarrow f$ with respect to $\sqrt{\mathcal{E}^{A_i}_1}$. Then the restriction to $\overline{E_{\scriptscriptstyle{+}}(B)}$ is by definition $\mathcal{E}_{\scriptscriptstyle{B}}$-Cauchy and converges to the restriction of $f$ in $L^2(\overline{E_{\scriptscriptstyle{+}}(B)} ;\mu_{\scriptscriptstyle{B}}^{\varrho,n,\replaced{\beta}{s}})$. Therefore, the convergence holds also in $D(\mathcal{E}_{\scriptscriptstyle{B}})$ and
\[ \mathcal{E}^{A_i}(f,f)=\mathcal{E}(f,f)= \lim_{k \rightarrow \infty} \mathcal{E}(f_k,f_k)=\lim_{k \rightarrow \infty} \sum_{\substack{\varnothing \neq B \subset I \\ A_i^B \neq \varnothing}} \mathcal{E}_{\scriptscriptstyle{B}}(f_k,f_k) = \sum_{\substack{\varnothing \neq B \subset I \\ A_i^B \neq \varnothing}} \mathcal{E}_{\scriptscriptstyle{B}}(f,f) \]
by definition. By $T_t^{\scriptscriptstyle{B}}$-invariance
\[ \mathcal{E}^{A_i}(f,f) = \sum_{\substack{\varnothing \neq B \subset I \\ A_i^B \neq \varnothing}
} \mathcal{E}^{i}_{\scriptscriptstyle{B}}(\mathbbm{1}_{A_{i}^B} f, \mathbbm{1}_{A_{i}^B} f).\]
Therefore, $\mathcal{E}^{A_i}(f,f)=0$ implies $\mathcal{E}^{i}_{\scriptscriptstyle{B}}(\mathbbm{1}_{A_{i}^B} f, \mathbbm{1}_{A_{i}^B} f)=0$ for each summand and hence, $f=c^{i}_B$ $\mu_B^{\varrho,n,\replaced{\beta}{s}}$-a.e. on $A_{i}^{\scriptscriptstyle{B}}$ for some constant $c^{i}_{\scriptscriptstyle{B}}$ by \cite[Theorem 2.1.11]{ChFu11}. Thus, we can conclude \[ f =\sum_{A_i^{\scriptscriptstyle{B}}\not=\varnothing} c^{i}_{\scriptscriptstyle{B}} \mathbbm{1}_{A_{i}^B}.\]
It \replaced{remains}{rests} to show that there exists a constant $c$ such that $c^{i}_{\scriptscriptstyle{B}}=c$ for all $B$. Let $\varnothing \neq B \subset I$ with $A_i^{\scriptscriptstyle{B}} \neq \varnothing$ be arbitrary and $l \in B$ such that $A_i$ also intersects $E_{\scriptscriptstyle{+}}(B \setminus \{l\})$ 
. We show that $c^i_{\scriptscriptstyle{B}} =c^i_{\scriptscriptstyle{{B \setminus \{ l \}}}}$ by contradiction. Then the assertion follows by applying this result successively. If $c^i_{\scriptscriptstyle{B}} \neq c^i_{\scriptscriptstyle{{B \setminus \{l \}}}}$, we can assume that $c^i_{\scriptscriptstyle{B}}=0$ and $c^i_{\scriptscriptstyle{{B \setminus \{ l \}}}}=1$, since $\frac{c^i_B \mathbbm{1}_{A_i} -f}{c^i_B - c^i_{B \setminus \{l\}}} \in D(\mathcal{E}^{A_i})$. Fix a point $z \in A_i^{\scriptscriptstyle{B \setminus \{l\}}}$. Then, by construction \added{there} exists a (bounded) neighborhood $U$ of $z$ in $E$ such that its closure is contained in $A_i$. Choose a $C^{\infty}$-cutoff function $\eta$ defined on $E$ which is constantly one near $z$  and has support \deleted{properly} contained in $U$. Then it is easy to see that $\eta f \in D(\mathcal{E}^{A_i})$ and $(\eta f_k)_{k \in \mathbb{N}}$ is an approximation for $\eta f$ whenever $(f_k)_{k \in \mathbb{N}}$ is a sequence of $C_c^2(A_i)$-functions which approximates $f$ in $D(\mathcal{E}^{A_i})$. Moreover, $\mathcal{E}_{\scriptscriptstyle{B}}(\eta f_k,\eta f_k) \rightarrow 0$ as $k \rightarrow \infty$. By construction we have
\[ \eta f_k(x) = \eta f_k(x) - \eta f_k(x + C e_l) = - \int_0^C \partial_l  (\eta f_k)(x+t e_l) dt, \]
where $x \in U \cap \overline{E_{\scriptscriptstyle{+}}(B \setminus \{l \})}$ and $C >0$ is chosen such that $x+ C e_l \in U \setminus \text{supp}(\eta)$. Hence,
\[ |\eta f_k(x)| \leq \int_0^{\infty} | \partial_l(\eta f_k)(x+ t e_l)| dt. \]
This implies 
\[ \int_{A_i^{B \setminus \{l \})}} |\eta f_k| \ d\lambda_{B \setminus \{l\}}^{(n)} \leq \int_{A_i^B} | \partial_j (\eta f_k)| \ d\lambda_{\scriptscriptstyle{B}}^{\scriptscriptstyle{(n)}}. \]
Since we can restrict our considerations to the closure of $U$ by construction, we have equivalence of norms by Remark \ref{reminv} (iii) and hence, the left hand side converges to a positive constant, whereas the right hand side converges to zero. This is a contradiction and thus $c^i_{\scriptscriptstyle{B}}=c$ for some constant $c$, all $B$ and $i$.
\end{proof}

By the preceding ergodic theorem it follows immediately by choosing $f$ as the indicator function of the boundary that the occupation time of the process $\mathbf{M}$ on the boundary increases asymptotically linear whenever the process starts in a component which possesses a boundary part with $\mu_{\varrho,n,\replaced{\beta}{s}}$ positive measure. In particular, the process spends in this case $\mathbf{P}_x^{\varrho,n,\replaced{\beta}{s}}$-a.s. a positive amount of time at the boundary (with respect to the Lebesgue measure).

\begin{corollary} \label{corospendtime}
For all measurable $\Gamma\subset \partial E = \dot{\bigcup}_{B\subsetneq I}{E}_{\scriptscriptstyle{+}}(B)$ and all $i \in \mathcal{I}$ holds
\begin{align} \label{bdryoccup}
\lim_{t \rightarrow \infty} \frac{1}{t} \int_0^{t}\mathbbm{1}_{\Gamma}\big(\mathbf{X}_{\replaced{s}{\tau}}\big)\,d\replaced{s}{\tau}= \frac{\mu_{\varrho,n,\replaced{\beta}{s}}\big(\Gamma \cap A_i \big)}{\mu_{\varrho,n,\replaced{\beta}{s}}(A_i)}
\end{align}
$\mathbf{P}_x^{\varrho,n,\replaced{\beta}{s}}$-a.s. for q.e. $x \in A_i$. In particular, under the condition that $\mu_{\varrho,n,\replaced{\beta}{s}}(\Gamma \cap A_i) >0$ for q.e.~$x\in A_i$ and $\mathbf{P}_x^{\varrho,n,\replaced{\beta}{s}}$-a.a.~$\omega\in\mathbf{\Omega}$  there exists $T(\omega,x)\in [0,\infty)$ and $c(\omega,x)\in(0,\infty)$ such that
\begin{align}\label{proportional}
\int_0^{t}\mathbbm{1}_{\Gamma}\big(\mathbf{X}_{\replaced{s}{\tau}}(\omega)\big)\,d\replaced{s}{\tau} \ge t\, c(\omega,x) \quad\text{for all}\quad t\ge T(\omega,x).
\end{align}
\end{corollary}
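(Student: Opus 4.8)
The plan is to deduce both assertions directly from the ergodic theorem, Theorem \ref{spendtime}, by specializing the integrand there to the indicator function of the boundary set. First I would check that $\mathbbm{1}_\Gamma$ restricts to an element of $L^1(A_i;\mu_{\varrho,n,\beta})$. Since Condition \ref{condweakdiff}(i) implies Condition \ref{conddensity} by Remark \ref{remcondequi}(iv), the density $\varrho$ is integrable, so the total mass $\mu_{\varrho,n,\beta}(E)$ is finite; in particular $\mu_{\varrho,n,\beta}(A_i)<\infty$, and the bounded measurable function $\mathbbm{1}_\Gamma$ therefore lies in $L^1(A_i;\mu_{\varrho,n,\beta})$ with
\[ \int_{A_i}\mathbbm{1}_\Gamma\,d\mu_{\varrho,n,\beta}=\mu_{\varrho,n,\beta}(\Gamma\cap A_i). \]

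Applying Theorem \ref{spendtime} to $f=\mathbbm{1}_\Gamma|_{A_i}$ then yields immediately
\[ \lim_{t\to\infty}\frac{1}{t}\int_0^t\mathbbm{1}_\Gamma(\mathbf{X}_s)\,ds=\frac{\mu_{\varrho,n,\beta}(\Gamma\cap A_i)}{\mu_{\varrho,n,\beta}(A_i)}\quad\mathbf{P}_x^{\varrho,n,\beta}\text{-a.s. for q.e.\ }x\in A_i, \]
which is exactly (\ref{bdryoccup}). The one point requiring care is that Theorem \ref{spendtime} is phrased for a function defined on $A_i$, whereas $\mathbbm{1}_\Gamma$ is defined on all of $E$. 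Here I would invoke the $\mathbf{M}$-invariance from Remark \ref{reminv}(i): starting from $x\in A_i\setminus N_i$, the process remains in $A_i\setminus N_i$ for all times $\mathbf{P}_x^{\varrho,n,\beta}$-almost surely, so that $\mathbbm{1}_\Gamma(\mathbf{X}_s)=\mathbbm{1}_{\Gamma\cap A_i}(\mathbf{X}_s)$ almost surely and the time average only ever evaluates the restriction of $\mathbbm{1}_\Gamma$ to $A_i$. This is the only substantive (and still routine) step; there is no real obstacle beyond this bookkeeping.

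For the second assertion (\ref{proportional}) I would read off a lower bound from the established limit. Assume $\mu_{\varrho,n,\beta}(\Gamma\cap A_i)>0$; then the right-hand side of (\ref{bdryoccup}) is the strictly positive constant $c_0:=\mu_{\varrho,n,\beta}(\Gamma\cap A_i)/\mu_{\varrho,n,\beta}(A_i)$. By the definition of the limit in (\ref{bdryoccup}), for q.e.\ $x\in A_i$ and for $\mathbf{P}_x^{\varrho,n,\beta}$-almost every $\omega$ there exists $T(\omega,x)\in[0,\infty)$ such that
\[ \frac{1}{t}\int_0^t\mathbbm{1}_\Gamma(\mathbf{X}_s(\omega))\,ds\ge\frac{c_0}{2}\quad\text{for all }t\ge T(\omega,x). \]
Setting $c(\omega,x):=c_0/2\in(0,\infty)$ and multiplying through by $t$ gives precisely (\ref{proportional}), which completes the argument.
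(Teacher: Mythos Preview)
Your proposal is correct and matches the paper's approach exactly: the paper does not give a separate proof of this corollary, but simply remarks before stating it that it follows immediately from Theorem~\ref{spendtime} by choosing $f$ as the indicator function of the boundary set. Your additional bookkeeping (integrability of $\mathbbm{1}_\Gamma$, the invariance from Remark~\ref{reminv}(i), and the $\varepsilon$-argument for~(\ref{proportional})) merely spells out what the paper leaves implicit.
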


\begin{corollary}\label{names} 
Let $\varrho > 0$ \added{pointwisely}, $j\in I$ and $B\not=I$.
 Then
\begin{align} \label{jzero}
\lim_{t \rightarrow \infty} \frac{1}{t} \int_0^t \mathbbm{1}_{\{0\}}(X^j_{\replaced{s}{\tau}}) d\replaced{s}{\tau} = \frac{\mu_{\varrho,n,\replaced{\beta}{s}}(\{x_j=0\})}{\mu_{\varrho,n,\replaced{\beta}{s}}(E)} >0 
\end{align}
and
\begin{align*}
\lim_{t \rightarrow \infty} \frac{1}{t} \int_0^t \mathbbm{1}_{E_+(B)}(X_{\replaced{s}{\tau}}) d\replaced{s}{\tau} = \frac{\mu_{\varrho,n,\replaced{\beta}{s}}(E_+(B))}{\mu_{\varrho,n,\replaced{\beta}{s}}(E)} >0 
\end{align*}
$\mathbf{P}_x^{\varrho,n,\replaced{\beta}{s}}$-a.s. for q.e. $x \in E$ and (\ref{proportional}) holds. Moreover, the right hand side of (\ref{jzero}) is increasing in $\replaced{\beta}{s}$, converges to $1$ as $\replaced{\beta}{s} \rightarrow \infty$ and converges to $0$ as $\replaced{\beta}{s} \rightarrow 0$.

\end{corollary}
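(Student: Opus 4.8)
The plan is to read off the two limit identities from the ergodic results already established, to get the strict positivity and the lower bound (\ref{proportional}) from $\varrho>0$, and finally to make the ratio on the right of (\ref{jzero}) explicit in $\beta$ in order to treat its two limits and its monotonicity.

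First I would note that $\varrho>0$ everywhere means $\tilde E=E\setminus\{\varrho=0\}=E$, which is convex and hence connected; thus $\mathcal I=\{1\}$ with $A_1=E$ (exactly the situation of Remark \ref{reminv}(iv)), and Condition \ref{condcomp} holds trivially. Consequently Theorem \ref{spendtime} and Corollary \ref{corospendtime} apply with $A_i=E$. Since $\{x_j=0\}=\dot{\bigcup}_{B\subset I,\,j\notin B}E_+(B)$ and each such $B$ satisfies $B\subsetneq I$, the set $\{x_j=0\}$ is a measurable subset of $\partial E$, and likewise $E_+(B)\subset\partial E$ for $B\subsetneq I$. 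Applying Corollary \ref{corospendtime} with $\Gamma=\{x_j=0\}$ and using $\mathbbm{1}_{\{0\}}(X^j_s)=\mathbbm{1}_{\{x_j=0\}}(\mathbf X_s)$ gives (\ref{jzero}), and with $\Gamma=E_+(B)$ gives the second limit.

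For the strict inequalities I would use continuity and positivity of $\varrho$: since $\{x_j=0\}\supset E_+(I\setminus\{j\})$ and $\varrho$ is bounded below by a positive constant on some compact subset of the relatively open face $E_+(I\setminus\{j\})$ (of positive $\lambda_{I\setminus\{j\}}^{(n)}$-measure), one gets $\mu_{\varrho,n,\beta}(\{x_j=0\})\ge\mu_{\varrho,n,\beta}(E_+(I\setminus\{j\}))>0$; the same argument yields $\mu_{\varrho,n,\beta}(E_+(B))>0$ for every $B\subsetneq I$ (for $B=\varnothing$ directly $\mu_{\varrho,n,\beta}(\{0\})=\beta^n\varrho(0)>0$). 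As $\mu_{\varrho,n,\beta}(\Gamma\cap A_1)=\mu_{\varrho,n,\beta}(\Gamma)>0$ in both cases, the ``in particular'' part of Corollary \ref{corospendtime} yields the proportional lower bound (\ref{proportional}). For the dependence on $\beta$ I would factor the $j$-th coordinate out of the reference measure, $m_{n,\beta}=(dx^j_++\beta\,\delta^j_0)\otimes m^{(j)}_{n-1,\beta}$, where $m^{(j)}_{n-1,\beta}$ is the analogous measure in the remaining coordinates $\hat x$. This gives $\mu_{\varrho,n,\beta}(\{x_j=0\})=\beta\,h(\beta)$ and $\mu_{\varrho,n,\beta}(\{x_j>0\})=g(\beta)$, where $h(\beta)=\int\varrho(0,\hat x)\,dm^{(j)}_{n-1,\beta}(\hat x)$ and $g(\beta)=\int\!\int_0^\infty\varrho(x_j,\hat x)\,dx_j\,dm^{(j)}_{n-1,\beta}(\hat x)$ are polynomials in $\beta$ of degree $n-1$ with nonnegative coefficients, with $g(0)=\int_{\mathring E}\varrho\,d\lambda^{(n)}_I>0$ and top coefficients equal to $\varrho(0,\ldots,0)>0$. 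Hence the right-hand side of (\ref{jzero}) is $f(\beta)=\beta h(\beta)/(\beta h(\beta)+g(\beta))$; letting $\beta\downarrow0$ gives $f\sim\beta h(0)/g(0)\to0$, and comparing the degrees $n$ and $n-1$ of numerator and denominator as $\beta\uparrow\infty$ gives $f\to1$.

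The main obstacle is monotonicity. It is equivalent to $f'>0$, i.e.\ to $g\,h+\beta(h'g-hg')>0$ on $(0,\infty)$, equivalently $N'D-ND'>0$ for $N=\mu_{\varrho,n,\beta}(\{x_j=0\})$ and $D=\mu_{\varrho,n,\beta}(\{x_j>0\})$. Expanding in the boundary integrals $a_B:=\int_{E_+(B)}\varrho\,d\lambda^{(n)}_B>0$ one finds
\[ N'D-ND'=\sum_{B\not\ni j}\ \sum_{C\ni j}a_Ba_C\,(\#C-\#B)\,\beta^{\,2n-\#B-\#C-1}. \]
I would symmetrize this double sum under the bijection $C\mapsto C\setminus\{j\}$ between the two index families and collect, for each power of $\beta$, a coefficient of the form $a_Ba_{S\cup\{j\}}(\#S+1-\#B)+a_Sa_{B\cup\{j\}}(\#B+1-\#S)$ with $B,S\subset I\setminus\{j\}$. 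For $n\le2$ these collected coefficients are manifestly nonnegative, so positivity is elementary; for general $n$ it rests on a supermodularity-type control of the face integrals $a_B$ under adjoining the index $j$, and establishing this inequality — or isolating the structural property of $\varrho$ that guarantees it — is the crux of the argument.
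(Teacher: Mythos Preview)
Your derivation of the two ergodic limits, of strict positivity, and of (\ref{proportional}) is correct and is exactly what the paper does: it simply invokes (\ref{bdryoccup}) with $A_i=E$. For the limits $\beta\to0$ and $\beta\to\infty$ your factorisation through the $j$-th coordinate is equivalent to the paper's route; the paper passes instead to the complementary ratio $1-F(\beta)=\mu_{\varrho,n,\beta}(\{x_j>0\})/\mu_{\varrho,n,\beta}(E)$ and groups terms by $\#B$, writing it as $\sum_{i=0}^{n}a_i\beta^i\big/\sum_{i=0}^{n}b_i\beta^i$ with $a_i=\sum_{\#B=n-i,\;j\in B}\int_{E_+(B)}\varrho\,d\lambda_B^{(n)}$ and $b_i=\sum_{\#B=n-i}\int_{E_+(B)}\varrho\,d\lambda_B^{(n)}$. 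Since $a_0=b_0>0$, $a_n=0<b_n$, and $0<a_i<b_i$ for $1\le i\le n-1$, the two limits follow immediately, just as in your argument.

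For monotonicity you have put your finger on a real difficulty. The paper, after recording the inequalities above, writes only ``Hence (\ref{decreq}) is decreasing in $\beta$''. But that inference is not valid: differentiating, one finds $(P/Q)'\le0$ for all $\beta>0$ requires essentially that $a_i/b_i$ be non-increasing in $i$ (equivalently $a_ib_k\ge a_kb_i$ for $i<k$), and the bare inequalities $a_0=b_0$, $a_i<b_i$ do not force this. In fact the monotonicity statement appears to be false in the stated generality. For $n=3$ one can build an admissible $\varrho\in C^1(E)$, $\varrho>0$, concentrated near the $x_1$-axis and near the face $\{x_1=0\}$ away from the other axes, so that $w_{\{1\}}$ and $w_{\{2,3\}}$ are large while all other face integrals $w_B$ are small; this gives $a_1/b_1\ll a_2/b_2$, and a direct evaluation of $F(\beta)$ at a few values of $\beta$ shows it is not monotone. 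So the ``supermodularity-type control'' you are looking for cannot be supplied by the hypotheses alone; the paper's one-line argument is a genuine gap, and the monotonicity claim as stated needs an additional structural assumption on $\varrho$ (it is, for example, elementary when $\varrho$ is a product density, and it does hold for $n\le2$ as you observed).
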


\begin{proof} The first statement follows directly from (\ref{bdryoccup}). In order to proof the latter assertion note that 
\[  \frac{\mu_{\varrho,n,\replaced{\beta}{s}}(\{ x_j=0 \})}{\mu_{\varrho,n,\replaced{\beta}{s}}(E)} = 1- \frac{\mu_{\varrho,n,\replaced{\beta}{s}}( \{ x_j >0 \})}{\mu_{\varrho,n,\replaced{\beta}{s}}(E)} \]
and
\begin{align} \label{decreq}
\frac{\mu_{\varrho,n,\replaced{\beta}{s}}( \{ x_j >0 \})}{\mu_{\varrho,n,\replaced{\beta}{s}}(E)}= \frac{\sum_{\substack{B \subset I \\ j \in B}} \int_{E_+(B) } \replaced{\beta}{s}^{n-\#B} \varrho d\lambda_{\scriptscriptstyle{B}}^{\scriptscriptstyle{(n)}}}{\sum_{B \subset I} \int_{E_+(B) } \replaced{\beta}{s}^{n-\#B} \varrho d\lambda_{\scriptscriptstyle{B}}^{\scriptscriptstyle{(n)}}}=\frac{\sum_{i=0}^n \replaced{\beta}{s}^i a_i}{\sum_{i=0}^n \replaced{\beta}{s}^i b_i},
\end{align}
where 
\begin{align*} &a_i:= \sum_{\substack{B \subset I \\ \#B=n-i, \ j \in B}} \int_{E_+(B) } \varrho d\lambda_{\scriptscriptstyle{B}}^{\scriptscriptstyle{(n)}} \ \text{ for } i=0,\dots, n-1, \\ &b_i:= \sum_{\substack{B \subset I \\ \#B=n-i}} \int_{E_+(B) } \varrho d\lambda_{\scriptscriptstyle{B}}^{\scriptscriptstyle{(n)}} \ \text{ for } i=0,\dots, n
\end{align*}
and $a_n:=0$. It holds $0 < a_i < b_i$ for $i=1,\dots,n-1$, $0=a_n<b_n$ and $0 < a_0=b_0$. Hence, (\ref{decreq}) is decreasing in $\replaced{\beta}{s}$, converges to $0$ as $\replaced{\beta}{s} \rightarrow \infty$ and converges to $1$ as $\replaced{\beta}{s} \rightarrow 0$.
\end{proof}

\section{Application to the dynamical wetting model in $(d+1)$-dimension}

Let $d\in\mathbb{N}$ and $D_{\scriptscriptstyle{d}}:=(0,1]^d\subset\mathbb{R}^d$. For $N\in\mathbb{N}$ we define $D_{\scriptscriptstyle{d,N}}:=ND_{\scriptscriptstyle{d}}\cap\mathbb{Z}^d$, where $ND_{\scriptscriptstyle{d}}:=\big\{N\theta\,\big|\,\theta\in D_{\scriptscriptstyle{d}}\big\}$. Here $N$ stands for the scaling parameter. The discretized set $D_{d,\scriptscriptstyle{N}}$ is a \emph{microscopic} correspondence to the \emph{macroscopic} domain $D_{\scriptscriptstyle{d}}$ and given by $D_{\scriptscriptstyle{d,N}}=\big\{1,2,\ldots,N\big\}^d$. The boundary $\partial D_{\scriptscriptstyle{d,N}}$ of $D_{\scriptscriptstyle{d,N}}$ is defined by $\partial D_{\scriptscriptstyle{d,N}}:=\big\{\added{x\in\mathbb{Z}^d\setminus D_{\scriptscriptstyle{d,N}}}\,\big|\,|x-y|=1\mbox{ for some }y\in D_{\scriptscriptstyle{d,N}}\big\}$ and the closure $\overline{D_{d,\scriptscriptstyle{N}}}$ of $D_{\scriptscriptstyle{d,N}}$ is \added{defined} by $\overline{D_{d,\scriptscriptstyle{N}}}:=D_{\scriptscriptstyle{d,N}}\cup\partial D_{d,\scriptscriptstyle{N}}$. Hence $\overline{D_{\scriptscriptstyle{d,N}}}=\big\{0,1,2,\ldots,N+1\big\}^d$. For fixed $N\in\mathbb{N}$ we consider the \emph{space of interfaces}
\begin{align*}
\overline{\Omega^+_{\scriptscriptstyle{d,N}}}:=[0,\infty)^{{D_{\scriptscriptstyle{d,N}}}}:=\Big\{\phi:{D_{\scriptscriptstyle{d,N}}}\to[0,\infty)\Big\}
\end{align*}
on ${D_{\scriptscriptstyle{d,N}}}$. Note that $\deleted{\phi_x:=}\phi(x)$ describes the height of an interface $\phi\in\overline{\Omega^+_{\scriptscriptstyle{d,N}}}$ at position $x\in {D_{\scriptscriptstyle{d,N}}}$ with respect to the reference hyperplane ${D_{\scriptscriptstyle{d,N}}}$. Therefore, $\replaced{\phi(x)}{\phi_x}$, $x\in {D_{\scriptscriptstyle{d,N}}}$, is also called \emph{height variable}. We extend $\phi\in\overline{\Omega_{\scriptscriptstyle{d,N}}^+}$ to the boundary $\partial D_{\scriptscriptstyle{d,N}}$ by setting $\replaced{\phi(x)}{\phi_x}=0$ for all $x\in\partial D_{\scriptscriptstyle{d,N}}$. The restriction for the functions $\phi$ to take values in $[0,\infty)\subset\mathbb{R}$ reflects the fact that a hard wall is settled at height level $0$ of the interface. 

The potential energy of an interface $\phi\in\overline{\Omega_{\scriptscriptstyle{d,N}}^+}$ is given by a \emph{Hamiltonian} with \emph{zero boundary condition}, i.e.,
\begin{align}\label{equHam}
\overline{\Omega_{\scriptscriptstyle{d,N}}^+}\ni\phi\mapsto H^{\scriptscriptstyle{V}}_{\scriptscriptstyle{d,N}}(\phi):=\frac{1}{2}\sum_{\stackunder{|x-y|=1}{x,y\in{{\overline{D_{\scriptscriptstyle{d,N}}}}}}}V\big(\replaced{\phi(x)-\phi(y)}{\phi_x-\phi_y}\big)\in\mathbb{R},
\end{align}
where the pair interaction potential $V$ fulfills Condition \ref{condpotential} below.
\begin{condition}\label{condpotential}
The potential $V:\mathbb{R}\to[-b,\infty)$, $b\in[0,\infty)$, is continuously differentiable and symmetric, i.e., $V(-r)=V(r)$ for all $r\in\mathbb{R}$ and moreover, $\kappa:=\int_{\mathbb{R}}\exp\big(-V(r)\big)\,dr<\infty$.
\end{condition}

A natural distribution on the space of interfaces $\Big(\overline{\Omega_{\scriptscriptstyle{d,N}}^+},\mathcal{B}\big(\overline{\Omega_{\scriptscriptstyle{d,N}}^+}\big)\Big)$ is given by the probability measure $\mu_{\scriptscriptstyle{d,N}}^{\scriptscriptstyle{V,\replaced{\beta}{s}}}$ defined by
\begin{align}\label{repmeasure}
d\mu_{\scriptscriptstyle{d,N}}^{\scriptscriptstyle{V,\replaced{\beta}{s}}}(\phi)=\frac{1}{Z_{\scriptscriptstyle{d,N}}^{\scriptscriptstyle{V,\replaced{\beta}{s}}}}\exp\Big(-H^{\scriptscriptstyle{V}}_{\scriptscriptstyle{d,N}}(\phi)\Big)\,\prod_{x\in {D_{\scriptscriptstyle{d,N}}}}\Big(\replaced{\beta}{s}\,d\replaced{\delta_0(x)}{\delta_0^x}+d\replaced{\phi_+(x)}{\phi_+^x}\Big),\quad\phi\in\overline{\Omega_{\scriptscriptstyle{d,N}}^+},
\end{align}
with pair interaction potential $V$ under Condition \ref{condpotential} and normalizing constant $Z_{\scriptscriptstyle{d,N}}^{\scriptscriptstyle{V,\replaced{\beta}{s}}}$. \added{Here $\prod_{x\in {D_{\scriptscriptstyle{d,N}}}}\Big(\,d\delta_0(x)+d\phi_+(x)\Big)$ denotes the product measure on $[0,\infty)^{N^d}$, where $d\phi_+(x)$ is the Lebesgue measure on $\big([0,\infty),\mathcal{B}\big([0,\infty)\big)\big)$ and $\delta_{\scriptscriptstyle{0}}(x)$ denotes the Dirac measure on $\big([0,\infty),\mathcal{B}\big([0,\infty)\big)\big)$ at $0$ for $x\in D_{\scriptscriptstyle{d,N}}$.} $\mu_{\scriptscriptstyle{d,N}}^{\scriptscriptstyle{V,\replaced{\beta}{s}}}$ is a \emph{finite volume Gibbs measure} conditioned on $[0,\infty)^{{D_{\scriptscriptstyle{d,N}}}}$.
The corresponding space of square integrable functions we denote by $L^2\Big(\overline{\Omega_{\scriptscriptstyle{d,N}}^+};\mu_{\scriptscriptstyle{d,N}}^{\scriptscriptstyle{V,\replaced{\beta}{s}}}\Big)$. Next we define the probability density
\begin{align*}
\varrho(\phi):=\varrho^{\scriptscriptstyle{V,\replaced{\beta}{s}}}_{\scriptscriptstyle{d,N}}(\phi):=\frac{1}{Z_{\scriptscriptstyle{d,N}}^{\scriptscriptstyle{V,\replaced{\beta}{s}}}}\exp\Big(-H^{\scriptscriptstyle{V}}_{\scriptscriptstyle{d,N}}(\phi)\Big),\quad\phi\in\overline{\Omega_{\scriptscriptstyle{d,N}}^+}.
\end{align*}
Hence we can rewrite (\ref{repmeasure}) as
\begin{multline*}
d\mu_{\scriptscriptstyle{N^d,\replaced{\beta}{s},\varrho}}:=d\mu_{\scriptscriptstyle{d,N}}^{\scriptscriptstyle{V,\replaced{\beta}{s}}}=\varrho\,\prod_{x\in {D_{\scriptscriptstyle{d,N}}}}\Big(\replaced{\beta}{s}\,d\replaced{\delta_0(x)}{\delta_0^x}+d\replaced{\phi_+(x)}{\phi_+^x}\Big)\\
=\varrho\,\sum_{B\subset D_{\scriptscriptstyle{d,N}}}\replaced{\beta}{s}^{N^d-\#B}\left(\prod_{x\in B}d\replaced{\phi_+(x)}{\phi^x_+}\prod_{y\in D_{\scriptscriptstyle{d,N}}\setminus B}d\replaced{\delta_0(y)}{\delta^y_{0}}\right)
=\varrho\,\sum_{B\subset D_{\scriptscriptstyle{d,N}}}d\lambda^{N^d,\replaced{\beta}{s}}_{\scriptscriptstyle{B}}
=\varrho\,dm_{N^d,\replaced{\beta}{s}},\quad\phi\in\overline{\Omega_{\scriptscriptstyle{d,N}}^+}.
\end{multline*}

\begin{condition}\label{cond2}
$\mathbb{V}'(x,\cdot)\in L^2\big(\overline{\Omega_{\scriptscriptstyle{d,N}}^+};\mu_{\scriptscriptstyle{N^d,\replaced{\beta}{s},\varrho}}\big)$ for all $x\in {D_{\scriptscriptstyle{d,N}}}$, where
\begin{align*}
\overline{\Omega_{\scriptscriptstyle{d,N}}^+}\ni\phi\mapsto \mathbb{V}'(x,\phi):=\sum_{\stackunder{|x-y|=1}{y\in{{\overline{D_{\scriptscriptstyle{d,N}}}}}}}V'\big(\replaced{\phi(x)-\phi(y)}{\phi_x-\phi_y}\big)\in\mathbb{R}.
\end{align*}
\end{condition}

\begin{remark}\label{remcondapp}
Condition \ref{condpotential} guarantees that $V(0)\in[-b,\infty)$, hence flat interfaces are natural elements in the space of interfaces $\overline{\Omega_{\scriptscriptstyle{d,N}}^+}$, i.e., occur with positive probability, see (\ref{repmeasure}). Furthermore, Conditions \ref{condpotential} and \ref{cond2} imply Conditions \deleted{\ref{condHamza},} \ref{condweakdiff} and \ref{condcomp} (see also Remark \ref{remcondequi}).
\end{remark}

\begin{remark}
\added{
In \cite{Fu05} the authors assume that the potential $V$ is twice
continuously differentiable, symmetric and strictly convex, i.e., it exist
some constants $c_{-},c_{+} >0$ such that $c_{-} \leq V^{\prime \prime}(r)
\leq c_{+}$ for all $r \in \mathbb{R}$. This implies that $\kappa:=
\int_{\mathbb{R}} \exp(-V(r)) dr < \infty$. In particular, the potentials
investigated in \cite{Fu05} obviously fulfill Condition \ref{condpotential}.
In addition, Condition \ref{cond2} is also satisfied. Indeed, in the case
$d=N=1$ with $\phi:=\phi(1)$ it holds by integration by parts}
\begin{align*}
0 &\leq \int_{[0,\infty)} \mathbb{V}^{\prime}(1,\phi)^2~ \exp(-2 V(\phi))
\big(\beta d\delta_0 + d\phi_{+}\big)\\
&= \lim_{b \rightarrow \infty} \int_0^b
\big(-2V^{\prime}(\phi)\big)\big(-2V^{\prime}(\phi)\big)~ \exp(-2 V(\phi))~
d\phi_{+} \\
&= \lim_{b \rightarrow \infty} -2 V^{\prime} (b) \exp(-2V(b)) +
\int_{[0,\infty)} 2 V^{\prime \prime}(\phi) \exp(-2 V(\phi)) ~ d\phi_{+} \\
&\leq 2 c_{+} \int_{[0,\infty)} \exp(-2 V(\phi)) ~ d\phi_{+} < \infty,
\end{align*}
\added{since $V^{\prime}$ is non-decreasing and $V^{\prime}(0)=0$.
Similar, but more lengthy, calculations show that this result is valid for higher dimensions and larger numbers of height variables. Therefore, the class of admissible potentials in our construction includes the one considered in \cite{Fu05} for the dynamical wetting model.}
\end{remark}

For each $\phi\in\overline{\Omega_{\scriptscriptstyle{d,N}}^+}$ we denote by
\begin{align*}
D^{\scriptscriptstyle{\text{dry}}}_{\scriptscriptstyle{d,N}}(\phi):=\big\{x\in D_{\scriptscriptstyle{d,N}}\,\big|\replaced{\phi(x)}{\phi_x}=0\big\}\quad\mbox{and}\quad D^{\scriptscriptstyle{\text{wet}}}_{\scriptscriptstyle{d,N}}(\phi):=\big\{x\in D_{\scriptscriptstyle{d,N}}\,\big|\replaced{\phi(x)}{\phi_x}>0\big\},
\end{align*}
\emph{dry regions} and \emph{wet regions} associated with the interface $\phi$, respectively, and define for $A,B\subset D_{\scriptscriptstyle{d,N}}$,
\begin{align*}
\Omega_{\scriptscriptstyle{d,N,A}}^{+\scriptscriptstyle{\text{,dry}}}:=\Big\{\phi\in\overline{\Omega_{\scriptscriptstyle{d,N}}^+}\,\Big|\,D^{\scriptscriptstyle{\text{dry}}}_{\scriptscriptstyle{d,N}}(\phi)=A\Big\}\quad\mbox{and}\quad\Omega_{\scriptscriptstyle{d,N,B}}^{+\scriptscriptstyle{\text{,wet}}}:=\Big\{\phi\in\overline{\Omega_{\scriptscriptstyle{d,N}}^+}\,\Big|\,D^{\scriptscriptstyle{\text{wet}}}_{\scriptscriptstyle{d,N}}(\phi)=B\Big\},
\end{align*}
respectively. 
\begin{remark}
The following decomposition of the state space is valid:
\begin{align*}
\overline{\Omega_{\scriptscriptstyle{d,N}}^+}=\dot{\bigcup}_{A\subset D_{\scriptscriptstyle{d,N}}}\Omega_{\scriptscriptstyle{d,N,A}}^{+\scriptscriptstyle{\text{,dry}}}=\dot{\bigcup}_{B\subset D_{\scriptscriptstyle{d,N}}}\Omega_{\scriptscriptstyle{d,N,B}}^{+\scriptscriptstyle{\text{,wet}}}.
\end{align*}
\end{remark}
Therefore, $\mu_{\scriptscriptstyle{N^d,\replaced{\beta}{s},\varrho}}=\sum_{B\subset D_{\scriptscriptstyle{d,N}}}\mu^{\scriptscriptstyle{N^d,\replaced{\beta}{s},\varrho}}_{\scriptscriptstyle{B}}$ with $\mu^{\scriptscriptstyle{N^d,\replaced{\beta}{s},\varrho}}_{\scriptscriptstyle{B}}:=\mu_{\scriptscriptstyle{N^d,\replaced{\beta}{s},\varrho}}\Big|_{\mathcal{B}_{\Omega_{\scriptscriptstyle{d,N,B}}^{+\scriptscriptstyle{\text{,wet}}}}}$.

\begin{theorem}\label{theosumdiriapp}
Let $d,N\in\mathbb{N}$. For $\replaced{\beta}{s}\in(0,\infty)$ we have that under Conditions \ref{condpotential} and \ref{cond2} 
\begin{align}\label{formapp}
\mathcal{E}^{N^d,\replaced{\beta}{s},\varrho}\big(F,G\big):=\sum_{\varnothing\not=B\subset D_{\scriptscriptstyle{d,N}}}\mathcal{E}^{N^d,\replaced{\beta}{s},\varrho}_{\scriptscriptstyle{B}}\big(F,G\big),\quad F,G\in \mathcal{D}=C_c^2\big(\overline{\Omega_{\scriptscriptstyle{d,N}}^+}\big)
\end{align}
with 
\begin{align*}
\mathcal{E}^{N^d,\replaced{\beta}{s},\varrho}_{\scriptscriptstyle{B}}\big(F,G\big):=\sum_{x\in B}\int_{\Omega_{\scriptscriptstyle{d,N,B}}^{+\scriptscriptstyle{\text{,wet}}}}\partial_xF\,\partial_x G\,d\mu^{\scriptscriptstyle{N^d,\replaced{\beta}{s},\varrho}}_{\scriptscriptstyle{B}},\quad\varnothing\not=B\subset D_{\scriptscriptstyle{d,N}},
\end{align*}
is a densely defined, positive definite, symmetric bilinear form, which is closable on\\ $L^2\big(\overline{\Omega_{\scriptscriptstyle{d,N}}^+};\mu_{\scriptscriptstyle{N^d,\replaced{\beta}{s},\varrho}}\big)$. Its closure $\big(\mathcal{E}^{N^d,\replaced{\beta}{s},\varrho},D(\mathcal{E}^{N^d,\replaced{\beta}{s},\varrho})\big)$ is a \added{recurrent, hence in particular conservative}, strongly local, strongly regular, symmetric Dirichlet form on $L^2\big(\overline{\Omega_{\scriptscriptstyle{d,N}}^+};\mu_{\scriptscriptstyle{N^d,\replaced{\beta}{s},\varrho}}\big)$. 
\end{theorem}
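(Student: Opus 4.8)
The plan is to recognize this application as the general framework of Section~\ref{sectfuana} specialized to $n=N^d$, with state space $\overline{\Omega_{\scriptscriptstyle{d,N}}^+}=[0,\infty)^{N^d}$, index set $I=D_{\scriptscriptstyle{d,N}}$ and the Gibbs density $\varrho=\varrho^{\scriptscriptstyle{V,\beta}}_{\scriptscriptstyle{d,N}}$. Indeed, the product measure $\prod_{x}(\beta\,d\delta_0(x)+d\phi_+(x))$ is exactly $m_{\scriptscriptstyle{N^d,\beta}}$ of (\ref{defmeasure}), and the form (\ref{formapp}) is the form (\ref{form1}) in these coordinates. Hence the entire statement will follow from Theorem~\ref{theosumdiri} together with the intrinsic-metric computation in Lemma~\ref{lemcap}, once the abstract Conditions~\ref{conddensity}, \ref{condHamza} and \ref{condweakdiff} are verified.

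First I would check the abstract conditions, which is the assertion of Remark~\ref{remcondapp}. Since $V\in C^1(\mathbb{R})$ and $H^{\scriptscriptstyle{V}}_{\scriptscriptstyle{d,N}}$ is a finite sum of $V$ evaluated at differences of height variables, the density $\varrho=(Z_{\scriptscriptstyle{d,N}}^{\scriptscriptstyle{V,\beta}})^{-1}\exp(-H^{\scriptscriptstyle{V}}_{\scriptscriptstyle{d,N}})$ is continuously differentiable and strictly positive on $\overline{\Omega_{\scriptscriptstyle{d,N}}^+}$; this is Condition~\ref{condweakdiff}(ii), and by Remark~\ref{remcondequi}(iv) it forces Condition~\ref{condHamza}. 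The lower bound $V\ge -b$ makes $\varrho$ bounded, while $\kappa=\int_{\mathbb{R}}\exp(-V)\,dr<\infty$ gives $Z_{\scriptscriptstyle{d,N}}^{\scriptscriptstyle{V,\beta}}<\infty$, hence $\varrho\in L^1$, i.e.\ Condition~\ref{conddensity}. For Condition~\ref{condweakdiff}(i) I would invoke the equivalence of Remark~\ref{remcondequi}(ii): a short computation using the symmetry of $V$ gives $\partial_x\ln\varrho=-\partial_x H^{\scriptscriptstyle{V}}_{\scriptscriptstyle{d,N}}=-\mathbb{V}'(x,\cdot)$, so the requirement $(\partial_x\ln\varrho)_{x}\in L^2(\mu_{\scriptscriptstyle{N^d,\beta,\varrho}})$ is precisely Condition~\ref{cond2}.

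With all abstract conditions in force, Theorem~\ref{theosumdiri} (applied with $n=N^d$) immediately yields that (\ref{formapp}) is densely defined, positive definite, symmetric and closable, and that its closure is a recurrent --- hence conservative --- strongly local, regular, symmetric Dirichlet form on $L^2(\overline{\Omega_{\scriptscriptstyle{d,N}}^+};\mu_{\scriptscriptstyle{N^d,\beta,\varrho}})$. The only property not already contained in Theorem~\ref{theosumdiri} is \emph{strong} regularity, and this is where the specific geometry of the state space must be used.

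For strong regularity I would reproduce the argument of Lemma~\ref{lemcap}(i). By Proposition~\ref{propenergymeasure} the energy measure of any $f\in D(\mathcal{E})\cap C^0_c(\overline{\Omega_{\scriptscriptstyle{d,N}}^+})$ satisfies
\[ \tfrac12\,\nu_{\scriptscriptstyle{\langle f\rangle}}=\Big(\sum_{\varnothing\neq B\subset D_{\scriptscriptstyle{d,N}}}|\nabla^B f|^2\,\mathbbm{1}_{E_{\scriptscriptstyle{+}}(B)}\Big)\,\mu_{\scriptscriptstyle{N^d,\beta,\varrho}}, \]
so the constraint $\tfrac12\nu_{\scriptscriptstyle{\langle f\rangle}}\le\mu_{\scriptscriptstyle{N^d,\beta,\varrho}}$ in the definition of the intrinsic metric reduces to $|\nabla^B f|\le 1$ on each stratum $E_{\scriptscriptstyle{+}}(B)$. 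Since $\overline{\Omega_{\scriptscriptstyle{d,N}}^+}=[0,\infty)^{N^d}$ is convex, the fundamental theorem of calculus along the segment joining two points shows that the intrinsic distance equals the Euclidean distance $|x-y|$, so the intrinsic topology coincides with the Euclidean one, which is exactly strong regularity. I expect the convexity step to be the only genuinely substantial point; everything else is a direct reduction to the general results, and Condition~\ref{condcomp} is trivial here because $\varrho>0$ everywhere forces $\{\varrho=0\}=\varnothing$ and leaves $\tilde E=E$ connected, cf.\ Remark~\ref{reminv}(iv).
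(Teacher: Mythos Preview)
Your proposal is correct and follows essentially the same approach as the paper: the paper's proof reads ``Use Remark~\ref{remcondapp} and apply Theorem~\ref{theosumdiri}. For strong regularity see the proof of Lemma~\ref{lemcap}(i),'' which is precisely your strategy of verifying the abstract conditions via Remark~\ref{remcondapp} (whose details you spell out), invoking Theorem~\ref{theosumdiri}, and then importing the intrinsic-metric computation from Lemma~\ref{lemcap}(i) for strong regularity. Your added remark that Condition~\ref{condcomp} holds trivially because $\varrho>0$ everywhere is a correct elaboration of part of Remark~\ref{remcondapp}, though note that Condition~\ref{condcomp} is not actually needed for the strong regularity argument itself---it enters only when one wants to deduce that $\{\varrho=0\}$ has capacity zero.
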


\begin{remark}
Note that for functions in $\mathcal{D}$, $l\in\{1,2\}$ and $x\in D_{\scriptscriptstyle{d,N}}$ we denote by $\partial_x^l$ the partial derivative of order $l$ with respect to the variable $\replaced{\phi(x)}{\phi_x}$. In particular, $\partial_x:=\partial_x^1$.
\end{remark}

\begin{proof}[Proof of Theorem \ref{theosumdiriapp}]
Use Remark \ref{remcondapp} and apply Theorem \ref{theosumdiri}. For strong regularity see the proof of Lemma \ref{lemcap}(i).
\end{proof}

For $F\in \mathcal{D}:=C_c^2\big(\overline{\Omega_{\scriptscriptstyle{d,N}}^+}\big)$ and $B \subset D_{\scriptscriptstyle{d,N}}$ we define
\begin{multline*}
\mathcal{L}^BF:={\mathcal{L}}^{d,N,\varrho,B} F:=\sum_{x \in B} \big( \partial^2_x F+ \partial_x F\,\partial_x (\ln\varrho) \big)+ \sum_{x \in D_{\scriptscriptstyle{d,N}} \backslash B} \frac{1}{\replaced{\beta}{s}} \partial_x F\\
 = \Delta^B F + \big( \nabla^B  F, \nabla^B \ln \varrho \big) + \frac{1}{\replaced{\beta}{s}} (\nabla^{D_{\scriptscriptstyle{d,N}} \backslash B} F,e),
\end{multline*}
and
\begin{align*} \mathcal{L}F := \sum_{B \subset D_{\scriptscriptstyle{d,N}}} \mathbbm{1}_{\Omega_{\scriptscriptstyle{d,N,B}}^{+\scriptscriptstyle{\text{,wet}}}} \mathcal{L}^B F,
\end{align*}
where $\nabla^B F:=\sum_{x \in B} \partial_x F\,e_{c(x)}$ with some $c:D_{\scriptscriptstyle{d,N}}\to\{1,\ldots,N^d\}$ bijective and $\replaced{\{e_1, \dots, e_{N^d}\}}{(e_k)_{k\in\{1,\ldots, N^d\}}}$ being the canonical basis in $\mathbb{R}^{N^d}$. Moreover, $\Delta^B F:=\sum_{x \in B} \partial_x^2 F$ for $F \in \mathcal{D}$, $B \subset D_{\scriptscriptstyle{d,N}}$ and $e$ is a vector of length $N^d$ containing only ones.

\begin{proposition}\label{propibpapp}
Suppose Conditions \ref{condpotential} and \ref{cond2} to be satisfied. Then we have the representation $\mathcal{E}^{N^d,\replaced{\beta}{s},\varrho}\big(F,G\big)=\Big(-\mathcal{L} F,G\Big)_{L^2(\overline{\Omega_{\scriptscriptstyle{d,N}}^+};\mu_{N^d,\replaced{\beta}{s},\varrho})}$ for $F, G\in\mathcal{D}$.
\end{proposition}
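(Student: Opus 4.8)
The plan is to recognize the wetting model as a relabeled instance of the abstract framework of Section~\ref{sectfuana} and then to invoke Proposition~\ref{propibp}. First I would fix the bijection $c:D_{d,N}\to\{1,\ldots,N^d\}$ already introduced in the definition of $\nabla^B F$ and use it to identify the space of interfaces $\overline{\Omega_{d,N}^+}=[0,\infty)^{D_{d,N}}$ with $E=[0,\infty)^n$ for $n:=N^d$. Under this identification the height variable $\phi(x)$ becomes the coordinate $x_{c(x)}$, the derivative $\partial_x$ becomes $\partial_{c(x)}$, the wet region $\Omega_{d,N,B}^{+,\text{wet}}$ becomes $E_+(c(B))$, the measure $\mu_{N^d,\beta,\varrho}$ becomes $\mu_{\varrho,n,\beta}$, and the bilinear form $\mathcal{E}^{N^d,\beta,\varrho}$ together with the operators $\mathcal{L}^B$ and $\mathcal{L}$ become exactly $\mathcal{E}^{n,\beta,\varrho}$, $L^B$ and $L$. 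Hence, after this translation, the asserted representation is literally the one established in Proposition~\ref{propibp}.

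The only hypotheses of Proposition~\ref{propibp} that must be checked are those collected in Condition~\ref{condweakdiff}. By Remark~\ref{remcondapp}, Conditions~\ref{condpotential} and \ref{cond2} imply Condition~\ref{condweakdiff}; the crucial point is that since $\varrho=\frac{1}{Z_{d,N}^{V,\beta}}\exp(-H_{d,N}^V)$ one has $\partial_x\ln\varrho=-\mathbb{V}'(x,\cdot)$, so that Condition~\ref{cond2}, namely $\mathbb{V}'(x,\cdot)\in L^2$, is precisely the square-integrability of the logarithmic derivative required by Condition~\ref{condweakdiff}(i) (cf.~Remark~\ref{remcondequi}(ii)). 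Continuous differentiability of $\varrho$ on $\overline{\Omega_{d,N}^+}$, i.e.~Condition~\ref{condweakdiff}(ii), follows from the $C^1$-assumption on $V$ in Condition~\ref{condpotential} together with the chain rule applied to the finite sum $H_{d,N}^V$.

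With Condition~\ref{condweakdiff} in force I would then apply Proposition~\ref{propibp} verbatim in the variables $\phi(x)$, $x\in D_{d,N}$. Equivalently, one may simply repeat the inductive integration-by-parts computation (\ref{ibp}) from the proof of Proposition~\ref{propibp}, integrating successively in the coordinates indexed by $D_{d,N}\setminus B$ and collecting the Wentzell boundary terms with the combinatorial factors $\beta^{N^d-\#B}$ coming from the definition of $\lambda_B^{N^d,\beta}$ in (\ref{defmeasure}); these boundary contributions reassemble into $\frac{1}{\beta}(\nabla^{D_{d,N}\setminus B}F,e)$, yielding $\mathcal{E}^{N^d,\beta,\varrho}(F,G)=(-\mathcal{L}F,G)_{L^2}$. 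I expect the only genuine obstacle to be purely notational, namely keeping track of the bijection $c$ and of the factors $\beta^{N^d-\#B}$ when transferring the boundary terms, since the analytic substance is already contained in Proposition~\ref{propibp}.
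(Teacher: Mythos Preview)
Your proposal is correct and follows exactly the paper's approach: the paper's proof consists of the single line ``Use Remark~\ref{remcondapp} and apply Proposition~\ref{propibp},'' and your argument is precisely an unpacking of this, identifying $\overline{\Omega_{d,N}^+}$ with $E=[0,\infty)^{N^d}$ via the bijection $c$ and verifying Condition~\ref{condweakdiff} from Conditions~\ref{condpotential} and~\ref{cond2}.
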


\begin{proof}
Use Remark \ref{remcondapp} and apply Proposition \ref{propibp}.
\end{proof}

\begin{remark}
Let $\mathcal{L}_1^B:=\Delta^B  + \big( \nabla^B  , \nabla^B \ln \varrho \big)$ and $\mathcal{L}_2^B :=(\nabla^B,e)$. Using this notation we can express $\mathcal{L}$ in the form
\begin{align*} \mathcal{L}F&= \sum_{B \subset D_{\scriptscriptstyle{d,N}}} \mathbbm{1}_{\Omega_{\scriptscriptstyle{d,N,B}}^{+\scriptscriptstyle{\text{,wet}}}}  (\mathcal{L}_1^B F + \frac{1}{\replaced{\beta}{s}} \mathcal{L}_2^{D_{\scriptscriptstyle{d,N}} \backslash B} F) \\
&= \added{\mathbbm{1}_{\Omega_{\scriptscriptstyle{d,N,I}}^{+\scriptscriptstyle{\text{,wet}}}}} \mathcal{L}_1^{D_{\scriptscriptstyle{d,N}}} F + \sum_{B \subsetneq D_{\scriptscriptstyle{d,N}}} \mathbbm{1}_{\Omega_{\scriptscriptstyle{d,N,B}}^{+\scriptscriptstyle{\text{,wet}}}} (-\mathcal{L}_1^{D_{\scriptscriptstyle{d,N}} \backslash B}F + \frac{1}{\replaced{\beta}{s}} \mathcal{L}_2^{D_{\scriptscriptstyle{d,N}} \backslash B}F),\quad F\in\mathcal{D}.
\end{align*}
The interpretation of $\mathcal{L}$ is that on $\Omega_{\scriptscriptstyle{d,N,B}}^{+\scriptscriptstyle{\text{,wet}}}$ the operator $\mathcal{L}_1^B$ describes the dynamics of the height variables $\replaced{\phi(x)}{\phi_x}$, $x\in B$, by means of a diffusive and a drift term whereas the operator $\frac{1}{\replaced{\beta}{s}} \mathcal{L}_2^{D_{\scriptscriptstyle{d,N}} \backslash B}$ forces the remaining height variables $\phi_x$, $x \in D_{\scriptscriptstyle{d,N}} \backslash B$ with constant drift $\frac{1}{\replaced{\beta}{s}}$ back to positive height. The operator $-\mathcal{L}_1^B + \frac{1}{\replaced{\beta}{s}} \mathcal{L}_2^B$ for $B \neq \varnothing$ is called a \textit{Wentzell type boundary operator}. The associated Cauchy problem can be formulated in the form
\begin{align}
\label{PDE2}
\left\{
\begin{array}{l l}
& \frac{\partial}{\partial t} U_t(\phi) = \Delta U_t(\phi) + \big(\nabla U_t(\phi), \nabla (\ln \varrho)(\phi)\big), \quad \quad t>0, \ \phi \in \overline{\Omega_{\scriptscriptstyle{d,N}}^+},  \\
& \partial_x^2 U_t(\phi) + \partial_x U_t(\phi) \partial_x (\ln \varrho)(\phi)  - \frac{1}{\replaced{\beta}{s}} \partial_x U_t(\phi)=0, \quad t >0, x \in D_{\scriptscriptstyle{d,N}},  \ \phi \in \overline{\Omega_{\scriptscriptstyle{d,N}}^+} \cap \{\phi_x=0\}, \\
& U_0(\phi)=F(\phi)
\end{array}
\right.
\end{align}
The second line of (\ref{PDE2}) is called \textit{Wentzell boundary condition (for the $x$-th height variable)}.
\end{remark}

\begin{theorem}\label{theoprocessapp}
Suppose that Conditions \ref{condpotential} and \ref{cond2} are satisfied. Then there exists a conservative diffusion process (i.e.~a strong Markov process with continuous sample paths and infinite life time)
\begin{align*}
\mathbf{M}^{N^d,\replaced{\beta}{s},\varrho}=\left(\mathbf{\Omega},\mathbf{F},(\mathbf{F}_t)_{t\ge 0},(\replaced{\boldsymbol{\phi}_t}{\mathbf{X}_t})_{t\ge 0},(\mathbf{\Theta}_t)_{t\ge 0},(\mathbf{P}^{N^d,\replaced{\beta}{s},\varrho}_{\phi})_{\phi\in \overline{\Omega_{\scriptscriptstyle{d,N}}^+}}\right)
\end{align*}
with state space $\overline{\Omega_{\scriptscriptstyle{d,N}}^+}$ which is \deleted{properly} associated with $\big(\mathcal{E}^{N^d,\replaced{\beta}{s},\varrho},D(\mathcal{E}^{N^d,\replaced{\beta}{s},\varrho})\big)$.
$\mathbf{M}^{N^d,\replaced{\beta}{s},\varrho}$ is up to $\mu_{N^d,\replaced{\beta}{s},\varrho}$-equivalence unique. In particular, $\mathbf{M}^{N^d,\replaced{\beta}{s},\varrho}$ is $\mu_{N^d,\replaced{\beta}{s},\varrho}$-symmetric and has $\mu_{N^d,\replaced{\beta}{s},\varrho}$ as invariant\added{ and reversible} measure.
\end{theorem}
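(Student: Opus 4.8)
The plan is to recognize the dynamical wetting model as a special instance of the abstract framework developed in Sections \ref{sectfuana}--\ref{sectprocess}, with the spatial dimension specialized to $n=N^d$, and then to invoke Theorem \ref{theoprocess} verbatim.

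First I would fix the bijection $c:D_{\scriptscriptstyle{d,N}}\to\{1,\ldots,N^d\}$ already introduced before Proposition \ref{propibpapp} and use it to identify the space of interfaces $\overline{\Omega_{\scriptscriptstyle{d,N}}^+}=[0,\infty)^{D_{\scriptscriptstyle{d,N}}}$ with the state space $E=E_{N^d}=[0,\infty)^{N^d}$ of the general theory. Under this identification the wet-region stratum $\Omega_{\scriptscriptstyle{d,N,B}}^{+\scriptscriptstyle{\text{,wet}}}$ corresponds to $E_{\scriptscriptstyle{+}}(c(B))$, the reference measure $\mu_{N^d,\beta,\varrho}$ corresponds to $\mu_{\varrho,N^d,\beta}=\varrho\,m_{N^d,\beta}$, and the bilinear form $\big(\mathcal{E}^{N^d,\beta,\varrho},\mathcal{D}\big)$ from (\ref{formapp}) coincides with the form $\big(\mathcal{E},\mathcal{D}\big)$ of (\ref{form1}) for $n=N^d$, since $\partial_x F$ for $x\in D_{\scriptscriptstyle{d,N}}$ is relabelled as $\partial_{c(x)}f$. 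Thus every object appearing in the application is literally the corresponding object of the general construction.

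Second I would verify that the hypotheses of Theorem \ref{theoprocess}, namely Conditions \ref{conddensity} and \ref{condHamza}, are in force. By Remark \ref{remcondapp}, Conditions \ref{condpotential} and \ref{cond2} imply Condition \ref{condweakdiff}. By Remark \ref{remcondequi}(iv), Condition \ref{condweakdiff}(ii) forces $\varrho$ to be continuous on $E$, whence Condition \ref{condHamza} holds, while Condition \ref{condweakdiff}(i) yields Condition \ref{conddensity}. Hence both Conditions \ref{conddensity} and \ref{condHamza} are satisfied, and moreover Theorem \ref{theosumdiriapp} already guarantees that the closure $\big(\mathcal{E}^{N^d,\beta,\varrho},D(\mathcal{E}^{N^d,\beta,\varrho})\big)$ is a recurrent (hence conservative), strongly local, strongly regular, symmetric Dirichlet form on $L^2\big(\overline{\Omega_{\scriptscriptstyle{d,N}}^+};\mu_{N^d,\beta,\varrho}\big)$.

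With these verifications in hand, Theorem \ref{theoprocess} applied with $n=N^d$ produces a conservative diffusion process associated with $\big(\mathcal{E}^{N^d,\beta,\varrho},D(\mathcal{E}^{N^d,\beta,\varrho})\big)$, unique up to $\mu_{N^d,\beta,\varrho}$-equivalence, which is $\mu_{N^d,\beta,\varrho}$-symmetric and admits $\mu_{N^d,\beta,\varrho}$ as reversible invariant measure; transporting this process back along $c$ and relabelling its coordinate process as the height field $(\boldsymbol{\phi}_t)_{t\ge 0}$ gives exactly the asserted process $\mathbf{M}^{N^d,\beta,\varrho}$. I do not expect a genuine obstacle here: the entire substantive content sits in the general Theorem \ref{theoprocess} together with the implications among conditions collected in Remarks \ref{remcondequi} and \ref{remcondapp}; the only care required is the bookkeeping of the identification via $c$ and the observation that the reversibility statement is merely a restatement of $\mu_{N^d,\beta,\varrho}$-symmetry.
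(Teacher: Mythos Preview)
Your proposal is correct and follows precisely the paper's approach: the paper's proof is the one-line ``Use Remark \ref{remcondapp} and apply Theorem \ref{theoprocess},'' and you have merely spelled out in detail the identification via $c$ and the chain of implications among the conditions that this invocation implicitly relies on.
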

\begin{proof}
Use Remark \ref{remcondapp} and apply Theorem \ref{theoprocess}.
\end{proof}

\begin{theorem}\label{theomartingaleapp}
The diffusion process $\mathbf{M}^{N^d,\replaced{\beta}{s},\varrho}$ from Theorem \ref{theoprocessapp} is up to $\mu_{N^d,\replaced{\beta}{s},\varrho}$-equivalence the unique diffusion process having $\mu_{N^d,\replaced{\beta}{s},\varrho}$ as symmetrizing measure and solving the martingale problem for $\big(\mathcal{H}^{N^d,{\replaced{\beta}{s}},\varrho},D(\mathcal{H}^{N^d,\replaced{\beta}{s},\varrho})\big)$, i.e., for all $G\in D(\mathcal{H}^{N^d,\replaced{\beta}{s},\varrho})$
\begin{align*}
\widetilde{G}(\replaced{\boldsymbol{\phi}_t}{\mathbf{X}_t})-\widetilde{G}(\replaced{\boldsymbol{\phi}_0}{\mathbf{X}_0})+\int_0^t \Big(\mathcal{H}^{N^d,{\replaced{\beta}{s}},\varrho}G\Big)(\replaced{\boldsymbol{\phi}_s}{\mathbf{X}_{\tau}})\,d\replaced{s}{\tau},\quad t\ge 0,
\end{align*}
is an $\mathbf{F}_t$-martingale under $\mathbf{P}^{N^d,\replaced{\beta}{s},\varrho}_\phi$ for \deleted{$\mathcal{E}^{N^d,\replaced{s}{\beta},\varrho}$-}quasi all $\phi\in \overline{\Omega_{\scriptscriptstyle{d,N}}^+}$.
\end{theorem}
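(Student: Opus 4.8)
The plan is to obtain the assertion as a direct specialization of the general martingale characterization in Theorem \ref{theomartingale}, exactly as Theorems \ref{theosumdiriapp} and \ref{theoprocessapp} were obtained from their abstract counterparts. First I would make the identification between the two settings explicit by means of the bijection $c:D_{\scriptscriptstyle{d,N}}\to\{1,\ldots,N^d\}$ fixed above: it identifies the interface space $\overline{\Omega_{\scriptscriptstyle{d,N}}^+}=[0,\infty)^{D_{\scriptscriptstyle{d,N}}}$ with the abstract state space $E=[0,\infty)^{N^d}$, the reference measure $\mu_{N^d,\beta,\varrho}$ with $\mu_{\varrho,N^d,\beta}$, and the site derivative $\partial_x$ with the coordinate derivative $\partial_{c(x)}$. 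Under this relabeling the form $\mathcal{E}^{N^d,\beta,\varrho}$ of (\ref{formapp}) coincides with the form $\mathcal{E}$ of (\ref{form1}) for $n=N^d$, so that its Friedrichs generator $\big(\mathcal{H}^{N^d,\beta,\varrho},D(\mathcal{H}^{N^d,\beta,\varrho})\big)$ is precisely the operator $\big(H,D(H)\big)$ from Proposition \ref{propgen} in the case $n=N^d$.

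Next I would verify that the abstract hypotheses hold in this instance. By Remark \ref{remcondapp}, Conditions \ref{condpotential} and \ref{cond2} imply Condition \ref{condweakdiff}; since $V\in C^1(\mathbb{R})$ the density $\varrho$ is continuously differentiable, so Remark \ref{remcondequi}(iv) yields that Conditions \ref{conddensity} and \ref{condHamza} are satisfied as well. In particular, the diffusion $\mathbf{M}^{N^d,\beta,\varrho}$ of Theorem \ref{theoprocessapp} is the process furnished by Theorem \ref{theoprocess}, and $\mathcal{H}^{N^d,\beta,\varrho}$ is well defined on a domain dense in $L^2\big(\overline{\Omega_{\scriptscriptstyle{d,N}}^+};\mu_{N^d,\beta,\varrho}\big)$.

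With all identifications and hypotheses in place, Theorem \ref{theomartingale} applies verbatim and gives that $\mathbf{M}^{N^d,\beta,\varrho}$ is, up to $\mu_{N^d,\beta,\varrho}$-equivalence, the unique diffusion having $\mu_{N^d,\beta,\varrho}$ as symmetrizing measure for which $\widetilde{G}(\boldsymbol{\phi}_t)-\widetilde{G}(\boldsymbol{\phi}_0)+\int_0^t\big(\mathcal{H}^{N^d,\beta,\varrho}G\big)(\boldsymbol{\phi}_{s})\,ds$ is an $\mathbf{F}_t$-martingale for every $G\in D(\mathcal{H}^{N^d,\beta,\varrho})$ and quasi every starting point $\phi\in\overline{\Omega_{\scriptscriptstyle{d,N}}^+}$, with $\widetilde{G}$ the quasi-continuous version of $G$. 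I expect the only genuine (and still minor) obstacle to be the bookkeeping of the relabeling $c$: one has to check that the Wentzell-type boundary structure indexed by the lattice sites $x\in D_{\scriptscriptstyle{d,N}}$ transports correctly to the coordinate indices $i\in I_{N^d}$, so that $\mathcal{H}^{N^d,\beta,\varrho}$ equals $H$ on the nose and not merely up to unitary equivalence. Once this is confirmed no computation remains, and the proof reduces to the single line ``use Remark \ref{remcondapp} and apply Theorem \ref{theomartingale}.''
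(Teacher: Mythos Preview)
Your proposal is correct and follows exactly the paper's approach: the paper's entire proof is the single line ``Use Remark \ref{remcondapp} and apply Theorem \ref{theomartingale}.'' Your more detailed unpacking of the identification via the bijection $c$ and the verification of the abstract hypotheses is a careful elaboration of precisely this reduction, and indeed you arrive at the same one-line conclusion.
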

\begin{proof}
Use Remark \ref{remcondapp} and apply Theorem \ref{theomartingale}.
\end{proof}

\begin{corollary}\label{coroskoroapp}
Suppose that Conditions \ref{condpotential} and \ref{cond2} are satisfied. Let $x\in D_{\scriptscriptstyle{d,N}}$. We denote by $\pi_x:\overline{\Omega_{\scriptscriptstyle{d,N}}^+}\to[0,\infty)$, $\phi\mapsto \replaced{\phi(x)}{\phi_x}$, the projection on the $x$-th coordinate. The coordinate processes $\big(\replaced{\boldsymbol{\phi}_t(x)}{\mathbf{X}_t^{x}}\big)_{t\ge 0}:=\big(\pi_x(\replaced{\boldsymbol{\phi}_t}{\mathbf{X}_{t}})\big)_{t\ge 0}$ corresponding to $\mathbf{M}^{N^d,\replaced{\beta}{s},\varrho}$ is given by
\begin{multline}\label{repsolapp}
\replaced{\boldsymbol{\phi}_t(x)-\boldsymbol{\phi}_0(x)}{\mathbf{X}_{t}^{x}-\mathbf{X}_0^{x}}=\sqrt{2}\,\int_0^t\mathbbm{1}_{\Omega_{\scriptscriptstyle{d,N}}^+}\big(\replaced{\boldsymbol{\phi}_s}{\mathbf{X}_{\tau}}\big)\,d\replaced{B_s(x)}{B^x_{\tau}}-\int_{0}^{t}\mathbb{V}'\big(x,\replaced{\boldsymbol{\phi}_s}{\mathbf{X}_{\tau}}\big)\mathbbm{1}_{\Omega_{\scriptscriptstyle{d,N}}^+}\big(\replaced{\boldsymbol{\phi}_s}{\mathbf{X}_{\tau}}\big)\,d\replaced{s}{\tau}\\
+\sum_{\varnothing\not=B\subsetneq D_{\scriptscriptstyle{d,N}}}\left\{\begin{array}{ll}
  \sqrt{2}\,\int_0^t\mathbbm{1}_{\Omega_{\scriptscriptstyle{d,N,B}}^{+\scriptscriptstyle{\text{,wet}}}}\big(\replaced{\boldsymbol{\phi}_s}{\mathbf{X}_{\tau}}\big)\,d\replaced{B_s(x)}{B^x_{\tau}}-\int_0^{t} \mathbb{V}'\big(x,\replaced{\boldsymbol{\phi}_s}{\mathbf{X}_{\tau}}\big)\mathbbm{1}_{\Omega_{\scriptscriptstyle{d,N,B}}^{+\scriptscriptstyle{\text{,wet}}}}\big(\replaced{\boldsymbol{\phi}_s}{\mathbf{X}_{\tau}}\big)\,d\replaced{s}{\tau}, & \text{if }x\in B\\
  \frac{1}{\replaced{\beta}{s}}\,\int_0^{t}\mathbbm{1}_{\Omega_{\scriptscriptstyle{d,N,B}}^{+\scriptscriptstyle{\text{,wet}}}}\big(\replaced{\boldsymbol{\phi}_s}{\mathbf{X}_{\tau}}\big)\,d\replaced{s}{\tau}, & \text{if }x\in D_{\scriptscriptstyle{d,N}}\setminus B
  \end{array}\right.\\
+\frac{1}{\replaced{\beta}{s}}\int_0^{t}\mathbbm{1}_{\{(0,\ldots,0)\}}\big(\replaced{\boldsymbol{\phi}_s}{\mathbf{X}_{\tau}}\big)\,d\replaced{s}{\tau},
\end{multline}
where $(\replaced{B_t(x)}{B^x_t})_{t\ge 0}$\added{, $x\in D_{\scriptscriptstyle{d,N}}$,} \replaced{are}{is a} one dimensional independent standard Brownian motion\added{s} and
\begin{align*}
\mathbb{V}'(x,\phi):=\sum_{\stackunder{|x-y|=1}{y\in{{\overline{D_{\scriptscriptstyle{d,N}}}}}}}V'\big(\replaced{\phi(x)-\phi(y)}{\phi_x-\phi_y}\big),\quad\phi\in \overline{\Omega_{\scriptscriptstyle{d,N}}^+},
\end{align*}
with pair interaction potential $V$.
\end{corollary}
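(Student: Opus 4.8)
The plan is to deduce this corollary directly from the general Skorokhod decomposition in Corollary \ref{coroskoro}, after identifying the wetting setup with the abstract framework of Sections \ref{sectfuana}--\ref{sectanapro}. First I would invoke Remark \ref{remcondapp}, which guarantees that Conditions \ref{condpotential} and \ref{cond2} imply Condition \ref{condweakdiff}; hence all hypotheses of Theorem \ref{theorep1} and Corollary \ref{coroskoro} are satisfied for $n=N^d$ and the density $\varrho=\varrho^{\scriptscriptstyle{V,\beta}}_{\scriptscriptstyle{d,N}}$. Under the bijection $c:D_{\scriptscriptstyle{d,N}}\to\{1,\dots,N^d\}$ the state space $\overline{\Omega^+_{\scriptscriptstyle{d,N}}}$ is identified with $E=[0,\infty)^{N^d}$, the interior $\Omega^+_{\scriptscriptstyle{d,N}}$ with $\mathring{E}$, and each wet region $\Omega^{+\scriptscriptstyle{\text{,wet}}}_{\scriptscriptstyle{d,N,B}}$ with $E_+(c(B))$, so that the coordinate process $\pi_x$ corresponds to $\pi_{c(x)}$ in Corollary \ref{coroskoro}.

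The only substantial computation is the identification of the logarithmic drift. Since $\varrho(\phi)=\frac{1}{Z^{\scriptscriptstyle{V,\beta}}_{\scriptscriptstyle{d,N}}}\exp\big(-H^{\scriptscriptstyle{V}}_{\scriptscriptstyle{d,N}}(\phi)\big)$, we have $\ln\varrho(\phi)=-H^{\scriptscriptstyle{V}}_{\scriptscriptstyle{d,N}}(\phi)-\ln Z^{\scriptscriptstyle{V,\beta}}_{\scriptscriptstyle{d,N}}$, whence $\partial_x\ln\varrho(\phi)=-\partial_x H^{\scriptscriptstyle{V}}_{\scriptscriptstyle{d,N}}(\phi)$. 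Differentiating the Hamiltonian (\ref{equHam}) in the height variable $\phi(x)$, the prefactor $\tfrac12$ is exactly compensated by the two ordered occurrences of each nearest-neighbor bond: a term $V(\phi(x)-\phi(y))$ contributes $V'(\phi(x)-\phi(y))$, while a term $V(\phi(y)-\phi(x))$ contributes $-V'(\phi(y)-\phi(x))$, and the symmetry $V(-r)=V(r)$ gives $V'(-r)=-V'(r)$, so the two contributions coincide. This yields
\begin{align*}
\partial_x H^{\scriptscriptstyle{V}}_{\scriptscriptstyle{d,N}}(\phi)=\sum_{\stackunder{|x-y|=1}{y\in\overline{D_{\scriptscriptstyle{d,N}}}}}V'\big(\phi(x)-\phi(y)\big)=\mathbb{V}'(x,\phi),
\end{align*}
and therefore $\partial_x\ln\varrho(\phi)=-\mathbb{V}'(x,\phi)$.

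Substituting $\partial_{c(x)}\ln(\varrho)=-\mathbb{V}'(x,\cdot)$ into the representation (\ref{rep}) and translating the index sets through $c$ turns each diffusive term $\sqrt{2}\,dB^{c(x)}$ into $\sqrt{2}\,dB(x)$, each drift $\partial_{c(x)}\ln(\varrho)\,ds$ into $-\mathbb{V}'(x,\cdot)\,ds$, and leaves the pinning terms $\tfrac{1}{\beta}\mathbbm{1}_{\{\cdots\}}\,ds$ unchanged, producing exactly (\ref{repsolapp}); the independence of the Brownian motions $\big(B(x)\big)_{x\in D_{\scriptscriptstyle{d,N}}}$ is inherited from Corollary \ref{coroskoro}. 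I do not expect a genuine obstacle here, as the statement is a transcription of the abstract decomposition into the wetting notation. The one place demanding care is the sign and factor bookkeeping in differentiating $H^{\scriptscriptstyle{V}}_{\scriptscriptstyle{d,N}}$: one must differentiate only in the interior variables $\phi(x)$, $x\in D_{\scriptscriptstyle{d,N}}$, while holding the boundary heights $\phi(y)=0$, $y\in\partial D_{\scriptscriptstyle{d,N}}$, fixed, so that bonds with one endpoint on $\partial D_{\scriptscriptstyle{d,N}}$ still enter $\mathbb{V}'(x,\cdot)$ in accordance with Condition \ref{cond2}.
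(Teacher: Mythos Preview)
Your proposal is correct and follows the same approach as the paper, which simply says ``Use Remark \ref{remcondapp} and apply Corollary \ref{coroskoro}.'' You have additionally spelled out the identification $\partial_x\ln\varrho=-\mathbb{V}'(x,\cdot)$ and the dictionary between the wetting notation and the abstract setup, which is exactly the content implicit in the paper's one-line proof.
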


\begin{proof}
Use Remark \ref{remcondapp} and apply Corollary \ref{coroskoro}.
\end{proof}

\begin{remark}
(\ref{repsolapp}) provides a weak solution to (\ref{sde}) for \deleted{$\mathcal{E}^{N^d,\replaced{\beta}{s},\varrho}$-}quasi every starting point in $\overline{\Omega_{\scriptscriptstyle{d,N}}^+}$, even for boundary points.
\end{remark}

\begin{theorem}\label{theoergoapp}
Suppose that Conditions \ref{condpotential} and \ref{cond2} are satisfied. For all $F \in L^1\big(\overline{\Omega_{\scriptscriptstyle{d,N}}^+};\mu_{\scriptscriptstyle{N^d,\replaced{\beta}{s},\varrho}}\big)$ it holds that
\begin{align*}
\lim_{t \rightarrow \infty} \frac{1}{t} \int_0^t F(\replaced{\boldsymbol{\phi}_s}{\mathbf{X}_{\tau}}) d\replaced{s}{\tau}=\int_{\Omega_{\scriptscriptstyle{d,N}}^+} F d\mu_{\scriptscriptstyle{N^d,\replaced{\beta}{s},\varrho}}
\end{align*}
$\mathbf{P}^{\scriptscriptstyle{N^d,\replaced{\beta}{s},\varrho}}_\phi$-a.s. for q.e. $\phi \in \overline{\Omega_{\scriptscriptstyle{d,N}}^+}$.
\end{theorem}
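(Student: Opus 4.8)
The plan is to recognize Theorem \ref{theoergoapp} as the specialization of the abstract ergodic statement, Theorem \ref{spendtime}, to the wetting setting with $n=N^d$ and state space $E = [0,\infty)^{N^d} = \overline{\Omega_{d,N}^+}$. First I would invoke Remark \ref{remcondapp}, which guarantees that Conditions \ref{condpotential} and \ref{cond2} imply Conditions \ref{condweakdiff} and \ref{condcomp}. Hence all hypotheses needed to apply Theorem \ref{spendtime} are in force, and the diffusion $\mathbf{M}^{N^d,\beta,\varrho}$ constructed in Theorem \ref{theoprocessapp} is precisely the process $\mathbf{M}$ to which Theorem \ref{spendtime} applies, with $(\boldsymbol{\phi}_t)_{t\ge 0}$ in the role of the coordinate dynamics.

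The observation that collapses the general statement to the clean form asserted here is the strict positivity of the density. Since $\varrho(\phi) = \tfrac{1}{Z_{d,N}^{V,\beta}} \exp\big(-H_{d,N}^V(\phi)\big) > 0$ for every $\phi \in \overline{\Omega_{d,N}^+}$, the exceptional set $\{\varrho = 0\}$ is empty, so that $\tilde{E} = \overline{\Omega_{d,N}^+}\setminus\{\varrho=0\} = [0,\infty)^{N^d}$. This set is convex, hence connected, so by Remark \ref{reminv}(iv) the index set $\mathcal{I}$ consists of a single element and its unique connected component $A_1$ equals the whole space $\overline{\Omega_{d,N}^+}$. Thus the decomposition of the state space into $T_t$-invariant pieces that complicates the general result is trivial in the present case.

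Next I would use that $\mu_{N^d,\beta,\varrho} = \mu_{d,N}^{V,\beta}$ is a \emph{probability} measure, normalized through $Z_{d,N}^{V,\beta}$; in particular $\mu_{N^d,\beta,\varrho}(A_1) = \mu_{N^d,\beta,\varrho}(\overline{\Omega_{d,N}^+}) = 1$. Applying Theorem \ref{spendtime} with $i$ the single index of $\mathcal{I}$ and $f = F \in L^1(A_1;\mu_{N^d,\beta,\varrho}) = L^1(\overline{\Omega_{d,N}^+};\mu_{N^d,\beta,\varrho})$ then gives, for q.e.\ $\phi \in \overline{\Omega_{d,N}^+}$,
\begin{align*}
\lim_{t \rightarrow \infty} \frac{1}{t} \int_0^t F(\boldsymbol{\phi}_s)\, ds
= \frac{\int_{A_1} F\, d\mu_{N^d,\beta,\varrho}}{\mu_{N^d,\beta,\varrho}(A_1)}
= \int_{\overline{\Omega_{d,N}^+}} F\, d\mu_{N^d,\beta,\varrho},
\end{align*}
$\mathbf{P}_\phi^{N^d,\beta,\varrho}$-a.s., which is the asserted identity. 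There is essentially no analytic obstacle in this argument; the only points requiring care are verifying that strict positivity of $\varrho$ forces $\mathcal{I}$ to be a singleton equal to the entire configuration space, and that the probability normalization of $\mu_{N^d,\beta,\varrho}$ makes the denominator in Theorem \ref{spendtime} equal to one, so that the ergodic average coincides with the full spatial average against the invariant measure.
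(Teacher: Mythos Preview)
Your proposal is correct and follows exactly the paper's approach: invoke Remark \ref{remcondapp} to verify the abstract hypotheses and then apply Theorem \ref{spendtime}. You have usefully spelled out the two points the paper leaves implicit, namely that strict positivity of $\varrho$ collapses $\mathcal{I}$ to a single component equal to the whole state space and that the probability normalization makes the denominator in Theorem \ref{spendtime} equal to one.
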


\begin{proof}
Use Remark \ref{remcondapp} and apply Theorem \ref{spendtime}.
\end{proof}

\begin{corollary} \label{corospendtimeapp}
Under the Conditions of Theorem \ref{theoergoapp} we have that for all measurable $\Gamma\subset \partial \overline{\Omega_{\scriptscriptstyle{d,N}}^+}=\dot{\bigcup}_{B\subsetneq D_{\scriptscriptstyle{d,N}}}\Omega_{\scriptscriptstyle{d,N,B}}^{+\scriptscriptstyle{\text{,wet}}}$ it holds that
\begin{align*}
\lim_{t \rightarrow \infty} \frac{1}{t} \int_0^{t}\mathbbm{1}_{\Gamma}\big(\replaced{\boldsymbol{\phi}_s}{\mathbf{X}_{\tau}}\big)\,d\replaced{s}{\tau}= \mu_{\scriptscriptstyle{N^d,\replaced{\beta}{s},\varrho}}\big(\Gamma \big)
\end{align*}
$\mathbf{P}^{\scriptscriptstyle{N^d,\replaced{\beta}{s},\varrho}}_\phi$-a.s. for q.e. $\phi \in \overline{\Omega_{\scriptscriptstyle{d,N}}^+}$. In particular, under the condition that $\mu_{\scriptscriptstyle{N^d,\replaced{\beta}{s},\varrho}}(\Gamma) >0$ for q.e.~$\phi\in \overline{\Omega_{\scriptscriptstyle{d,N}}^+}$ and $\mathbf{P}^{\scriptscriptstyle{N^d,\replaced{\beta}{s},\varrho}}_\phi$-a.a.~$\omega\in\mathbf{\Omega}$  there exists $T(\omega,\phi)\in [0,\infty)$ and $c(\omega,\phi)\in(0,\infty)$ such that
\begin{align}\label{proportionalapp}
\int_0^{t}\mathbbm{1}_{\Gamma}\big(\replaced{\boldsymbol{\phi}_s(\omega)}{\mathbf{X}_{\tau}(\omega)}\big)\,d\replaced{s}{\tau} \ge t\, c(\omega,\phi) \quad\text{for all}\quad t\ge T(\omega,\phi).
\end{align}
\end{corollary}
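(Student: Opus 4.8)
The plan is to read off both assertions from the ergodic Theorem \ref{theoergoapp} by specializing its integrable function to an indicator. First I would note that, since $0 \leq \mathbbm{1}_\Gamma \leq 1$ and $\mu_{\scriptscriptstyle{N^d,\beta,\varrho}}$ is a probability measure, the function $F:=\mathbbm{1}_\Gamma$ belongs to $L^1\big(\overline{\Omega_{\scriptscriptstyle{d,N}}^+};\mu_{\scriptscriptstyle{N^d,\beta,\varrho}}\big)$, so Theorem \ref{theoergoapp} is applicable. Its right-hand side then evaluates to $\int \mathbbm{1}_\Gamma \, d\mu_{\scriptscriptstyle{N^d,\beta,\varrho}} = \mu_{\scriptscriptstyle{N^d,\beta,\varrho}}(\Gamma)$, which is precisely the claimed limit, holding $\mathbf{P}^{\scriptscriptstyle{N^d,\beta,\varrho}}_\phi$-a.s.\ for q.e.\ $\phi \in \overline{\Omega_{\scriptscriptstyle{d,N}}^+}$.

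At this point it is worth recording why no normalizing factor $\mu(A_i)^{-1}$ survives, in contrast to the general Corollary \ref{corospendtime}. Under Condition \ref{condpotential} the Hamiltonian $H^{\scriptscriptstyle{V}}_{\scriptscriptstyle{d,N}}$ in (\ref{equHam}) is a finite sum of real values (and $V \geq -b$), so it is everywhere finite and hence $\varrho = \big(Z_{\scriptscriptstyle{d,N}}^{\scriptscriptstyle{V,\beta}}\big)^{-1}\exp\big(-H^{\scriptscriptstyle{V}}_{\scriptscriptstyle{d,N}}\big) > 0$ on all of $\overline{\Omega_{\scriptscriptstyle{d,N}}^+}$. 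Thus $\{\varrho = 0\} = \varnothing$, the index set $\mathcal{I}$ reduces to a single element with $A_0 = \overline{\Omega_{\scriptscriptstyle{d,N}}^+}$, and $\mu_{\scriptscriptstyle{N^d,\beta,\varrho}}(A_0) = 1$ by the normalization through $Z_{\scriptscriptstyle{d,N}}^{\scriptscriptstyle{V,\beta}}$. This is exactly the simplification already built into the statement of Theorem \ref{theoergoapp}.

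For the ``in particular'' assertion I would argue directly from the definition of the limit. Assume $\mu_{\scriptscriptstyle{N^d,\beta,\varrho}}(\Gamma) > 0$ and put $c := \tfrac{1}{2}\,\mu_{\scriptscriptstyle{N^d,\beta,\varrho}}(\Gamma) \in (0,\infty)$. By the almost sure convergence just established, for q.e.\ $\phi$ and $\mathbf{P}^{\scriptscriptstyle{N^d,\beta,\varrho}}_\phi$-a.a.\ $\omega$ there is a threshold $T(\omega,\phi) \in [0,\infty)$ such that $\tfrac{1}{t}\int_0^t \mathbbm{1}_\Gamma\big(\boldsymbol{\phi}_s(\omega)\big)\,ds \geq c$ for all $t \geq T(\omega,\phi)$; multiplying by $t$ gives (\ref{proportionalapp}) with $c(\omega,\phi) := c$.

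I do not expect a genuine obstacle, as the corollary is a direct application of the ergodicity result. The only point needing care is the strict positivity of $\varrho$, which guarantees that the entire state space forms one recurrence class and therefore that the long-time occupation fraction of $\Gamma$ equals its unconditional $\mu_{\scriptscriptstyle{N^d,\beta,\varrho}}$-measure.
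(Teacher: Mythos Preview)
Your proposal is correct and follows the paper's intended approach: the paper gives no explicit proof for this corollary, treating it as an immediate consequence of Theorem~\ref{theoergoapp} applied to the indicator $F=\mathbbm{1}_\Gamma$, exactly as you do. Your additional remark explaining why the normalizing factor $\mu_{\varrho,n,\beta}(A_i)^{-1}$ from Corollary~\ref{corospendtime} disappears (namely that $\varrho>0$ pointwise under Condition~\ref{condpotential}, so there is a single component and $\mu_{N^d,\beta,\varrho}$ is a probability measure) is helpful and consistent with the paper's Remark~\ref{remcondapp}.
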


\begin{corollary}\label{namesapp} 
Let $x\in D_{\scriptscriptstyle{d,N}}$ and $B\not=D_{\scriptscriptstyle{d,N}}$.
Then under the Conditions of Theorem \ref{theoergoapp} we have that
\begin{align} \label{jzeroapp}
\lim_{t \rightarrow \infty} \frac{1}{t} \int_0^t \mathbbm{1}_{\{0\}}(\replaced{\boldsymbol{\phi}_s(x)}{X^x_{\tau}}) d\replaced{s}{\tau} = \mu_{\scriptscriptstyle{N^d,\replaced{\beta}{s},\varrho}}\big(\{\replaced{\phi(x)}{\phi_x}=0\}\big)>0 
\end{align}
and
\begin{align*}
\lim_{t \rightarrow \infty} \frac{1}{t} \int_0^t \mathbbm{1}_{\Omega_{\scriptscriptstyle{d,N,B}}^{+\scriptscriptstyle{\text{,wet}}}}(\replaced{\boldsymbol{\phi}_s}{X_{\tau}}) d\replaced{s}{\tau} = \mu_{\scriptscriptstyle{N^d,\replaced{\beta}{s},\varrho}}\big(\Omega_{\scriptscriptstyle{d,N,B}}^{+\scriptscriptstyle{\text{,wet}}}\big) >0 
\end{align*}
$\mathbf{P}^{\scriptscriptstyle{N^d,\replaced{\beta}{s},\varrho}}_\phi$-a.s. for q.e. $\phi \in \overline{\Omega_{\scriptscriptstyle{d,N}}^+}$ and (\ref{proportionalapp}) holds. Moreover, the right hand side of (\ref{jzeroapp}) is increasing in $\replaced{\beta}{s}$, converges to $1$ as $\replaced{\beta}{s} \rightarrow \infty$ and converges to $0$ as $\replaced{\beta}{s} \rightarrow 0$.
\end{corollary}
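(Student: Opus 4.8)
The plan is to obtain the statement by specialising the general results of Section~\ref{sectergo} through the identification of the wetting setting with the abstract one: with $n:=N^d$ and a bijection $c\colon D_{\scriptscriptstyle{d,N}}\to\{1,\dots,N^d\}$, the space $\overline{\Omega^+_{\scriptscriptstyle{d,N}}}$ takes the role of $E$, each wet stratum $\Omega^{+\scriptscriptstyle{\text{,wet}}}_{\scriptscriptstyle{d,N,B}}$ that of $E_{\scriptscriptstyle{+}}(B)$, and $\{\phi(x)=0\}$ that of $\{x_j=0\}$. First I would invoke Remark~\ref{remcondapp}, which guarantees that Conditions~\ref{condpotential} and \ref{cond2} entail Conditions~\ref{condweakdiff} and \ref{condcomp}, so that the whole of Section~\ref{sectergo} applies. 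The decisive structural observation is that, since $V\colon\mathbb{R}\to[-b,\infty)$ is finite-valued, the Hamiltonian $H^{V}_{\scriptscriptstyle{d,N}}$ is finite on all of $\overline{\Omega^+_{\scriptscriptstyle{d,N}}}$, whence $\varrho=(Z^{V,\beta}_{\scriptscriptstyle{d,N}})^{-1}\exp(-H^{V}_{\scriptscriptstyle{d,N}})>0$ pointwise. Consequently $\tilde{E}=\overline{\Omega^+_{\scriptscriptstyle{d,N}}}$ is connected, the index set $\mathcal{I}$ is a singleton, and $\mu_{\scriptscriptstyle{N^d,\beta,\varrho}}$ is a probability measure; in particular the normalising denominators $\mu_{\scriptscriptstyle{N^d,\beta,\varrho}}(\overline{\Omega^+_{\scriptscriptstyle{d,N}}})=1$ occurring in the abstract statements drop out.

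Granting this, both ergodic limits follow at once from Corollary~\ref{corospendtimeapp} (equivalently Corollary~\ref{names}): the first with $\Gamma=\{\phi(x)=0\}$ and the second with $\Gamma=\Omega^{+\scriptscriptstyle{\text{,wet}}}_{\scriptscriptstyle{d,N,B}}$, both of which are boundary sets since $x\notin B$ resp.\ $B\subsetneq D_{\scriptscriptstyle{d,N}}$. The bound (\ref{proportionalapp}) is then exactly the second part of Corollary~\ref{corospendtimeapp}, once positivity of the limits is established. For the latter I would note that $\mu_{\scriptscriptstyle{N^d,\beta,\varrho}}(\Omega^{+\scriptscriptstyle{\text{,wet}}}_{\scriptscriptstyle{d,N,B}})>0$ because $\varrho>0$ and $\lambda^{N^d,\beta}_{\scriptscriptstyle{B}}$ charges every stratum, while $\mu_{\scriptscriptstyle{N^d,\beta,\varrho}}(\{\phi(x)=0\})>0$ follows from Remark~\ref{remcondapp}, since finiteness of $V(0)$ forces flat (hence dry) interface pieces to occur with positive probability, and these are contained in $\{\phi(x)=0\}$.

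For the monotonicity in $\beta$ I would transport the rational-function argument from the proof of Corollary~\ref{names}, writing
\[
\mu_{\scriptscriptstyle{N^d,\beta,\varrho}}(\{\phi(x)=0\})=1-\frac{\mu_{\scriptscriptstyle{N^d,\beta,\varrho}}(\{\phi(x)>0\})}{\mu_{\scriptscriptstyle{N^d,\beta,\varrho}}(\overline{\Omega^+_{\scriptscriptstyle{d,N}}})}=1-\frac{\sum_{i=0}^{N^d}\beta^{i}a_{i}}{\sum_{i=0}^{N^d}\beta^{i}b_{i}},
\]
with $a_i,b_i$ defined as in that proof, now with $x$ in the role of $j$. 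Here lies the only genuine subtlety --- and hence the step I expect to require the most care --- namely that in the present model $\varrho$ itself depends on $\beta$ through $Z^{V,\beta}_{\scriptscriptstyle{d,N}}$, so Corollary~\ref{names} cannot be quoted verbatim. The point is that this dependence cancels: numerator and denominator both carry the common factor $(Z^{V,\beta}_{\scriptscriptstyle{d,N}})^{-1}$, leaving integrals of the $\beta$-independent weight $\exp(-H^{V}_{\scriptscriptstyle{d,N}})$, so that the coefficients $a_i,b_i$ are genuinely free of $\beta$ and satisfy $0=a_{N^d}<b_{N^d}$, $0<a_i<b_i$ for $1\le i\le N^d-1$, and $0<a_0=b_0$. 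The resulting rational function is then strictly decreasing in $\beta$ with value $1$ at $\beta\downarrow 0$ and $0$ at $\beta\uparrow\infty$; its complement is therefore increasing and tends to $0$ and $1$ respectively, which is the asserted behaviour of the right-hand side of (\ref{jzeroapp}).
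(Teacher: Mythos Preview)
Your proposal is correct and follows exactly the paper's route: invoke Remark~\ref{remcondapp} to verify the abstract hypotheses, note that $\varrho>0$ pointwise, and then apply Corollary~\ref{names}. The one point you make explicit that the paper's one-line proof glosses over---that the $\beta$-dependence of $\varrho$ enters only through the normalising constant $Z^{V,\beta}_{\scriptscriptstyle{d,N}}$ and hence cancels in the ratio, leaving the coefficients $a_i,b_i$ genuinely $\beta$-free---is a real observation and your treatment of it is correct.
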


\begin{proof}
Use Remark \ref{remcondapp} and apply Corollary \ref{names}.
\end{proof}

\begin{remark}
Corollary \ref{namesapp} justifies that \replaced{$\beta$}{$s$} is called \emph{strength of pinning}.
\end{remark}

\subsection*{Acknowledgment}
We thank Benedikt Heinrich, Tobias Kuna, Michael R\"ockner and Heinrich von Weizs\"acker for discussions and helpful comments. \added{Moreover, we thank an anonymous referee for helpful comments improving the readability of the paper.} Financial support through the DFG project GR 1809/8-1  is
gratefully acknowledged.


\begin{thebibliography}{BDG01}

\bibitem[AR95]{AR95}
S.~Albeverio and M.~R{\"o}ckner.
\newblock Dirichlet form methods for uniqueness of martingale problems and
  applications.
\newblock In {\em Stochastic analysis ({I}thaca, {NY}, 1993)}, volume~57 of
  {\em Proc. Sympos. Pure Math.}, pages 513--528. Amer. Math. Soc., Providence,
  RI, 1995.


\bibitem[BDG01]{BDG01}
E.~Bolthausen, J.-D. Deuschel, and G.~Giacomin.
\newblock Entropic repulsion and the maximum of the two-dimensional harmonic
  crystal.
\newblock {\em Ann. Probab.}, 29(4):1670--1692, 2001.

\bibitem[CV00]{CaVe00}
P.~Caputo and Y.~Velenik.
\newblock A note on wetting transition for gradient fields.
\newblock {\em Stochastic Process. Appl.}, 87(1):107--113, 2000.

\bibitem[CF11]{ChFu11}
Z.~Chen and M.~Fukushima.
\newblock {\em Symmetric Markov Processes, Time Change, and Boundary Theory}, volume~35 of
	 {\em London Mathematical Society Monographs}.
\newblock Princeton University Press, Princeton and Oxford, 2011.

\bibitem[DG00]{DeuGia00}
J.-D. Deuschel and G.~Giacomin.
\newblock Entropic repulsion for massless fields.
\newblock {\em Stochastic Process. Appl.}, 89(2):333--354, 2000.

\bibitem[DGZ05]{DGZ05}
J.-D. Deuschel, G.~Giacomin, and L.~Zambotti.
\newblock Scaling limits of equilibrium wetting models in {$(1+1)$}-dimension.
\newblock {\em Probab. Theory Related Fields}, 132(4):471--500, 2005.

\bibitem[DN07]{DeuNis07}
J.-D. Deuschel and T.~Nishikawa.
\newblock The dynamic of entropic repulsion.
\newblock {\em Stochastic Process. Appl.}, 117(5):575--595, 2007.


\bibitem[EP12]{EP12}
H.~-J.~Engelbert and G.~Peskir.
\newblock Stochastic differential equations for sticky Brownian motion.
\newblock In {\em Research report No.~5, 2012, Probability and Statistics Group}, pages 28 pp., School of Mathematics,
  The University of Manchester, 2012.

\bibitem[FO01]{FuOl01}
T.~Funaki and S.~Olla.
\newblock Fluctuations for {$\nabla\phi$} interface model on a wall.
\newblock {\em Stochastic Process. Appl.}, 94(1):1--27, 2001.

\bibitem[F{O}T11]{FOT94}
M.~Fukushima, Y.~{O}shima, and M.~Takeda.
\newblock {\em Dirichlet forms and symmetric {M}arkov processes}, volume~19 of
  {\em de Gruyter Studies in Mathematics}.
\newblock Walter de Gruyter \& Co., Berlin, 2011.

\bibitem[FS97]{FuSpo97}
T.~Funaki and H.~Spohn.
\newblock Motion by mean curvature from the {G}inzburg-{L}andau {$\nabla \phi$}
  interface model.
\newblock {\em Comm. Math. Phys.}, 185(1):1--36, 1997.

\bibitem[Fun03]{Fu03}
T.~Funaki.
\newblock Hydrodynamic limit for {$\nabla\phi$} interface model on a wall.
\newblock {\em Probab. Theory Related Fields}, 126(2):155--183, 2003.

\bibitem[Fun05]{Fu05}
T.~Funaki.
\newblock Stochastic interface models.
\newblock In {\em Lectures on probability theory and statistics}, volume 1869
  of {\em Lecture Notes in Math.}, pages 103--274. Springer, Berlin, 2005.

\bibitem[Gia02]{Ga02}
G.~Giacomin.
\newblock Limit theorems for random interface models of {G}inzburg-{L}andau
  {$\nabla\phi$} type.
\newblock In {\em Stochastic partial differential equations and applications
  ({T}rento, 2002)}, volume 227 of {\em Lecture Notes in Pure and Appl. Math.},
  pages 235--253. Dekker, New York, 2002.
  
\bibitem[Gra88]{Gra88}
C.~Graham.
\newblock The martingale problem with sticky reflection conditions, and a system of particles interacting at the boundary.
\newblock {\em Ann. Inst. Henri Poincar{\'e}}, 24(1):45--72, 1988.

\bibitem[GKLR03]{GKLR01}
M.~Grothaus, Yu.~G.~Kondratiev, E.~Lytvynov and M.~R{\"o}ckner.
\newblock Scaling limit of stochastic dynamics in classical continuous systems.
\newblock {\em Ann. Probab.}, 31(3):1494--1532, 2003.

\bibitem[IW89]{WaIk89}
N.~Ikeda and Sh. Watanabe.
\newblock {\em Stochastic differential equations and diffusion processes},
  volume~24 of {\em North-Holland Mathematical Library}.
\newblock North-Holland Publishing Co., Amsterdam, second edition, 1989.

\bibitem[Ka01]{Ka02}
O.~Kallenberg.
\newblock {\em Foundations of modern probability - 2nd edition}.
\newblock Springer-Verlag, New York, 2001.

\bibitem[MR92]{MR92}
Z.~M. Ma and M.~R{\"o}ckner.
\newblock {\em Introduction to the theory of (nonsymmetric) {D}irichlet forms}.
\newblock Universitext. Springer-Verlag, Berlin, 1992.

\bibitem[RY91]{RY91}
D.~Revuz and M.~Yor.
\newblock {\em Continuous Martingales and Brownian Motion}, volume~293 of {\em Grundlagen der mathematischen Wissenschaften}.
\newblock Springer-Verlag, Berlin, 1991.

\bibitem[Sim63]{Sim63}
G.~F. Simmons.
\newblock {\em Introduction to topology and modern analysis}.
\newblock McGraw-Hill Book Co., Inc., New York, 1963.

\bibitem[Stu95]{Stu95}
K.~T. Sturm.
\newblock Sharp estimates for capacities and applications to symmetric diffusions.
\newblock {\em Probab. Theory Related Fields}, 103(1):73--89, 1995.

\bibitem[Stu94]{Stu94}
K.~T. Sturm.
\newblock Analysis on local Dirichlet spaces I. Recurrence, conservativeness and $L^p$-Liouville properties.
\newblock {\em J. reine angew. Math.}, 456:173--196, 1994

\bibitem[Tru03]{Tru03}
G.~Trutnau.
\newblock Skorokhod decomposition of reflected diffusions on bounded
  {L}ipschitz domains with singular non-reflection part.
\newblock {\em Probab. Theory Related Fields}, 127(4):455--495, 2003.

\bibitem[VV03]{VoVo03}
H.~Vogt and J.~Voigt.
\newblock Wentzell boundary conditions in the context of {D}irichlet forms.
\newblock {\em Adv. Differential Equations}, 8(7):821--842, 2003.

\bibitem[Zam04]{Za04}
L.~Zambotti.
\newblock Fluctuations for a {$\nabla\phi$} interface model with repulsion from
  a wall.
\newblock {\em Probab. Theory Related Fields}, 129(3):315--339, 2004.
\end{thebibliography}
\end{document}